\newcommand{\Subsection}[1]{\subsection{ #1} ${}^{}$}
\newcommand{\one}{\mathds{1}}
\newcommand{\cU}{{\mathcal U}}
\newcommand{\D}{{\mathcal D}}
\newcommand{\R}{{\mathbb R}}
\newcommand{\C}{{\mathbb C}}
\newcommand{\N}{{\mathbb N}}
\newcommand{\rank}{\operatorname{rank}}
\newcommand{\sign}{\operatorname{sign}}
\newcommand{\diag}{\operatorname{diag}}
\newcommand{\half}{\frac{1}{2}}
\newcommand{\mult}{\operatorname{mult}}
\newcommand{\res}{\operatorname{Res}}
\newcommand{\im}{\operatorname{Im}}
\newcommand{\dist}{\operatorname{dist}}
\newcommand{\tr}{{\operatorname{tr}}}
\newcommand{\slim}{\operatornamewithlimits{s--lim}}
\newcommand{\re}{\operatorname{Re}}
\renewcommand{\im}{\operatorname{Im}}
\theoremstyle{plain}
\newtheorem{theorem}{Theorem}
\newtheorem{corollary}[theorem]{Corollary}
\newtheorem{lemma}[theorem]{Lemma}
\newtheorem{proposition}[theorem]{Proposition}
\newtheorem{remark}[theorem]{Remark}
\newtheorem{definition}[theorem]{Definition}
\theoremstyle{definition}
\numberwithin{theorem}{section}
\numberwithin{equation}{section}
\title[Characteristic values and magnetic resonances]{Counting function of characteristic values and magnetic resonances}
\author[J.-F. Bony, V. Bruneau]{Jean-Fran\c{c}ois Bony, Vincent Bruneau}
\address{Institut de Math\'ematiques de Bordeaux, UMR 5251 du CNRS,
Universit\'e de Bordeaux I, 351 cours de la Lib\'eration, 33405 Talence cedex, France}
\email{bony@math.u-bordeaux1.fr, vbruneau@math.u-bordeaux1.fr}
\author[G. Raikov]{Georgi Raikov}
\address{Departamento de Matem\'aticas, Facultad de Matem\'aticas, Pontificia Universidad
Cat\'olica de Chile, Vicu\~na Mackenna 4860, Santiago de Chile}
\email{graikov@mat.puc.cl}
\keywords{Holomorphic operator functions, characteristic values,
counting functions, magnetic Schr\"{o}dinger operator, resonances}
\subjclass[2000]{35B34, 35P25, 35J10, 47F05, 81Q10}
\def\phi {\varphi}
\newcommand{\dom}{{\partial\Omega}}
\newcommand{\cO}{{\mathcal O}}
\newcommand{\cS}{{\mathcal S}}
\newcommand{\cH}{{\mathcal H}}
\newcommand{\cZ}{{\mathcal Z}}
\newcommand{\cN}{{\mathcal N}}
\newcommand{\cK}{{\mathcal K}}
\newcommand{\s}{{\mathcal C}}
\newcommand{\ind}{\operatorname{ind}}
\newcommand{\Ind}{\operatorname{Ind}}
\newcommand{\xp}{X_\perp}
\newcommand{\mper}{m_\perp}
\def\<{\langle}
\def\>{\rangle}
\begin{document}

\begin{abstract}
We consider the meromorphic operator-valued function $I - K(z) = I
- A(z)/z$ where $A$ is holomorphic on the domain ${\mathcal D}
\subset {\mathbb C}$, and has values in the class of compact
operators acting in a given Hilbert space. Under the assumption
that $A(0)$ is  a selfadjoint operator which can be of infinite
rank, we study the distribution near the origin of the
characteristic values of $I - K(z)$, i.e. the complex numbers $w
\neq 0$ for which the operator $I- K(w)$ is not invertible, and
 we show that generically the characteristic values of $I-K$
converge to $0$ with  the same rate as the eigenvalues of
$A(0)$.

We apply our abstract results to the investigation of the
resonances of the operator $H = H_0 + V$ where $H_0$ is the
shifted 3D Schr\"odinger operator with constant magnetic field of
scalar intensity $b>0$, and $V: \R^3 \longrightarrow \R$ is the
electric potential which admits a suitable decay at infinity. It
is well known that the spectrum $\sigma(H_0)$ is purely absolutely
continuous, coincides with $[ 0 , + \infty[$, and the so-called
Landau levels $2bq$ with integer $q \geq 0$, play the role of
thresholds in $\sigma(H_0)$. We study the asymptotic distribution
of the resonances near any given Landau level, and under generic
assumptions obtain the main asymptotic term of the corresponding
resonance counting function, written explicitly in the terms of
 appropriate Toeplitz operators.
\end{abstract}

\maketitle

\section{Introduction}
It is well known that several spectral problems for unbounded
operators can be reduced to the study of a compact-operator-valued function
$K(z)$. Generally, a complex number $z$ in a domain $\D$ is an eigenvalue (or a
resonance) of an operator $H$ if and only if  $I - K(z)$ is not
invertible where $z \longmapsto K(z)$ is holomorphic on  $\D$ with
value in  $\cS_\infty$, the space of compact operators. For example, under suitable assumptions, according to the
Birman--Schwinger principle, the study of the eigenvalues of
$H=H_0+M^*M$ can be related to the compact operator $K(z)= -
M(H_0-z)^{-1}M^*$ (see \cite{Bi61_01}, \cite{BiSo91_01},
\cite{GeHo87_01}, \cite{Kl82_01}, \cite{KlSi80_01},
\cite{Ne83_01}, \cite{Ra80_01}, \cite{Sc61_01}, \cite{Se73_01},
\cite{Si77_01}). For a more general Birman--Schwinger principle
for non-selfajoint operators we refer to \cite{GeLaMiZi05_01}.
Similarly,  the resonances for $H=H_0+V$, a perturbation of a free
Hamiltonian $H_0$, can be analyzed by studying the invertibility
of $I-K(z)$ with $K(z)$ a  compact operator (see \cite{DiZe03_01},
\cite{Fr97_01}, \cite{Fr98_01}, \cite{Gu05_01}, \cite{PeZw01_01},
\cite{SaZw95_01}, \cite{Si00_01}, \cite{Sj97_01}, \cite{Sj01_01},
\cite{Zw89_01}). For example, for perturbations of Schr\"odinger
operators $H_0$ by exponentially decreasing potentials $V$, thanks
to a resolvent equation like \eqref{c7}, we can choose
 $K(z) = - \sign ( V ) \vert V \vert^{\frac{1}{2}} ( H_0 - z )^{-1} \vert V \vert^{\frac{1}{2}}$.
In other situations, $K(z)$ is  constructed by more sophisticated
methods, like Grushin problems or by a representation formula of
the scattering matrix.

In what follows, as in \cite{GoSi71_01}, for $z \longmapsto K(z)$
holomorphic on  $\D$ with  values in  $\cS_\infty$, we will
say that a complex number $w$ is a {\em characteristic value} of
$I - K( \cdot )$ if $I-K(w)$ is not invertible. According to the
analytic Fredholm theorem, if for some $z_0\in \D$ the
operator $I-K(z_0)$ is invertible, then $I-K( \cdot )$ has a discrete set of characteristic values in
$\D$. However, these characteristic values could accumulate at
some point of the boundary $\partial \D$. For example, if $A_0$ is
a selfadjoint compact operator of infinite rank, then the
characteristic values of $I-A_0/z$ in $\C \setminus \{0\}$ are the
eigenvalues of $A_0$ which accumulate at $0$. If $z
\longmapsto K(z)$ is  holomorphic on a domain $\D$, then the
number of characteristic values of $I-K$ in each compact subset of
$\D$ is finite. This property still holds true if $z \longmapsto
K(z)$ is finite meromorphic on $\D$ (see Section \ref{a19},
Proposition \ref{c21} or \cite[Proposition 4.1.4]{GoLe09_01}). For
example, we meet this case within the context of the investigation
of the resonances for the 1D Schr\"odinger operator (see
\cite{Fr97_01}).

In this paper, we consider the case where
\begin{equation} \label{gdr2}
I - K(z) = I - \frac{A (z)}{z},
\end{equation}
with $z \longmapsto A(z):\D \longrightarrow \cS_{\infty}$
holomorphic on a domain  $\D \subset \C$ containing $0$, and
$A(0)$ selfadjoint. As mentioned above, $K(z)$ has typically the
structure of a sandwiched resolvent of the unperturbed operator
which explains its form defined in \eqref{gdr2} where the factor
$1/z$ models, after an appropriate change of the variables, the
threshold singularity of $(H_0-z)^{-1}$.  An important specific
feature of the operators we consider, is  the fact that $A (0)$
can be of infinite rank. Many new phenomena described in the
present article are due to this property. One encounters a similar
situation when one studies the scattering poles on asymptotically hyperbolic manifolds (see \cite{Gu05_01}), or
the resonances of the magnetic Schr\"odinger operator in $\R^3$
(see \cite{BoBrRa07_01}). This type of problem could also arise
for the investigation of resonances near thresholds for other
magnetic Hamiltonians like those of \cite{AsBrBrFeRa08_01},
\cite{Kh09_01}, \cite{Ra10_01}, \cite{Ti11_01}.

First, we consider the asymptotic distribution near the origin of
the characteristic values of $I-A(z)/z$. The natural intuition is
that these characteristic values  accumulate at $0$ with
the same rate as the spectrum of $A(0)$,  but since only $A (0)$ is assumed to be selfadjoint, some pseudospectral phenomena could perturb this conjecture. We describe situations where this intuition is really valid (see Section \ref{s2}). Then we apply
our abstract results to the study of the distribution of
resonances near the spectral thresholds for the shifted 3D
Schr\"odinger operators with constant magnetic field of strength
$b>0$, pointing at the $x_3$-direction:
\begin{equation} \label{gdr0}
H(b, V): = \Big( D_{1} + \frac{b}{2} x_2 \Big)^2 + \Big( D_{2} - \frac{b}{2} x_1 \Big)^2 - b + D_{3}^2 + V , \qquad D_j := -i \frac{\partial}{\partial x_j} .
\end{equation}
We regard this operator as one of the main sources
of motivation for the article, and hence we would like to discuss
it in more detail. Set $\xp = (x_1, x_2) \in \R^2$. Using the
representation $L^2(\R^3) = L^2(\R_{\xp}^2) \otimes
L^2(\R_{x_3})$, we find that
\begin{equation} \label{gdr10}
H_0 : = H(b,0) = H_{\rm Landau} \otimes I_3 + I_{\perp} \otimes
\Big( - \frac{\partial^2}{\partial x_{3}^2} \Big)
\end{equation}
where
\begin{equation} \label{gdr4}
 H_{\text{Landau}}: = \Big( D_1 + \frac{b}{2} x_2 \Big)^2 + \Big( D_2 - \frac{b}{2} x_1 \Big)^2 -b,
\end{equation}
is the shifted Landau Hamiltonian, selfadjoint in $L^2(\R^2)$,
and $I_3$ and $I_\perp$ are the identity operators in
$L^2(\R_{x_3})$ and $L^2(\R_{\xp}^2)$ respectively. It is well
known that the spectrum of $H_{\text{Landau}}$ consists of the
so-called Landau levels $2bq$,  $q \in \N : = \{0,1,2, \ldots\}$,
and ${\rm dim}\,{\rm Ker}(H_{\text{Landau}} - 2bq) = \infty$. Consequently,
\begin{equation*}
\sigma ( H_0 ) = \sigma_{\rm ac} ( H_0 ) = [ 0 , + \infty [ ,
\end{equation*}
and we conclude that the Landau levels play the role of thresholds
in the spectrum of $H_0$. Since the ``transversal'' operator
$H_{\text{Landau}}$ in \eqref{gdr10} has a purely point spectrum,
and its eigenvalues form a discrete subset of $\R$ while the
spectrum of the ``longitudinal'' operator $-
\frac{\partial^2}{\partial x_3^2}$ is purely absolutely
continuous, the structure of $H_0$ is quite close to the one of
the (unperturbed) quantum waveguide Hamiltonians. The study of the
resonances for perturbations of such quantum waveguides and their
generalizations has a rich history (see e.g. \cite{AsPaVa00_01}, \cite{Ch04_01}, \cite{Ed02_01},
\cite{WuZw00_01}). The novelty of
the results obtained in the present article as well as in its
predecessor \cite{BoBrRa07_01} is related to the fact that in the
case of the operator $H(b,0)$ the spectral thresholds (i.e. the
eigenvalues of the transversal operator $H_{\text{Landau}}$ in
\eqref{gdr10}) are of infinite multiplicity; this corresponds to
the fact that $\rank A(0) = \infty$ in the case of the
operator in \eqref{gdr2} (see below \eqref{c10} for the explicit
expression of the operator $A$ arising in the study of the
resonances accumulating at the $q$th Landau level).

Assume now that the multiplier by the electric potential $V : \R^3
\longrightarrow \R$ is relatively compact with respect to $H_0$. It is known
that if $V$ satisfies the estimate
\begin{equation} \label{gdr1}
V({\bf x}) \leq - C \one_{U}({\bf x}), \qquad {\bf x} \in \R^3,
\end{equation}
where $C>0$ and  $U \subset \R^3$ is an open non empty set, the operator $H(b, V)$ has an
infinite negative discrete spectrum (see e.g. \cite[Theorem
1.5]{AvHeSi78_01}). Next, if $V$ is axisymmetric, i.e. depends
only on $|\xp|$ and $x_3$, and satisfies \eqref{gdr1}, then below
each Landau level $2bq$, $q \in \N$, the operator $H(b, V)$ has at
least one eigenvalue which for all sufficiently large $q$ is
embedded in the essential spectrum (see  \cite[Theorem
1.5]{AvHeSi78_01}). Finally, if $V$ is axisymmetric and satisfies
\begin{equation} \label{y1}
V({\bf x}) \leq - C \one_{W}(\xp) (1+|x_3|)^{-m_3}, \qquad {\bf x}
= (\xp, x_3) \in \R^3,
\end{equation}
where $C>0$, $m_3 \in (0,2)$ and  $W \subset \R^2$ is an open non empty set, then there
exists an infinite series of eigenvalues of $H(b, V)$ below each
Landau levels $2bq$ (see \cite{Ra05_01}, \cite{Ra06_01}). Further, in \cite{FeRa04_01} it was supposed that $V$ is continuous, has a definite sign, does not vanish identically, and satisfies
\begin{equation} \label{t10}
V({\bf x}) = {\mathcal O} \big( (1 + \vert {\xp}
\vert )^{-\mper} (1+|x_3|)^{-m_3} \big) , \qquad {\bf x}= (\xp,x_3) \in \R^3,
\end{equation}
and it was shown that the Krein spectral shift function associated
with the operator pair $(H(b, V),H(b,0))$ has singularities
at the Landau levels. All these properties suggest that for
generic $V$ there could be an accumulation of resonances of $H(b,
V)$ at the Landau levels.

In the present article we assume that  $V$ is Lebesgue measurable,
and satisfies
\begin{equation}  \label{s21}
V({\bf x}) = {\mathcal O} \big(  (1 + \vert {\xp} \vert )^{-\mper}
\exp(-N \vert x_3 \vert ) \big) , \qquad {\bf x}= (\xp,x_3) \in
\R^3,
\end{equation}
with  $\mper >0$,  $N>0$.  In \cite{BoBrRa07_01},  we defined the
resonances of $H(b,V)$ under this assumption, and gave an upper
bound on their number at a distance $r\searrow 0$ of the Landau
levels. Moreover, for $H ( b , e V )$ with $e$ sufficiently small
and $V$ of definite sign, compactly supported (or decreasing like
a Gaussian function), we stated a lower bound. Here, in Section
\ref{s5}, we obtain the asymptotic
 behavior of the counting function of the magnetic resonances near the
 Landau levels for $V$ of definite sign satisfying the estimate \eqref{s21} and
 for every $e\in \R\setminus {\mathcal E}$ where ${\mathcal E}$ is a
 discrete set of $\R$.
To our best knowledge, it is the first result giving such
asymptotic behaviour for counting functions of magnetic resonances
(and maybe, more generally for resonances which accumulate
at a spectral threshold).

Apart from the applications in the mathematical theory of
resonances for quantum Hamiltonians, we hope that our research
could turn out to be useful for the better understanding of the so
called magnetic Feshbach resonances which play an important role
in the modern theoretical physics, in particular the theory of
Bose--Einstein condensates (see e.g. \cite{BaLeVoReDu09_01},
\cite{PaKrCoSa05_01}, \cite{SuKr11_01}).

The article is organized as follows. In Section \ref{a19} we
introduce the notions of characteristic values and  index of an
operator with respect to a contour, following mainly
\cite{GoLe09_01} and \cite{GoSi71_01}. These tools are used
throughout the article; note in particular that they avoid the use
of the regularized determinants. Further, we  study the asymptotic
distribution as $r \searrow 0$ of the characteristic values in a
domain of size $r$, situated  at a distance $r$ from the origin
(see Theorem \ref{a17}). Then, we prove a general result
concerning the asymptotics as $r \searrow 0$ of the characteristic
values in  a domain of size $1$ situated at a distance $r$ from
the origin  (see Theorem \ref{a17b}, Corollary \ref{a17c} and
Corollary \ref{a17d}). These abstract results are stated in
Section \ref{s2}, and are proved respectively  in Section \ref{s3}
and in Section \ref{s4}. In Section \ref{s5}, we apply our
abstract results to magnetic Schr\"odinger operators.
 Eventually, in Section \ref{s7}, we construct  some counterexamples
which show that the assumptions of the results of Section \ref{s2}
can not be removed.

\section{Characteristic values of holomorphic operators} \label{a19}

In this section, we define the notions of characteristic values of
an operator valued holomorphic function and their multiplicities.
For more details, we refer to \cite{GoSi71_01} and to Section 4 of
\cite{GoLe09_01}.

For the formulation of our results we need the following notations
used throughout the article. Let $\cH$ be a separable Hilbert
space. We denote by ${\mathcal L}(\cH)$ (resp. $\cS_\infty(\cH)$)
the class of linear bounded (resp. compact) operators acting in
$\cH$. By ${\mathcal G}{\mathcal L}(\cH)$, we denote the class of
invertible bounded operators, and by $\cS_p(\cH)$, $p \in [ 1 , +
\infty [$, the Schatten--von Neumann classes of compact operators.
In particular $\cS_1$ is the trace class, and $\cS_2$ is the
Hilbert--Schmidt class. When appropriate, we omit the explicit
indication of the Hilbert space $\cH$ where the operators
from a given class act.

\begin{definition}\sl \label{sl}
For $w \in \C$, let ${\cU}$ be a neighborhood of $w$, and let $F :
\cU\setminus \{ w\} \longrightarrow {\mathcal L}(\cH)$ be a
holomorphic function. We say that $F$ is finite meromorphic at $w$
if the Laurent expansion of $F$ at $w$ has the form
\begin{equation*}
F(z)= \sum_{n=m}^{+ \infty} (z-w)^n A_n, \qquad m > -\infty,
\end{equation*}
the operators $A_m, \cdots , A_{-1}$ being of finite rank, if $m<0$.

If, in addition, $A_0$ is a Fredholm operator, then $F$ is called
Fredholm at $w$, and the  Fredholm index of $A_0$ is called the
Fredholm index of $F$ at $w$.
\end{definition}

\begin{remark}\sl \label{grr1}
Definition \ref{sl}, as well as most of the results of the present
section, admits a generalization to a Banach-space setting. We
formulate these results in a form sufficient for our purposes.
\end{remark}

\begin{proposition}[{\cite[Proposition 4.1.4]{GoLe09_01}}]\sl \label{c21}
Let $\D \subset \C$ be a connected open set, let $Z\subset \D$ be
a discrete and closed subset of $\D$, and let $F : \D
\longrightarrow {\mathcal L} (\cH)$ be a holomorphic function on
$\D\setminus Z$. Assume that:
\begin{itemize}
 \item $F$ is finite meromorphic on $\D$, i.e. it is finite meromorphic in a vicinity of each point of $Z$;
 \item $F$ is Fredholm at each point of $\D$;
 \item there exists $z_0 \in \D\setminus Z$ such that $F(z_0)$ is invertible.
 \end{itemize}
Then there exists a discrete and closed subset $Z^\prime$ of $\D$ such that:
\begin{itemize}
\item $Z \subset Z^\prime$;
\item $F(z)$ is invertible for $z \in \D\setminus Z^\prime$;
\item $F^{-1} : \D\setminus Z^\prime \longrightarrow {\mathcal G}{\mathcal L}(\cH)$ is finite meromorphic and Fredholm at each point of $\D$.
\end{itemize}
\end{proposition}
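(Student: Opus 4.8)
The plan is to reduce everything to a local analysis near each point $w\in\D$ and then to glue the local conclusions by connectedness. As a preliminary I would note that the Fredholm index of $F$ (in the sense of Definition~\ref{sl}) is locally constant: where $F$ is holomorphic this is the continuity of the Fredholm index, while at $w\in Z$ one writes the Laurent expansion $F=\tilde F+R$ with $\tilde F$ holomorphic near $w$, $\tilde F(w)=A_0$, and $R(z)=\sum_{n=m}^{-1}(z-w)^nA_n$ of finite rank for $z\ne w$; since a finite-rank perturbation does not change the index, $\ind F(z)=\ind\tilde F(z)=\ind A_0$ for $z$ near $w$. Connectedness of $\D$ together with invertibility of $F(z_0)$ then forces $\ind F\equiv 0$ on $\D$.

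Next, I would fix $w\in\D$ and carry out a finite-dimensional reduction near $w$. Writing again $F=\tilde F+R$ near $w$ (with $R\equiv0$ when $w\notin Z$), the operator $\tilde F(w)=A_0$ is Fredholm of index $0$, so there is a finite-rank operator $R_0$ with $\tilde F(w)+R_0$ invertible; hence $\tilde F(z)+R_0\in{\mathcal G}{\mathcal L}(\cH)$ on a neighbourhood $\cU$ of $w$, and
\[
F(z)=(\tilde F(z)+R_0)\bigl(I-S(z)\bigr),\qquad S(z):=(\tilde F(z)+R_0)^{-1}\bigl(R_0-R(z)\bigr).
\]
Since $R_0$ and the $A_n$ ($m\le n\le -1$) have finite rank, $S$ admits an explicit representation $S(z)=\sum_{i=1}^{K}\langle\cdot,g_i\rangle\,h_i(z)$ with $K$ fixed, $g_i\in\cH$ constant and $h_i$ finite meromorphic at $w$ (holomorphic on $\cU\setminus\{w\}$). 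Thus, for $z\in\cU\setminus\{w\}$, the operator $I-S(z)$ — and with it $F(z)$ — is invertible if and only if the finite-meromorphic scalar $d_w(z):=\det\bigl(\delta_{ij}-\langle h_j(z),g_i\rangle\bigr)_{1\le i,j\le K}$ does not vanish; moreover, wherever $d_w(z)\ne0$, Cramer's rule gives $(I-S(z))^{-1}=I+\tilde S(z)$ with $\tilde S$ of rank $\le K$ and entries equal to holomorphic functions divided by $d_w$. Consequently, \emph{provided} $d_w\not\equiv0$ near $w$, the set of $z$ at which $F$ is not invertible is contained in a discrete closed subset of $\cU$, and off it $F(z)^{-1}=(I-S(z))^{-1}(\tilde F(z)+R_0)^{-1}$ is finite meromorphic at $w$ and Fredholm of index $0$ at every point near $w$ (being $I+\tilde S(z)$ composed with an invertible operator).

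It remains to rule out the alternative $d_w\equiv 0$. Let $G_2$ be the set of $w\in\D$ near which $F$ is invertible on no punctured neighbourhood, and $G_1:=\D\setminus G_2$; the factorization above (which is unconditional) shows $w\in G_1\iff d_w\not\equiv0$ near $w$, and that both $G_1$ and $G_2$ are open. They are disjoint and $z_0\in G_1$, so connectedness of $\D$ gives $\D=G_1$; hence the reduction applies at every point. Setting
\[
Z^\prime:=Z\cup\{\, w\in\D\setminus Z : F(w)\notin{\mathcal G}{\mathcal L}(\cH) \,\},
\]
the local description then shows that $Z^\prime$ is discrete and closed in $\D$, that $Z\subset Z^\prime$, that $F(z)$ is invertible for $z\in\D\setminus Z^\prime$, and that $F^{-1}$ extends from $\D\setminus Z^\prime$ to a function which is finite meromorphic and Fredholm at each point of $\D$ — which is the claim.

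I expect the second step to be the main obstacle: the factorization of a \emph{finite-meromorphic} Fredholm function through an invertible factor times $I$ plus a finite-rank term, and the bookkeeping that $(I-S(z))^{-1}$ is again finite meromorphic with finite-rank principal parts and Fredholm constant term. One may shortcut this by invoking the Gohberg--Sigal local equivalence theorem (see \cite{GoSi71_01}), which yields near $w$ a normal form $F(z)=E_1(z)\bigl(P_0+\sum_j(z-w)^{\kappa_j}P_j\bigr)E_2(z)$ with $E_1,E_2$ holomorphic and invertible, the $P_j$ mutually orthogonal projections summing to $I$, $\sum_{j\ge1}P_j$ of finite rank, and $\kappa_j\in\Z\setminus\{0\}$, from which the finite-meromorphic Fredholm structure of $F^{-1}$ is immediate; the connectedness argument is then needed only to ensure that this normal form — which presupposes $F$ invertible on a punctured neighbourhood of $w$ — is available at every point.
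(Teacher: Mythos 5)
The paper quotes this result directly from Gohberg--Leiterer (Proposition 4.1.4) and gives no proof, so there is no in-paper argument to compare with. Your blind proof is correct and follows the standard route: local constancy of the Fredholm index plus connectedness and the invertibility of $F(z_0)$ force index zero everywhere; index zero permits a finite-rank corrector $R_0$ with $\tilde F(w)+R_0$ invertible near $w$, giving the factorization $F=(\tilde F+R_0)(I-S)$ with $S$ of uniformly bounded finite rank; the Fredholm determinant of $I-S$ collapses to the $K\times K$ scalar $d_w$, which is finite meromorphic, so its zero set near $w$ is discrete unless $d_w\equiv 0$; the open dichotomy $\D=G_1\sqcup G_2$ with $z_0\in G_1$ rules out $d_w\equiv0$ at every point; and Cramer's rule supplies the finite-meromorphic Fredholm structure of $F^{-1}$. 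Two harmless imprecisions are worth tightening: the cofactor entries of $(I-S(z))^{-1}$ are quotients of finite-meromorphic, not holomorphic, functions by $d_w$, since the $h_i$ themselves may have poles at $w$; and to see that the principal part of $\tilde S$ (and hence of $F^{-1}$) has finite-rank coefficients one should record that a norm limit of operators of rank at most $K$ again has rank at most $K$, so the Laurent coefficients inherit the rank bound. Neither affects validity. Your closing remark --- that the Gohberg--Sigal local normal form would shorten the bookkeeping but presupposes invertibility on a punctured neighbourhood, for which the connectedness step is still needed --- is exactly the right caveat.
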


Then we can define the {\em characteristic values}
of $F$, and their multiplicities.

\begin{definition}\sl \label{c16}
In the setting of Proposition \ref{c21}, each point of
$Z^\prime$, where $F$ or $F^{-1}$ is not holomorphic, is called a
characteristic value of $F$. The multiplicity of a characteristic
value $w_0$ is defined by
\begin{equation}\label{c17}
\mult (w_0):= \frac{1}{2 i \pi} \tr \int_{\vert w - w_0\vert=\rho} F^{\prime} (z) F (z)^{-1} d z,
\end{equation}
where $\rho>0$ is sufficiently small such that $\{ w ; \ \vert w - w_0\vert
\leq \rho \} \cap Z^\prime = \{ w_0 \}$.
\end{definition}

By definition, if $F$ is holomorphic in $\D$, a characteristic
value of $F$ is a complex number $w$ for which $F(w)$ is not
invertible. Then, according to results of \cite{GoSi71_01}
and \cite[Section 4]{GoLe09_01}, $\mult (w)$ is an integer.
Moreover, the definition of the multiplicity coincides with a
definition of the order of $w$ as a zero of $F$ (see
\cite{GoSi71_01} for more details).

If $\Omega \subset \D$ is a connected domain such that $\partial
\Omega \cap Z^\prime = \emptyset$, then the sum of the
multiplicities of the characteristic values of $F$ inside $\Omega$
is the so-called {\em index of $F$ with respect to the contour
$\partial \Omega$}, given by
\begin{equation}\label{c20}
\Ind_{\partial \Omega} F: = \frac{1}{2 i \pi} \tr \int_{\partial \Omega} F^{\prime} (z) F (z)^{-1} d z = \frac{1}{2 i \pi} \tr \int_{\partial \Omega} F (z)^{-1} F^{\prime} (z) \, d z .
\end{equation}

We easily check  that
\begin{equation} \label{gr4}
\Ind_{\partial \Omega}( F_1F_2)= \Ind_{\partial \Omega} F_1 +
\Ind_{\partial \Omega} F_2 ,
\end{equation}
 provided that the
operator-valued functions $F_1$ and $F_2$ satisfy the assumptions of
Proposition \ref{c21}. Let us remark also that if $I-F \in \cS_1$, then $F^{\prime} (z) F (z)^{-1} \in \cS_1$ for $z \in \D\setminus
Z^\prime$, and we have
\begin{equation} \label{gr5}
\Ind_{\partial \Omega} F = \frac{1}{2 i \pi} \int_{\partial \Omega} \tr \big( F^{\prime} (z) F (z)^{-1} \big) \, d z = \ind_{\partial \Omega} f,
\end{equation}
where $f(z) = \det ( F ( z ) )$ is the Fredholm determinant of $F(z)$ and $\ind_{\partial \Omega} f$ is the standard index of a holomorphic function equal to the number of its zeroes in $\Omega$:
\begin{equation} \label{d1}
\ind_{\partial \Omega} f:= \frac{1}{2 i \pi} \int_{\partial \Omega} \frac{f^{\prime} (z)}{ f (z)} \, d z.
\end{equation}

More generally, if $I-F \in \cS_p$ with integer $p \geq 2$, then
the regularized determinant of $F$ is well defined, namely
\begin{equation*}
f_p(z)={\det}_{p}(F(z)):=\det \Big( F(z)
\exp \Big( \sum_{k=1}^{p-1} \frac{1}{k}(I - F (z))^k \Big) \Big) ,
\end{equation*}
(see \cite{Ko84_01}, \cite{Ko85_01}, \cite{Kr62_01}), and if
$\partial \Omega \cap Z^\prime = \emptyset$, we have
\begin{equation}\label{c9}
\ind_{\partial \Omega} f_p= \frac{1}{2 i \pi} \int_{\partial
 \Omega}\tr\Big( F^{\prime} (z) F (z)^{-1} -
\sum_{k=1}^{p-1} F^{\prime} (z)(I-F(z))^{k-1} \Big) \, d z=
\Ind_{\partial \Omega} F,
\end{equation}
 since the sum in the above formula is the derivative of a
function.

Moreover, we have a Rouch\'e-type theorem:

\begin{theorem}[{\cite[Theorem 4.4.3]{GoLe09_01}, \cite[Theorem 2.2]{GoSi71_01}}]\sl \label{a18}
For $\D \subset \C$ a bounded open set with piecewise $C^1$-boundary and $Z \subset \D$ a finite set,
let $F: \overline{\D} \setminus Z \longrightarrow {\mathcal G}{\mathcal L} (\cH)$ be a holomorphic function
which is finite meromorphic and Fredholm at each point of $Z$ and let $G: \overline{\D} \setminus Z \longrightarrow
{\mathcal L} (\cH)$ be a holomorphic function which is finite meromorphic at each point of $Z$, and satisfies
\begin{equation*}
\Vert F (z)^{-1} G(z)\Vert < 1, \qquad z \in \partial \D.
\end{equation*}
Then $F+G$ is finite meromorphic and Fredholm at each point of
$Z$, and
\begin{equation*}
\Ind_{\partial \D} (F+G) = \Ind_{\partial \D} (F).
\end{equation*}
\end{theorem}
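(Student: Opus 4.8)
The plan is to imitate the classical Rouch\'e argument, relying on the multiplicativity property \eqref{gr4} of the index. On $\overline{\D}\setminus Z$, where $F$ takes invertible values, I would factor
\begin{equation*}
F(z)+G(z)=F(z)\bigl(I+F(z)^{-1}G(z)\bigr).
\end{equation*}
On $\partial\D$ the hypothesis $\Vert F(z)^{-1}G(z)\Vert<1$ makes $I+F(z)^{-1}G(z)$ invertible, its inverse being the Neumann series $\sum_{k\ge0}\bigl(-F(z)^{-1}G(z)\bigr)^{k}$, which converges uniformly on $\partial\D$ by compactness; hence $F+G$ is invertible on $\partial\D$ as well. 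Since $z\mapsto\Vert F(z)^{-1}G(z)\Vert$ is continuous, the strict inequality persists on an open neighbourhood of $\partial\D$, which may be taken disjoint from $Z$, so neither $I+F^{-1}G$ nor $F+G$ has a characteristic value on $\partial\D$. Granting that $F+G$ and $I+F^{-1}G$ satisfy the hypotheses of Proposition~\ref{c21} on $\D$, formula \eqref{gr4} then gives
\begin{equation*}
\Ind_{\partial\D}(F+G)=\Ind_{\partial\D}(F)+\Ind_{\partial\D}\bigl(I+F^{-1}G\bigr),
\end{equation*}
so the theorem reduces to proving $\Ind_{\partial\D}\bigl(I+F^{-1}G\bigr)=0$.

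For the admissibility at a point $w\in Z$ (elsewhere it is immediate): since $F$ is finite meromorphic and Fredholm at $w$ and invertible in a punctured neighbourhood of $w$, Proposition~\ref{c21} shows that $F^{-1}$ is finite meromorphic and Fredholm at $w$; as $G$ is finite meromorphic at $w$, the Laurent product $F^{-1}G$ is finite meromorphic at $w$ too, because every coefficient of a negative power of $z-w$ in the product is a finite sum of terms each containing a finite-rank Laurent coefficient. Hence $I+F^{-1}G$, and with it $F+G=F(I+F^{-1}G)$, is finite meromorphic at $w$; that its $A_{0}$-coefficient is a Fredholm operator is the delicate point, to be obtained from the local Smith-type factorisation of finite meromorphic Fredholm operator functions, which reduces the question to a finite-dimensional one — this is the place where the calculus of \cite{GoSi71_01} and \cite[Section~4]{GoLe09_01} enters.

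To see that $\Ind_{\partial\D}\bigl(I+F^{-1}G\bigr)=0$ I would use the linear homotopy $H_{t}:=I+t\,F^{-1}G$, $t\in[0,1]$. For every $t$ one has $\Vert t\,F^{-1}G\Vert\le\Vert F^{-1}G\Vert<1$ on the fixed neighbourhood of $\partial\D$, so each $H_{t}$ is holomorphic and invertible there, is finite meromorphic and Fredholm at every point of $Z$ by the discussion above, and carries no characteristic value on $\partial\D$; thus $\Ind_{\partial\D}H_{t}$ is a well-defined integer for every $t$. Because $\partial\D$ stays free of characteristic values of $H_{t}$ throughout the deformation, the total multiplicity of the characteristic values of $H_{t}$ inside $\D$ cannot jump, so $t\mapsto\Ind_{\partial\D}H_{t}$ is constant; evaluating at the endpoints, $\Ind_{\partial\D}\bigl(I+F^{-1}G\bigr)=\Ind_{\partial\D}H_{1}=\Ind_{\partial\D}H_{0}=\Ind_{\partial\D}I=0$.

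The step I expect to be the real obstacle is the bookkeeping underlying these manipulations: certifying that the perturbed functions $F+G$ and the interpolants $H_{t}$ are genuinely finite meromorphic and Fredholm at the points of $Z$ — so that their logarithmic-residue integrals are of finite rank and \eqref{c20}, \eqref{gr4} may be applied — and justifying the stability of the integer-valued index under the homotopy $H_{t}$, i.e.\ that no characteristic value reaches $\partial\D$ and that the interior total multiplicity is conserved. Both are consequences of the local factorisation theory for finite meromorphic Fredholm operator functions rather than of anything peculiar to the present setting, and once they are in place the rest is the elementary Rouch\'e computation together with \eqref{gr4}. Note, finally, that in the situation in which this theorem is actually applied in the article — $F=I-K$ with compact-operator-valued $K$, and $G$ of the same type — one has $F^{-1}G$ compact, $I+F^{-1}G=I+(\text{compact})$ is Fredholm everywhere, and all of these verifications become automatic.
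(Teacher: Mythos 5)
The paper gives no proof of Theorem~\ref{a18} at all: it is cited verbatim from \cite[Theorem~4.4.3]{GoLe09_01} and \cite[Theorem~2.2]{GoSi71_01}, so there is no in-text argument to compare against. Your reconstruction follows precisely the standard Rouch\'e route used in those references: factor $F+G=F(I+F^{-1}G)$, invoke the multiplicativity \eqref{gr4}, and reduce the claim to $\Ind_{\partial\D}(I+F^{-1}G)=0$, which you obtain by the linear homotopy $H_{t}=I+tF^{-1}G$ together with the observation that $t\mapsto\Ind_{\partial\D}H_{t}$ is a continuous integer-valued function. That outline is correct, and you are right to single out the one genuinely non-routine step: verifying that $F+G$ (equivalently $I+F^{-1}G$), and the interpolants $H_{t}$, are finite meromorphic \emph{and Fredholm} at each $w\in Z$. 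The finite-rank bookkeeping in the Laurent product gives finite meromorphicity for free, but it does not give Fredholmness of the zero-order coefficient: that coefficient has the form $I+C_{0}D_{0}+(\text{finite rank})$ where $C_{0}$ is the Fredholm leading term of $F^{-1}$ and $D_{0}$ is an arbitrary bounded operator, and $I+C_{0}D_{0}$ need not be Fredholm a priori. Closing that gap really does require the local (Smith-type) factorisation of finite meromorphic Fredholm operator functions from \cite{GoSi71_01}, exactly as you indicate. Your closing remark — that in the applications of the present paper one always has $F,G\in I+\cS_{\infty}$, so $F^{-1}G$ is compact, $I+F^{-1}G$ is Fredholm automatically, and all of these checks trivialise — is also correct and explains why the authors could afford to cite the general theorem without reproving it.
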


\section{Asymptotic expansions: abstract results}\label{s2}

Let $\D$ be a domain of $\C$ containing $0$, and $\cH$ be a separable Hilbert space. We consider a holomorphic operator function
\begin{equation*}
A: \D \longrightarrow \cS_{\infty} ( \cH ).
\end{equation*}
For $\Omega \subset \D$, we denote by $\cZ ( \Omega )$ the
set of the characteristic values of $I- \frac{A (z)}{z}$,
i.e.
\begin{equation*}
\cZ ( \Omega ) :=  \Big\{ z \in \Omega \setminus \{ 0 \} ; \
I-\frac{A (z)}{z} \text{ is not invertible} \Big\} ,
\end{equation*}
and by $\cN ( \Omega )$ the number of characteristic values in
$\Omega$ counted with their multiplicities, i.e.
\begin{equation*}
\cN ( \Omega ):= \# \cZ ( \Omega ) .
\end{equation*}
We refer to Section \ref{a19} for details concerning the characteristic values.

We deduce from Proposition \ref{c21} that $\cZ ( \D )$ is a finite
set in a neighborhood of the origin as soon as $A(0)$ is of finite
rank. In this section we assume that $A(0)$ is a selfadjoint
operator and we are mainly interested in the case where $A(0)$ is
of infinite rank.

We will formulate results concerning the number of characteristic
values of $I - \frac{A (z)}{z}$ in two  types of domains:
small domains  of the form $s\Omega$ with $\Omega \Subset \C
\setminus \{ 0 \}$ fixed and $s$ tending to $0$, and sectorial
domains of the form
\begin{equation}\label{z1}
{\s}_{\theta}(a,b) : = \{ x + i y \in \C; \ a \leq x \leq b , \ \vert y \vert  \leq \theta \vert x \vert \} ,
\end{equation}
with $b , \theta >0$ fixed and $a>0$ tending to $0$.

Our goal is to describe situations where  the
behaviour of $\cN(s\Omega)$ as $s \searrow 0$, or of
$\cN(\s_{\theta} (r,1))$ as $r \searrow 0$, is related to the
asymptotics of the number
\begin{equation*}
n ( \Lambda ) : = \tr \one_{\Lambda} ( A (0) ),
\end{equation*}
of the eigenvalues of the operator $A (0)$, lying in an
appropriate set $\Lambda \subset \R$, and
 counted with their multiplicities. Let $\Pi_{0}$ be the orthogonal projection  onto $\ker A (0)$, and
$\overline{\Pi}_{0} := I - \Pi_{0}$. In small domains we have:

\begin{theorem}\sl \label{a17}
Let $\D$ be a domain of $\C$ containing $0$ and let $A$ be a holomorphic operator-valued function
\begin{equation*}
A: \D \longrightarrow \cS_{\infty} ( \cH ) ,
\end{equation*}
such that $A(0)$ is selfadjoint and $I - A^{\prime} (0) \Pi_{0}$
is invertible. Assume that $\Omega \Subset \C \setminus \{ 0 \}$
is a bounded domain with smooth boundary $\partial \Omega$ which
is transverse to the real axis at each point of $\partial \Omega
\cap \R$. Then, for all $\delta > 0$ small enough, there exists $s
( \delta  ) > 0$ such that, for all $0 < s < s ( \delta )$, we
have
\begin{equation*}
\cN (s \Omega ) = n (s J ) + \cO \big( n (s I_{\delta} ) \vert \ln \delta \vert^{2} \big) ,
\end{equation*}
where $J : = \Omega \cap \R$, $I_{\delta} : = \partial \Omega \cap
\R + [- \delta , \delta ]$ and the $\cO$ is uniform with respect to $s , \delta$.
\end{theorem}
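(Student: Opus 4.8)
The plan is to reduce the counting of characteristic values of $I - A(z)/z$ inside $s\Omega$ to the counting of eigenvalues of the finite-rank truncation of $A(0)$, via the Rouché-type theorem (Theorem~\ref{a18}) and the index $\Ind_{\partial\Omega}$. First I would write $A(z) = A(0) + z\, \widetilde A(z)$ with $\widetilde A$ holomorphic and $\widetilde A(0) = A'(0)$, so that
\begin{equation*}
z\Big( I - \frac{A(z)}{z}\Big) = z I - A(0) - z\,\widetilde A(z).
\end{equation*}
Since multiplication by the scalar $z$ does not change invertibility on $\C\setminus\{0\}$, the characteristic values of $I - A(z)/z$ in $s\Omega$ coincide (with multiplicities) with those of the holomorphic family $G(z) := zI - A(0) - z\,\widetilde A(z)$; I would justify the equality of multiplicities using the logarithmic-derivative formula \eqref{c17} together with the product rule \eqref{gr4}, noting that $\Ind_{\partial(s\Omega)}(z\cdot I) = 0$ since $0\notin s\Omega$.

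Next I would split $\cH = \Ran\Pi_0 \oplus \Ran\overline\Pi_0$ and treat $A(0)$ as block-diagonal, $A(0) = 0\oplus B$ with $B = \overline\Pi_0 A(0)\overline\Pi_0$ selfadjoint and injective on $\Ran\overline\Pi_0$. On the $\overline\Pi_0$-block the dominant operator is $zI - B$, which is invertible for $z\in s\Omega$ precisely away from $\mathrm{spec}(B)\cap s\Omega$, contributing $n(sJ)$ characteristic values (up to the boundary-layer correction); on the $\Pi_0$-block the dominant term is $z(I - A'(0)\Pi_0)$, invertible by hypothesis, contributing none. To make this rigorous I would use a Schur-complement / Feshbach reduction: writing the $2\times 2$ block form of $G(z)$, the $\Pi_0$-block is $z(\Pi_0 - \Pi_0\widetilde A(z)\Pi_0)$ which for small $|z|$ is invertible (here $I - A'(0)\Pi_0$ invertible is used, after compressing to $\Ran\Pi_0$, together with continuity in $z$), so $\Ind_{\partial(s\Omega)}$ of $G$ equals $\Ind_{\partial(s\Omega)}$ of the effective operator on $\Ran\overline\Pi_0$,
\begin{equation*}
E(z) := zI - B - z\,\widetilde A(z)\overline\Pi_0 - z^2 (\text{Schur correction terms}),
\end{equation*}
all correction terms being $\cO(z)$ relatively to $zI - B$ in the region where $zI-B$ is boundedly invertible.

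The comparison operator is $E_0(z) := zI - B$ on $\Ran\overline\Pi_0$, whose characteristic values in $s\Omega$ are exactly the eigenvalues of $B$ in $s\Omega$, counted with multiplicity, i.e. $n(sJ)$ plus those eigenvalues that fall in the boundary layer $sI_\delta$. Away from the $\delta$-neighbourhood of $\partial\Omega\cap\R$, the transversality assumption gives a lower bound $\|(E_0(z))^{-1}\|^{-1} \gtrsim \delta s$ uniformly for $z\in \partial(s\Omega)$ at distance $\geq \delta s$ from the spectrum, while the perturbation $E(z) - E_0(z)$ is $\cO(s^2)$ in norm; so on that part of the contour $\|E_0(z)^{-1}(E(z)-E_0(z))\| \lesssim s/\delta < 1$ for $s$ small, and Theorem~\ref{a18} applies. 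Near $\partial\Omega\cap\R$ I would instead deform the contour $\partial(s\Omega)$ slightly (using transversality to keep it at controlled distance from $\R$) and absorb the discrepancy into an error bounded by the number of eigenvalues of $B$ in $sI_\delta$, times a combinatorial factor; the $|\ln\delta|^2$ in the error comes from estimating how many "near-boundary" eigenvalues of $B$ must be re-counted when the contour is perturbed, via a dyadic decomposition of the annular region near $\partial\Omega\cap\R$ into $\cO(|\ln\delta|)$ scales.

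The main obstacle I expect is precisely this boundary-layer analysis: controlling $\Ind_{\partial(s\Omega)}$ when eigenvalues of $B$ sit close to $\partial(s\Omega)$, where neither $\|E_0(z)^{-1}\|$ is large nor the Rouché smallness condition holds uniformly. The resolution is a careful geometric argument exploiting transversality of $\partial\Omega$ to $\R$ (so that $\partial\Omega$ meets $\R$ at finitely many points, nontangentially), combined with a Weyl-type counting bound $n(sI_\delta') \lesssim n(sI_\delta)$ for slightly enlarged intervals and the observation that $B$ is selfadjoint so its characteristic values lie on $\R$; the $|\ln\delta|$ factors are the price of the dyadic covering. Everything else — the reduction via \eqref{gr4}, the Feshbach step, and the norm estimate on $\widetilde A(z)$ — is routine given holomorphy and compactness.
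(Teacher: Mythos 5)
There is a genuine gap in the Rouch\'e comparison between $E(z)$ and $E_0(z) = z I - B$. The Schur correction is $E(z)-E_0(z) = -z\overline\Pi_0\widetilde A(z)\overline\Pi_0 - z\,\overline\Pi_0\widetilde A(z)\Pi_0\bigl(\Pi_0-\Pi_0\widetilde A(z)\Pi_0\bigr)^{-1}\Pi_0\widetilde A(z)\overline\Pi_0$, and since $\widetilde A(0)=A'(0)$ is generically nonzero, this is $\cO(|z|) = \cO(s)$ in norm on $s\Omega$, \emph{not} $\cO(s^2)$ as you assert. Meanwhile $\|E_0(z)^{-1}\| \sim 1/\dist(z,\sigma(B))$, which on the part of $\partial(s\Omega)$ kept at distance $\gtrsim\delta s$ from $\R$ is $\gtrsim 1/(\delta s)$. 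The Rouch\'e quotient is therefore $\|E_0^{-1}(E-E_0)\|\lesssim s/(\delta s) = 1/\delta$, which is \emph{large} precisely when $\delta$ is small — the regime of interest — so Theorem~\ref{a18} does not apply. (And if you instead absorb the $z\,\overline\Pi_0 A'(0)\overline\Pi_0$ term into $E_0$ to make the remainder $\cO(s^2)$, you lose the identification of $\cZ(E_0)$ with $\sigma(B)$, since nothing guarantees $\overline\Pi_0(I-A'(0))\overline\Pi_0$ is invertible — the hypothesis concerns $I-A'(0)\Pi_0$, not $I - A'(0)\overline\Pi_0$.) The paper circumvents exactly this obstruction: it first strips off the finite-rank piece $K_\delta(s)=A(0)\one_{I_\delta}(A(0)/s)$ (the boundary-layer eigenvalues), and then shows (the strong limit \eqref{a8} in the proof of Lemma~\ref{a4}) that $\overline\Pi_0\bigl(I-\frac{A(0)-K_\delta(s)}{sw}\bigr)^{-1}$ tends \emph{strongly} to $0$ as $s\to 0$; combined with the \emph{compactness} of $A'(0)$ this upgrades to smallness in operator norm, which is what makes the Rouch\'e step in Lemma~\ref{a24} legitimate. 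That compactness-plus-strong-convergence argument — not a crude $\cO(s)/\cO(\delta s)$ bound — is the essential idea your proposal is missing.

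A second, independent gap is the $|\ln\delta|^2$ error term. Your ``dyadic decomposition into $\cO(|\ln\delta|)$ scales'' followed by ``contour deformation'' is a gesture, not an argument: you neither say how the characteristic values are counted at each scale nor explain why the product of the scale count with anything gives $n(sI_\delta)|\ln\delta|^2$. In the paper this is the content of Proposition~\ref{a1}, whose proof requires the Sj\"ostrand factorization theorem (Theorem~\ref{a28}) applied to the relative determinant $f_\delta(s,z) = \det\bigl(I-\frac{K_\delta(s)}{sz}(I-\frac{A(sz)-K_\delta(s)}{sz})^{-1}\bigr)$; one $|\ln\delta|$ comes from the bound $N_\delta(s)\lesssim |\ln\delta|\,n(sI_\delta)$ on the number of zeros in Lemma~\ref{a12}, and the second from the elementary integral $\int_\delta^1 \frac{|\ln x|}{x}\,dx = \frac12|\ln\delta|^2$ in the estimate of $\int_{\partial\Omega\setminus\Gamma_{\delta/4}} g'_\delta$. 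Without some substitute for this complex-analytic input, the log-squared bound does not materialize. In summary: the Feshbach/Schur skeleton and the reduction of invertibility of $\Pi_0 - \Pi_0 A'(0)\Pi_0$ from $I - A'(0)\Pi_0$ are sound ideas, but the quantitative heart of the proof — compactness to save the Rouch\'e step, and the Sj\"ostrand factorization to produce $|\ln\delta|^2$ — is absent, and with only what you have written the argument fails at both of those points.
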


\begin{figure}
\begin{center}
\begin{picture}(0,0)%
\includegraphics{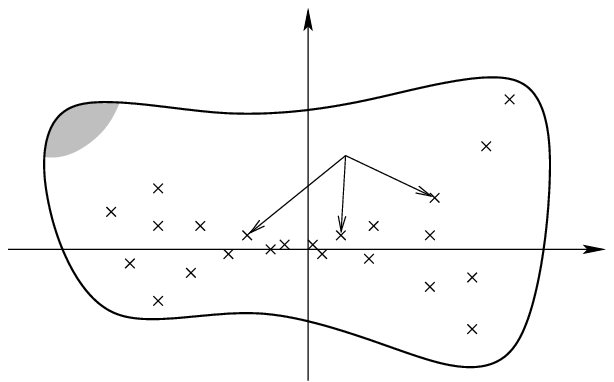}%
\end{picture}%
\setlength{\unitlength}{1184sp}%
\begingroup\makeatletter\ifx\SetFigFont\undefined%
\gdef\SetFigFont#1#2#3#4#5{%
  \reset@font\fontsize{#1}{#2pt}%
  \fontfamily{#3}\fontseries{#4}\fontshape{#5}%
  \selectfont}%
\fi\endgroup%
\begin{picture}(9926,6044)(2097,-7283)
\put(7501,-3511){\makebox(0,0)[lb]{\smash{{\SetFigFont{9}{10.8}{\rmdefault}{\mddefault}{\updefault}$\cZ ( \D )$}}}}
\put(3676,-4036){\makebox(0,0)[lb]{\smash{{\SetFigFont{9}{10.8}{\rmdefault}{\mddefault}{\updefault}$\D$}}}}
\put(7726,-2011){\makebox(0,0)[lb]{\smash{{\SetFigFont{9}{10.8}{\rmdefault}{\mddefault}{\updefault}$\C$}}}}
\end{picture}%
\qquad
\begin{picture}(0,0)%
\includegraphics{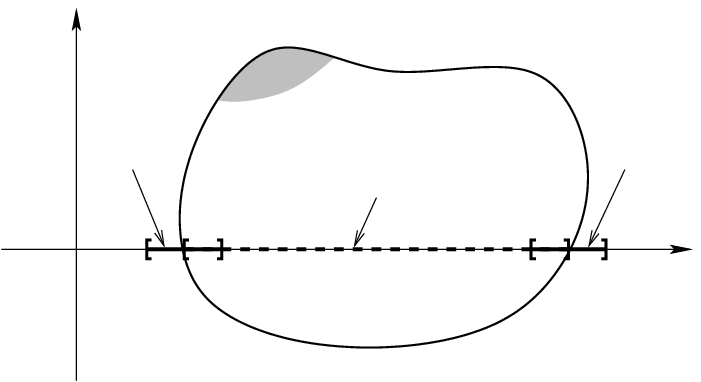}%
\end{picture}%
\setlength{\unitlength}{1184sp}%
\begingroup\makeatletter\ifx\SetFigFont\undefined%
\gdef\SetFigFont#1#2#3#4#5{%
  \reset@font\fontsize{#1}{#2pt}%
  \fontfamily{#3}\fontseries{#4}\fontshape{#5}%
  \selectfont}%
\fi\endgroup%
\begin{picture}(11144,6044)(1779,-7283)
\put(6301,-3211){\makebox(0,0)[lb]{\smash{{\SetFigFont{9}{10.8}{\rmdefault}{\mddefault}{\updefault}$s \Omega$}}}}
\put(3526,-2011){\makebox(0,0)[lb]{\smash{{\SetFigFont{9}{10.8}{\rmdefault}{\mddefault}{\updefault}$\C$}}}}
\put(7651,-4186){\makebox(0,0)[lb]{\smash{{\SetFigFont{9}{10.8}{\rmdefault}{\mddefault}{\updefault}$s J$}}}}
\put(11626,-3736){\makebox(0,0)[lb]{\smash{{\SetFigFont{9}{10.8}{\rmdefault}{\mddefault}{\updefault}$s I_{\delta}$}}}}
\put(3601,-3736){\makebox(0,0)[lb]{\smash{{\SetFigFont{9}{10.8}{\rmdefault}{\mddefault}{\updefault}$s I_{\delta}$}}}}
\put(10351,-4861){\makebox(0,0)[lb]{\smash{{\SetFigFont{9}{10.8}{\rmdefault}{\mddefault}{\updefault}$s \delta$}}}}
\end{picture}%
\caption{The set of characteristic values $\cZ ( \D )$ and the setting of Theorem \ref{a17}.} \label{f1}
\end{center}
\end{figure}

\begin{remark}\sl \label{c2a}
In the context of Theorem \ref{a17}, the assumptions of
Proposition \ref{c21} hold true and then the characteristic values
are well defined. This follows from the hypotheses of Theorem
\ref{a17} and Proposition \ref{p35} below.
\end{remark}

\begin{remark}\sl \label{c2}
In Theorem \ref{a17}, the remainder estimate is uniform with
respect to some perturbations of the domain $\Omega$. For example,
let $\theta > 0$ and $0 < a_{-} \leq a_{+} < b_{-} \leq b_{+} < +
\infty$. Then the conclusion of Theorem \ref{a17} holds for
$\Omega = \s_{\theta} (a ,b)$ uniformly with respect to $a,b$ such
that $a_{-} \leq a \leq a_{+}$ and $b_{-} \leq b \leq b_{+}$.
\end{remark}

The setting is illustrated on Figure \ref{f1}. With respect to
different types of $\Omega$,  Theorem \ref{a17} implies the
following properties on the characteristic values near $0$:

\begin{corollary}\sl \label{a22}
Under the assumptions of Theorem \ref{a17}, we have

$i)$ If $\Omega \cap \R = \emptyset$ then $\cN ( s \Omega ) = 0$ for $s$ small enough. This implies that the characteristic values $z \in \cZ ( \D )$ near $0$ satisfy
\begin{equation*}
\vert \im z \vert = o ( \vert z \vert ) .
\end{equation*}

$ii)$ Moreover, if $A (0)$ has a  definite sign, i.e. $\pm A
(0) \geq 0$, then the characteristic values $z$ near $0$ satisfy
\begin{equation*}
\pm \re z \geq 0 .
\end{equation*}

$iii)$ If $A (0)$ is of finite rank, then there are no
characteristic values in a pointed neighborhood of $0$. Moreover,
if $A (0) \one_{ [0 , + \infty [} (\pm A (0) )$ is of
finite rank, then there are no characteristic values in a
neighborhood of $0$ intersected with $\{ \pm \re z > 0 \}$.
\end{corollary}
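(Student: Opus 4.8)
\textbf{Proof proposal for Corollary \ref{a22}.}

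The plan is to deduce all three statements from Theorem \ref{a17} by a judicious choice of the domain $\Omega$, exploiting the fact that the leading term $n(sJ)$ and the remainder $n(sI_\delta)$ depend only on the geometry of $J=\Omega\cap\R$ and of $I_\delta$ relative to the spectrum of $A(0)$ near $0$.

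For $i)$, I would argue by contradiction. If $\Omega\cap\R=\emptyset$, then $J=\emptyset$, so $n(sJ)=0$; moreover $I_\delta=\partial\Omega\cap\R+[-\delta,\delta]=\emptyset$ for $\delta$ small (since $\partial\Omega$, being compact and disjoint from $\R$, stays at positive distance from $\R$), hence $n(sI_\delta)=0$ as well. Theorem \ref{a17} then gives $\cN(s\Omega)=0$ for all small $s$. For the consequence on characteristic values: suppose there were a sequence $z_k\in\cZ(\D)$ with $z_k\to 0$ but $|\im z_k|\geq c|z_k|$ for some fixed $c>0$. Then each $z_k$ lies in a fixed cone $\{|\im z|\geq c|z|\}$, and one can choose a bounded domain $\Omega\Subset\C\setminus\{0\}$ with smooth boundary transverse to $\R$, contained in this cone (hence with $\Omega\cap\R=\emptyset$), and such that $z_k/|z_k|\in\Omega$ for infinitely many $k$ (by compactness of the unit circle, pass to a subsequence so that $z_k/|z_k|$ converges to a point $\omega$ with $|\im\omega|\geq c$, and take $\Omega$ a small ball around $\omega$). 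Then $z_k\in |z_k|\Omega$ for infinitely many $k$, contradicting $\cN(s\Omega)=0$ for small $s$.

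For $ii)$, assume $A(0)\geq 0$ (the case $-A(0)\geq 0$ is symmetric). Then the spectrum of $A(0)$ near $0$ is contained in $[0,+\infty[$, so $n(\Lambda)=0$ for any $\Lambda\subset(-\infty,0)$ at positive distance from $0$; in particular, if $\Omega$ is chosen inside $\{\re z<0\}$, transverse to $\R$, then for small $s$ both $sJ$ and $sI_\delta$ (for $\delta$ small) lie in a fixed compact subset of $(-\infty,0)$, whence $n(sJ)=n(sI_\delta)=0$ and $\cN(s\Omega)=0$. Arguing as in $i)$, if some sequence of characteristic values $z_k\to 0$ had $\re z_k<0$ and stayed in a fixed cone $\{\re z\leq -c|z|\}$, we could trap it in such an $\Omega$ and reach a contradiction; I also need the separate case where $\re z_k<0$ but $z_k/|z_k|\to\pm i$, which is already excluded by $i)$. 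Combining, characteristic values near $0$ satisfy $\re z\geq 0$.

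For $iii)$: if $A(0)$ is of finite rank, then $\Pi_0$ has finite corank, so $A(0)$ has no eigenvalues accumulating at $0$; hence for any bounded $\Omega\Subset\C\setminus\{0\}$ one has $n(sJ)=n(sI_\delta)=0$ for all sufficiently small $s$ and $\delta$, so $\cN(s\Omega)=0$, and covering a pointed neighborhood of $0$ by dilates of finitely many such $\Omega$ gives the first claim. For the refined statement, suppose $A(0)\one_{[0,+\infty[}(A(0))$ is of finite rank, i.e. $A(0)$ has only finitely many nonnegative eigenvalues (counted with multiplicity); then the positive spectrum does not accumulate at $0$, so for $\Omega$ chosen in $\{\re z>0\}$ and transverse to $\R$ we again get $n(sJ)=n(sI_\delta)=0$ for small $s,\delta$, hence no characteristic values; covering the region $\{\re z>0\}$ near $0$ by such domains finishes the proof. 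The main obstacle is the covering/trapping argument: one must verify that a half-neighborhood such as $\{0<|z|<\varepsilon\}\cap\{\re z>0\}$ can be exhausted by finitely many sets of the form $s\Omega$ with $\Omega$ ranging over a fixed finite family of admissible domains (bounded, away from $0$, smooth boundary transverse to $\R$) and $s$ ranging over $(0,s(\delta))$ — this is a routine but slightly delicate scaling-and-compactness argument, and care is needed near the real axis where transversality of $\partial\Omega$ must be maintained while still covering points arbitrarily close to $\R$.
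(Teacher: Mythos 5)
Your plan—deducing the corollary from Theorem \ref{a17} by careful choices of $\Omega$—is valid, and it is in fact the route the paper hints at in the sentence just before the corollary (``With respect to different types of $\Omega$, Theorem \ref{a17} implies the following properties\dots''). However, the paper's actual proof goes via Proposition \ref{p35} instead: if $\bigl(I - A(0)/z\bigr)^{-1}$ is uniformly bounded on a set $S$, there are no characteristic values in $S$ near $0$. This bypasses the counting machinery entirely. For example, for $ii)$: if $A(0)\ge 0$ and $\re z < 0$, then $\dist(z,[0,+\infty[) = |z|$, so $\|(I-A(0)/z)^{-1}\| = \|z(z-A(0))^{-1}\| \le 1$, and Proposition \ref{p35} at once gives a free half-plane; no trapping of sequences is needed. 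Similarly, in $iii)$, finite rank of $A(0)\one_{[0,+\infty[}(\pm A(0))$ forces $\dist(z,\sigma(A(0)))\ge |z|$ on $\{\pm\re z>0,\ |z|$ small$\}$, and the conclusion is immediate. The resolvent-bound route is thus shorter and uniform across all three parts, which is presumably why the paper uses it.

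Your version is correct but can be streamlined: the ``covering by finitely many admissible $\Omega$'' step in $iii)$, which you rightly flag as delicate, is unnecessary. The cleanest fix within your framework is to use the same sequence/compactness trapping argument you already used in $i)$ and $ii)$: if $z_k\to 0$ were a sequence of characteristic values with $\re z_k > 0$, extract $z_k/|z_k|\to\omega$ with $\re\omega\ge 0$; the case $\omega=\pm i$ is killed by $i)$, and otherwise a single admissible ball $\Omega$ around $\omega$ (with $\Omega\cap\R\subset(0,+\infty)$, empty if $\omega\notin\R$) gives $n(sJ)=n(sI_\delta)=0$ for small $s,\delta$ and hence $\cN(s\Omega)=0$, contradiction. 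Alternatively, part $i)$ confines characteristic values to a cone $\{|\im z|<c|z|\}$, so a single annular sector $\Omega$ straddling the positive real axis (with smoothed corners away from $\R$) suffices, eliminating any need for a finite family. So the covering concern, while genuine as stated, is not a real obstruction once one either reuses the trapping idea or first invokes $i)$.
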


\begin{remark}\sl
According to Proposition \ref{c21}, the first sentence of
Corollary \ref{a22} $iii)$ holds true even if $A(0)$ is non
selfadjoint.
\end{remark}

In fact Corollary \ref{a22} is a consequence of the following
result which, under an appropriate spectral condition on $A(0)$,
guarantees the existence of a region free of  characteristic values of $I - A(z)/z$.  We omit its proof since it is quite similar to the one of Lemma \ref{a4} below.

\begin{proposition}\label{p35}\sl
Assume the hypotheses of Theorem \ref{a17}. Let  the operator-valued function
\begin{equation*}
z \longmapsto  \Big( I - \frac{A(0)}{z}\Big)^{-1} ,
\end{equation*}
be well defined and uniformly bounded on the set $S \subset \D \setminus \{0\}$. Then, there exist $r_0>0$ such that   $\cZ ( S ) \cap \{ \vert z \vert < r_0 \} = \emptyset$, and a constant $C>0$ independent of  $r_0$ and $S$, such that
\begin{equation*}
\sup_{z \in S \cap \{ \vert z \vert < r_{0} \}} \Big\Vert \Big( I-\frac{A(z)}{z}\Big)^{-1} \Big\Vert \leq C \sup_{z \in S \cap \{ \vert z \vert < r_{0} \}}  \Big\Vert \Big( I-\frac{A(0)}{z}\Big)^{-1} \Big\Vert ,
\end{equation*}
if $S \cap \{ \vert z \vert < r_0 \} \neq \emptyset$.
\end{proposition}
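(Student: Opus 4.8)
The plan is to reduce the invertibility of $I - A(z)/z$ to that of $I - A(0)/z$ by a Neumann-series argument, controlling the difference $\frac{A(z) - A(0)}{z}$ via the holomorphy of $A$. Write
\[
I - \frac{A(z)}{z} = \Big( I - \frac{A(0)}{z} \Big) - \frac{A(z) - A(0)}{z} = \Big( I - \frac{A(0)}{z} \Big) \Big( I - \Big( I - \frac{A(0)}{z} \Big)^{-1} \frac{A(z) - A(0)}{z} \Big),
\]
which is legitimate on $S \cap \{ 0 < |z| < r_0 \}$ once we know $I - A(0)/z$ is invertible there (true by hypothesis on all of $S$). So $I - A(z)/z$ is invertible as soon as
\[
\Big\Vert \Big( I - \frac{A(0)}{z} \Big)^{-1} \frac{A(z) - A(0)}{z} \Big\Vert < 1,
\]
and in that case the inverse is bounded by $\big(1 - \Vert \cdot \Vert\big)^{-1}$ times $\Vert (I - A(0)/z)^{-1} \Vert$.

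The key estimate is that $\Vert A(z) - A(0) \Vert = \cO(|z|)$ near $0$; this is immediate from the holomorphy of $A : \D \to \cS_\infty(\cH)$, since $A(z) - A(0) = z \int_0^1 A'(tz)\, dt$ in the sense of a Bochner integral of the $\cS_\infty$-valued (hence bounded-operator-valued) derivative, and $\Vert A'(tz) \Vert$ is bounded for $z$ in a fixed neighborhood of $0$. However, a naive bound of the form $\Vert (I - A(0)/z)^{-1} \Vert \cdot \cO(1)$ is not enough, because the resolvent factor is only assumed bounded, not small. The point is that $\frac{A(z) - A(0)}{z}$ is $\cO(1)$ with a constant that does \emph{not} go to zero, so we cannot directly close the Neumann series. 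The resolution — and here is where the argument becomes less routine — is to split $A'(0)$ using the projection $\Pi_0$ onto $\ker A(0)$. Write $A(z) - A(0) = z A'(0) + R(z)$ with $\Vert R(z) \Vert = \cO(|z|^2)$, so
\[
\Big( I - \frac{A(0)}{z} \Big)^{-1} \frac{A(z) - A(0)}{z} = \Big( I - \frac{A(0)}{z} \Big)^{-1} A'(0) + \Big( I - \frac{A(0)}{z} \Big)^{-1} \frac{R(z)}{z}.
\]
The second term is $\cO(|z|)$ times the bounded resolvent, hence small. For the first term, decompose $A'(0) = A'(0) \Pi_0 + A'(0) \overline{\Pi}_0$. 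On $\Ran \overline{\Pi}_0$ the operator $A(0)$ is bounded with bounded inverse on a neighborhood of its spectrum away from $0$, but more usefully $(I - A(0)/z)^{-1} \overline{\Pi}_0 = \overline{\Pi}_0 - A(0)(z - A(0))^{-1}\overline{\Pi}_0 \cdot (\text{bounded})$ — one checks $(I - A(0)/z)^{-1}\overline{\Pi}_0 \to \overline{\Pi}_0 - \overline{\Pi}_0 = 0$ strongly as… no: rather one uses that $(I - A(0)/z)^{-1}$ restricted to $\Ran \overline{\Pi}_0$ equals $z(z - A(0))^{-1}$, which is $\cO(|z|)$ on $\Ran\overline\Pi_0$ if $0$ is isolated in $\sigma(A(0)\overline\Pi_0)$ — but $A(0)$ of infinite rank means $0$ need not be isolated. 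Instead, and this is the cleanest route, observe $(I - A(0)/z)^{-1} A'(0)\overline\Pi_0 = (I - A(0)/z)^{-1}\overline\Pi_0 \cdot \overline\Pi_0 A'(0)\overline\Pi_0 + \cdots$; rather than chase this, one uses directly that on $\Ran\Pi_0$ the resolvent $(I - A(0)/z)^{-1}$ is the identity, so $(I - A(0)/z)^{-1} A'(0)\Pi_0 = $ (projection onto $\Ran\Pi_0$-part of) $A'(0)\Pi_0$ modulo the $\Ran\overline\Pi_0$ contribution.

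Concretely, the main step I expect to be the obstacle is handling the term $(I - A(0)/z)^{-1} A'(0) \Pi_0$ and showing the whole product $(I - A(0)/z)^{-1} A'(0)$ is, up to an error controlled by the hypothesis, essentially $(I - A'(0)\Pi_0)^{-1} - I$ or something of comparable norm, so that the invertibility of $I - A'(0)\Pi_0$ (assumed in Theorem~\ref{a17}, whose hypotheses we inherit) lets us factor it out. More precisely: from
\[
I - \frac{A(z)}{z} = \Big(I - \frac{A(0)}{z}\Big)\Big(I - A'(0) - \Big(I - \frac{A(0)}{z}\Big)^{-1}\frac{R(z)}{z} - \Big[\Big(I - \tfrac{A(0)}{z}\Big)^{-1} - I\Big]A'(0)\Big),
\]
and writing $\big(I - A(0)/z\big)^{-1} - I = \frac{A(0)}{z}\big(I - A(0)/z\big)^{-1} = (z - A(0))^{-1}A(0)$, which annihilates $\Ran\Pi_0$ on the right, one gets that the bracketed correction times $A'(0)$ only sees $A'(0)\overline\Pi_0$, and on $\Ran\overline\Pi_0$ one has the bound $\Vert (z - A(0))^{-1}A(0)\overline\Pi_0\Vert \le 1 + \Vert z(z - A(0))^{-1}\overline\Pi_0\Vert \le 1 + C_0$ where $C_0 := \sup_{S}\Vert(I - A(0)/z)^{-1}\Vert$ — not small, but finite. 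The honest conclusion is therefore: for $r_0$ small the operator $I - A(z)/z$ is invertible on $S \cap \{|z| < r_0\}$ because the factored expression is a product of $(I - A(0)/z)$ (invertible, norm of inverse $\le C_0$), $(I - A'(0)\Pi_0)$ modulo errors, times a Neumann-invertible factor; tracking the norms gives exactly the claimed bound $\sup \Vert(I - A(z)/z)^{-1}\Vert \le C \sup\Vert(I - A(0)/z)^{-1}\Vert$ with $C$ depending only on $\Vert(I - A'(0)\Pi_0)^{-1}\Vert$ and $\sup_{\D'}\Vert A'\Vert$ on a fixed small disc $\D' \ni 0$, hence independent of $r_0$ and $S$. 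I would carry this out by: (1) establishing $\Vert A(z) - A(0) - zA'(0)\Vert = \cO(|z|^2)$; (2) the factorization above; (3) the identity $(I - A(0)/z)^{-1}(I - \Pi_0) = $ bounded and $(I - A(0)/z)^{-1}\Pi_0 = \Pi_0$; (4) invoking invertibility of $I - A'(0)\Pi_0$; (5) a Neumann series for the final factor once $r_0$ is small; (6) multiplying the norm bounds. The delicate bookkeeping in step (3)–(5) — ensuring the error terms that are $\cO(C_0 |z|)$ become genuinely small (they do, since $C_0$ is fixed while $|z| \to 0$) without circularity in the choice of $r_0$ — is the only real subtlety, the rest being routine.
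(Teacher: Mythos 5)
The paper does not spell out the proof of Proposition~\ref{p35}; it says it is ``quite similar to the one of Lemma~\ref{a4}'', which is the right template. Your outline follows that template in spirit --- factor out $(I - A(0)/z)$, Taylor-expand $A$ near $0$, decompose $A'(0)$ via $\Pi_0$ and $\overline\Pi_0$, and invoke the invertibility of $I - A'(0)\Pi_0$ --- but the execution has a genuine gap at the crucial point and contains an algebraic slip.

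The slip first: because you place the resolvent on the \emph{left}, i.e.\ you write $I - A(z)/z = (I - A(0)/z)\bigl(I - (I - A(0)/z)^{-1}\tfrac{A(z)-A(0)}{z}\bigr)$, the relevant splitting of $A'(0)$ is $A'(0) = \Pi_0 A'(0) + \overline\Pi_0 A'(0)$ (projections on the \emph{range} side), not $A'(0)\Pi_0 + A'(0)\overline\Pi_0$. Indeed $[(I - A(0)/z)^{-1} - I]\,\Pi_0 = 0$, so the correction $[(I - A(0)/z)^{-1} - I]A'(0)$ equals $[(I - A(0)/z)^{-1} - I]\,\overline\Pi_0 A'(0)$ --- \emph{not} a multiple of $A'(0)\overline\Pi_0$ as you claim --- and the principal part that survives is $I - \Pi_0 A'(0)$, not $I - A'(0)\Pi_0$. (These two are invertible simultaneously, so this is repairable, but your bookkeeping does not track the right operator.) If one prefers to land on $I - A'(0)\Pi_0$ directly, one should put the resolvent on the \emph{right}, as in \eqref{a5}--\eqref{a25} of Lemma~\ref{a4}; then $A'(0)\Pi_0\,(I - A(0)/z)^{-1} = A'(0)\Pi_0$ and the correction involves $A'(0)\overline\Pi_0$.

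The gap: you write that the $\overline\Pi_0$ correction is ``not small, but finite'', i.e.\ only $\cO(1)$, and then you nevertheless try to close a Neumann series on the grounds that ``the error terms that are $\cO(C_0|z|)$ become genuinely small''. But the correction term $[(I - A(0)/z)^{-1} - I]\,\overline\Pi_0 A'(0)$ is \emph{not} $\cO(|z|)$; naively it is bounded by $(1+C_0)\Vert A'(0)\Vert$ and a Neumann series does not converge from that. The missing ingredient --- the very one that makes the proof of Lemma~\ref{a4} work and that must appear here --- is the combination of the two facts: $\overline\Pi_0(I - A(0)/z)^{-1} \to 0$ \emph{strongly} as $z \to 0$ in $S$ (this follows from the hypothesis $\sup_S\Vert(I-A(0)/z)^{-1}\Vert \le C_0$ together with the spectral theorem for $A(0)$ and dominated convergence: $|z/(z-\lambda)| \le C_0$ and $\to 0$ pointwise for $\lambda \ne 0$), and \emph{compactness} of $A'(0)$. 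Together these upgrade the strong limit to a \emph{norm} limit: $\bigl\Vert \overline\Pi_0(I - A(0)/z)^{-1} A'(0)\bigr\Vert \to 0$ as $z \to 0$ in $S$, because a bounded net of operators tending strongly to $0$, post-composed with a fixed compact operator, tends to $0$ in norm. You in fact approach this route early on (``one checks $(I - A(0)/z)^{-1}\overline\Pi_0 \to 0$ strongly as\ldots no:'') and then abandon it, noting correctly that $z(z-A(0))^{-1}\overline\Pi_0$ is not $\cO(|z|)$ in norm when $0$ is not isolated in $\sigma(A(0))$. But that observation is exactly why compactness is needed: strong convergence is the substitute, and $A'(0)\in\cS_\infty$ (or rather the fact that $R(z)/z = \cO(|z|)$ and $A'(0)\overline\Pi_0$ or $\overline\Pi_0 A'(0)$ is compact) closes the argument, as in the sentence of the paper ``Exploiting \eqref{a6}, \eqref{a8}, $szR_2(sz) = \cO(s)$, and the compactness of $A'(0)$, we deduce\ldots''. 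Without it, your Neumann series does not close and the claimed bound with a constant $C$ independent of $S$ and $r_0$ is not established.
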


Exploiting Theorem \ref{a17} with appropriate domains $\Omega$, we obtain the following result in sectorial domains:

\begin{theorem}\sl \label{a17b}
Let $\D$ be a domain of $\C$ containing $0$ and let a holomorphic operator function
\begin{equation*}
A: \D \longrightarrow \cS_{\infty} ( \cH ) ,
\end{equation*}
such that $A(0)$ is selfadjoint and $I - A^{\prime} (0) \Pi_{0}$
is invertible. For $\theta > 0$ fixed, let $\s_{\theta} (r,1)
\subset \D$ be defined as in \eqref{z1}. Then, for all
$\delta
> 0$ small enough, there exists $s ( \delta  ) > 0$ such that, for
all $0 < s < s ( \delta )$, we have
\begin{equation*}
\cN ( \s_{\theta} (r,1) )= n ([r,1] ) \big( 1 + \cO \big( {\delta} \vert \ln \delta \vert^{2} \big) \big) + \cO \big( \vert \ln \delta \vert^{2} \big) n ( [r(1-\delta),r(1+\delta)] ) + \cO_{\delta} (1) ,
\end{equation*}
where the $\cO$'s are uniform with respect to $s , \delta$ but the $\cO_{\delta}$ may depend on $\delta$.
\end{theorem}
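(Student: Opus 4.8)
The plan is to deduce \thmref{a17b} from \thmref{a17} by decomposing the sectorial region $\s_\theta(r,1)$ into a bounded number of dyadic-type pieces together with one small piece near $r$, applying \thmref{a17} on each piece with the \emph{same} $\delta$, and summing. Concretely, fix $\delta>0$ small and choose a number $L$ (independent of $r$ and $\delta$, but we will need to track a mild $|\ln\delta|$-type dependence coming from the overlaps) of scales $r = t_0 < t_1 < \dots < t_N = 1$ with ratios $t_{k+1}/t_k$ bounded above and below by constants, so that $N \sim |\ln r|$. On each annular sector $\s_\theta(t_k,t_{k+1})$ we write it as $s_k \Omega_k$ where $\Omega_k \Subset \C\setminus\{0\}$ is one of finitely many fixed domains (of the form $\s_\theta(\alpha,\beta)$ with $\alpha,\beta$ ranging over a compact set bounded away from $0$ and $\infty$, cf. \thmref{a17} and especially \Rm{Remark}~\ref{c2}, which grants uniformity of the remainder over such a family), and $s_k = t_k \to 0$. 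Applying \thmref{a17} on each piece gives $\cN(s_k\Omega_k) = n(s_k J_k) + \cO\big(n(s_k I_{\delta,k})|\ln\delta|^2\big)$, with $s_k J_k = [t_k,t_{k+1}]$ and $s_k I_{\delta,k} \subset [t_k(1-c\delta), t_{k+1}(1+c\delta)]$ for a fixed $c>0$.

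Next I would sum over $k$. The main terms $n([t_k,t_{k+1}])$ telescope to $n([r,1])$ since $A(0)$ is selfadjoint and the intervals $[t_k,t_{k+1}]$ partition $[r,1]$ up to endpoints (the endpoint contributions $n(\{t_k\})$ are absorbed into the error terms, or are zero generically). For the error terms one groups the $\delta$-collars: the collars $[t_k(1-c\delta),t_k(1+c\delta)]$ for $k \geq 1$ are disjoint once $\delta$ is small relative to the minimal ratio gap, so $\sum_{k\geq 1} n(s_k I_{\delta,k})$ is bounded by a fixed multiple of $n$ evaluated on the union of these collars inside $[r,1]$, which is a "$\delta$-fraction" of $[r,1]$; using monotonicity of $n$ and splitting $[r,1]$ into $\cO(\delta^{-1})$ sub-blocks shows this sum is $\cO(\delta) \, n([r,1]) + \cO_\delta(1)$ after re-summing — more carefully, one compares $\sum_k n([t_k(1-c\delta),t_k(1+c\delta)])$ against $n([r,1])$ by observing each point of the spectrum of $A(0)$ in $[r,1]$ lies in at most $\cO(1)$ collars (since consecutive collars overlap in at most a bounded way), giving the bound $\cO(\delta|\ln\delta|^2)\,n([r,1])$ after absorbing the per-collar $|\ln\delta|^2$ weight. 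The innermost piece near $r$ is handled separately: $\s_\theta(r,2r)$ is $r\Omega$ for a single fixed $\Omega$, contributing $n([r, 2r]) + \cO(|\ln\delta|^2) n([r(1-c\delta),r(1+c\delta)])$, which after absorbing $n([r,2r])$ into $n([r,1])$ leaves exactly the advertised $\cO(|\ln\delta|^2)\,n([r(1-\delta),r(1+\delta)])$ term. Finally, the boundary overlaps between consecutive pieces $\s_\theta(t_k,t_{k+1})$ and $\s_\theta(t_{k+1},t_{k+2})$ — characteristic values lying exactly on the shared edge — are controlled by the index-additivity relation \eqref{gr4} applied to the contour decomposition, contributing at worst $\cO_\delta(1)$ total (a bounded-in-$r$ count, since only $\cO(1)$ edges carry the potential double-count issue after a generic perturbation of the $t_k$ using \Rm{Remark}~\ref{c2}).

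The hard part will be the bookkeeping in the middle step: making precise the claim that summing the collar errors $\sum_k \cO(|\ln\delta|^2) n(s_k I_{\delta,k})$ over $N \sim |\ln r|$ pieces does not blow up with $r$ but instead collapses to $\cO(\delta|\ln\delta|^2) n([r,1]) + \cO_\delta(1)$. The key structural input is that $A(0)$ is selfadjoint, so $n$ is a genuine counting measure on $\R$ and its values on disjoint sets add; combined with the fact that the dilated collars $s_k I_{\delta,k}$ have bounded overlap multiplicity (choose the scale ratios $t_{k+1}/t_k$ to be a fixed constant $>1$ so that for $\delta$ small the $(1\pm c\delta)$-fattened intervals around $t_k$ meet at most their immediate neighbors), one gets $\sum_k n(s_k I_{\delta,k}) \leq C\, n\big(\bigcup_k s_k I_{\delta,k}\big) \leq C\,\big(\text{spectrum of } A(0) \text{ in a }\delta\text{-proportion of }[r,1]\big)$. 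Bounding that last quantity by $\delta\, n([r,1])$ requires a Weyl-type regularity of the eigenvalue counting — but one does \emph{not} need it, because one can instead absorb it crudely: partition $[r,1]$ into $m \sim \delta^{-1}$ geometric blocks $[r^{(j)}, r^{(j+1)}]$ with ratio $\sim(1/r)^{1/m}$, note each block contains $\cO(1)$ of the collars hence contributes $\cO(1) \cdot \cO(|\ln\delta|^2) \cdot n([r^{(j)},r^{(j+1)}])$, sum over $j$ to get $\cO(|\ln\delta|^2)\, n([r,1])$ — which is too weak by a factor $\delta$. Recovering the factor $\delta$ genuinely needs the overlap bound above, i.e. that a fixed eigenvalue $\lambda \in [r,1]$ of $A(0)$ satisfies $\lambda \in s_k I_{\delta,k}$ for at most $\cO(1)$ values of $k$ \emph{and} the total Lebesgue measure of $\bigcup_k s_k I_{\delta,k}$ inside $[r,1]$ is $\cO(\delta)\cdot$(measure of $[r,1]$) only in the multiplicative sense $\mathrm{meas}(s_k I_{\delta,k}) \asymp \delta \cdot \mathrm{meas}([t_k,t_{k+1}])$ — then the clean statement is $\sum_k n(s_k I_{\delta,k}) = \cO(\delta)\sum_k \big(n([t_k,t_{k+1}]) + \cO_\delta(1)\big) = \cO(\delta) n([r,1]) + \cO_\delta(1)$, which is exactly what is needed once the per-piece $|\ln\delta|^2$ weight is carried along. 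I would present this estimate as a short lemma before the main argument. The remaining steps — the application of \thmref{a17} piecewise, the telescoping of main terms, and the $\cO_\delta(1)$ edge corrections via \eqref{gr4} — are routine given the tools already assembled in Sections \ref{a19} and \ref{s2}.
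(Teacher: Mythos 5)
The overall strategy — decompose $\s_\theta(r,1)$ into dyadic-scale annular sectors, apply Theorem~\ref{a17} (via Remark~\ref{c2}) on each piece, telescope the main terms and control the cut-point collar errors — is the same as the paper's. But your handling of the collar errors has a genuine gap, and it is precisely the step you flag as ``the hard part.''

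You propose to use \emph{fixed} cut points $t_k$ with bounded ratios (e.g.\ $t_{k+1}/t_k=2$) and then argue that $\sum_k n(s_k I_{\delta,k}) = \cO(\delta)\, n([r,1]) + \cO_\delta(1)$ by combining bounded overlap multiplicity of the collars with the fact that the collars occupy a $\delta$-proportion of $[r,1]$ in Lebesgue measure. This does not work: $n$ is a counting measure with no a priori regularity, so the Lebesgue measure of $\bigcup_k s_k I_{\delta,k}$ tells you nothing about $n\bigl(\bigcup_k s_k I_{\delta,k}\bigr)$. Concretely, if the eigenvalues of $A(0)\one_{[0,\infty)}(A(0))$ all sit at the points $2^{-k}$ and you pick $t_k=2^{-k}$, then every eigenvalue in $[r,1]$ lies inside a collar, so $\sum_k n(s_k I_{\delta,k}) \asymp n([r,1])$, not $\cO(\delta)\,n([r,1])$. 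Bounded overlap of the collars only yields $\sum_k n(s_k I_{\delta,k}) \leq C\, n([r,1])$, never the crucial extra factor of $\delta$. Your own intermediate ``crude'' computation gives $\cO(|\ln\delta|^2)\,n([r,1])$, which, as you note, is too weak; the claimed fix does not close the gap. (There is also a secondary bookkeeping slip: an $\cO_\delta(1)$ per piece summed over $\sim|\ln r|$ pieces is $\cO_\delta(|\ln r|)$, not $\cO_\delta(1)$.)

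The missing idea is that the cut points must be chosen \emph{adaptively}, not fixed. This is the content of the paper's Lemma~\ref{b1}: by a pigeonhole argument over the $\sim\delta^{-1}$ disjoint candidate collars inside each dyadic band $[2^{-(j+1/4)}, 2^{-(j-1/4)}]$, one can select, for each $j$, a scale $\beta_j\in[j-1/4,j+1/4]$ so that the collar at $2^{-\beta_j}$ contains at most a $C\delta$-fraction of the eigenvalues in that band:
\begin{equation*}
n\bigl([2^{-\beta_j}(1-\delta),\,2^{-\beta_j}(1+\delta)]\bigr) \;\leq\; C\,\delta\; n\bigl([2^{-(j+1/4)},\,2^{-(j-1/4)}]\bigr).
\end{equation*}
Only after cutting at these good scales does the collar sum collapse, since the bands $[2^{-j-1/4},2^{-j+1/4}]$ are pairwise disjoint and contained in $[r,1]$, so their counts add up to at most $n([r,1])$. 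Without that pigeonhole selection the telescoping argument is sound but the error bound fails; with it, the rest of your outline (telescoping, Remark~\ref{c2} for uniformity over the finitely many rescaled sector shapes, and the separate innermost piece producing the $\cO(|\ln\delta|^2)\,n([r(1-\delta),r(1+\delta)])$ term) is essentially the paper's proof.
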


\begin{figure}
\begin{center}
\begin{picture}(0,0)%
\includegraphics{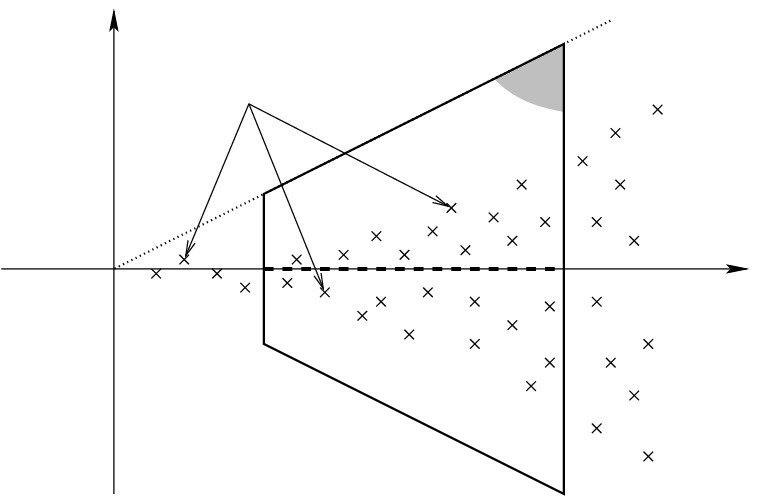}%
\end{picture}%
\setlength{\unitlength}{1184sp}%
\begingroup\makeatletter\ifx\SetFigFont\undefined%
\gdef\SetFigFont#1#2#3#4#5{%
  \reset@font\fontsize{#1}{#2pt}%
  \fontfamily{#3}\fontseries{#4}\fontshape{#5}%
  \selectfont}%
\fi\endgroup%
\begin{picture}(12044,7855)(1179,-8794)
\put(11101,-1411){\makebox(0,0)[lb]{\smash{{\SetFigFont{9}{10.8}{\rmdefault}{\mddefault}{\updefault}$y = \theta x$}}}}
\put(3526,-1411){\makebox(0,0)[lb]{\smash{{\SetFigFont{9}{10.8}{\rmdefault}{\mddefault}{\updefault}$\C$}}}}
\put(10351,-5011){\makebox(0,0)[lb]{\smash{{\SetFigFont{9}{10.8}{\rmdefault}{\mddefault}{\updefault}$1$}}}}
\put(5026,-5011){\makebox(0,0)[lb]{\smash{{\SetFigFont{9}{10.8}{\rmdefault}{\mddefault}{\updefault}$r$}}}}
\put(8326,-3061){\makebox(0,0)[lb]{\smash{{\SetFigFont{9}{10.8}{\rmdefault}{\mddefault}{\updefault}$\mathcal{C}_{\theta} (r,1)$}}}}
\put(4426,-2236){\makebox(0,0)[lb]{\smash{{\SetFigFont{9}{10.8}{\rmdefault}{\mddefault}{\updefault}$\mathcal{Z} ( \mathcal{D} )$}}}}
\end{picture}%
\caption{The setting of Theorem \ref{a17b} and its corollaries.} \label{f4}
\end{center}
\end{figure}

\begin{remark}\sl
$i)$ The same result holds mutatis mutandis for $\s_{\theta} ( - 1 , - r  )$, $r>0$, under the same assumptions. To prove this point, it is sufficient to change the variable  $z \to -z$ and replace $A(z)$ by $-A(-z)$.

$ii)$ The dependence on $\theta$ in Theorem \ref{a17b} is inessential. More precisely, thanks to Corollary \ref{a22} $i)$,  the difference $\cN ( \s_{\theta} (r,1) ) - \cN ( \s_{\nu} (r,1) )$ is constant for $r$ small enough.
\end{remark}

Now, let us give conditions on $n ([r,1] )$ for which Theorem \ref{a17b} implies asymptotic expansion of
$\cN (\s_{\theta} (r,1  ) )$ with main term $n ([r,1] )$.

\begin{corollary}\sl \label{a17c}
 Let the assumptions of Theorem \ref{a17b} hold true.  Suppose that there exists $\gamma>0$ such that
\begin{equation} \label{c23}
n ([r,1] )= \cO(r^{-\gamma}), \qquad  r \searrow 0,
\end{equation}
and that $n ([r,1] )$  grows unboundedly as $r \searrow 0$. Then there exists a  positive sequence $(r_k)_{k \in \N}$ tending to $0$,  which satisfies
\begin{equation} \label{end4}
\cN (\s_{\theta} (r_k,1 ) ) = n ([r_k,1] ) (1 + o(1)), \qquad  k \to + \infty .
\end{equation}
\end{corollary}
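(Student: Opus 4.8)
The plan is to feed suitable pairs $(r,\delta)=(r_k,\delta_k)$ into Theorem~\ref{a17b} and to show that both error terms there become negligible against the main term $n([r_k,1])$. Write $N(r):=n([r,1])$; by \eqref{c23} it satisfies $N(r)=\cO(r^{-\gamma})$, it is non-increasing in $r$, and it tends to $+\infty$ as $r\searrow 0$ by hypothesis. The whole difficulty will be to locate, for each small $\delta$, scales $r$ at which the eigenvalues of $A(0)$ do not concentrate in the multiplicative window $[r(1-\delta),r(1+\delta)]$.

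First I would establish the elementary fact that for every $\beta\in(0,1)$,
$$
\liminf_{r\searrow 0}\ \frac{N(\beta r)}{N(r)}\ \le\ \beta^{-\gamma}.
$$
This follows by contradiction: if $N(\beta r)/N(r)\ge\kappa>\beta^{-\gamma}$ for all sufficiently small $r$, then iterating from a fixed $r_0$ with $N(r_0)\ge 1$ gives $N(\beta^m r_0)\ge\kappa^m N(r_0)$, which contradicts $N(\beta^m r_0)\le C\beta^{-\gamma m}r_0^{-\gamma}$ because $\kappa\beta^{\gamma}>1$. This is the only place where the polynomial bound \eqref{c23} is used, and I expect it to be the genuine content of the proof; everything else is bookkeeping.

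Next, for $k$ large I would set $\delta_k:=1/k$ and $\hat\beta_k:=(1-\delta_k)/(1+2\delta_k)\in(0,1)$, so that $\hat\beta_k\to 1$ and $\hat\beta_k^{-\gamma}-1=\cO(\delta_k)$. Using the $\liminf$ fact I can choose $\hat r_k>0$ arbitrarily small with $N(\hat\beta_k\hat r_k)/N(\hat r_k)\le\hat\beta_k^{-\gamma}+\delta_k$; I would additionally take $\hat r_k$ small enough that $r_k:=\hat r_k/(1+2\delta_k)$ satisfies $r_k<s(\delta_k)$, $r_k<1/k$, and $N(r_k)\ge k(C_{\delta_k}+1)$, where $s(\delta)$ is as in Theorem~\ref{a17b} and $C_{\delta_k}$ is a constant bounding the $\cO_{\delta_k}(1)$ term there for $0<s<s(\delta_k)$ — all possible since $N\to\infty$. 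Then $r_k\to 0$, and from the inclusion $[r_k(1-\delta_k),r_k(1+\delta_k)]\subset[\hat\beta_k\hat r_k,\hat r_k)$ together with the monotonicity of $N$ and $N(\hat r_k)\le N(r_k)$,
$$
n\big([r_k(1-\delta_k),r_k(1+\delta_k)]\big)\ \le\ N(\hat\beta_k\hat r_k)-N(\hat r_k)\ \le\ \big(\hat\beta_k^{-\gamma}-1+\delta_k\big)N(\hat r_k)\ \le\ \big(\hat\beta_k^{-\gamma}-1+\delta_k\big)N(r_k).
$$

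Finally I would substitute $(r,\delta)=(r_k,\delta_k)$ into Theorem~\ref{a17b} and divide by $N(r_k)=n([r_k,1])$. The factor $1+\cO(\delta_k|\ln\delta_k|^2)$ is $1+o(1)$; the middle term becomes $\cO\big(|\ln\delta_k|^2(\hat\beta_k^{-\gamma}-1+\delta_k)\big)=\cO(\delta_k|\ln\delta_k|^2)=o(1)$; and $\cO_{\delta_k}(1)/N(r_k)\le C_{\delta_k}/(k(C_{\delta_k}+1))=o(1)$. This yields $\cN(\s_{\theta}(r_k,1))=n([r_k,1])(1+o(1))$ as $k\to\infty$, which is \eqref{end4}. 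The routine parts are the monotonicity/inclusion bookkeeping for $N$ and the estimate $\hat\beta_k^{-\gamma}-1=\cO(\delta_k)$; the delicate point remains the $\liminf$ fact, and its necessity is reflected by the counterexamples of Section~\ref{s7}.
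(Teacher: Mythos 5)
Your proof is correct and follows essentially the same route as the paper's: the paper isolates the crucial combinatorial input as Lemma~\ref{b6} (for non-increasing $\Psi=\cO(r^{-\gamma})$ there exist arbitrarily small $r$ with $\Psi(r(1-\delta))-\Psi(r(1+\delta))\leq C\delta\,\Psi(r)$), which is proved by the same iteration-plus-polynomial-bound contradiction as your $\liminf$ fact. The remaining bookkeeping — choosing $\delta_k\to 0$, taking $r_k$ small enough that $n([r_k,1])$ swamps the $\cO_{\delta_k}(1)$ term, and feeding $(r_k,\delta_k)$ into Theorem~\ref{a17b} — also matches the paper, so your argument is a faithful (if slightly more elaborate in the scale-chaining with $\hat r_k$, $\hat\beta_k$) rendering of the same proof.
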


\begin{remark}\sl
Note that  \eqref{c23} is equivalent to the fact that
 $A (0) \one_{[0, +\infty [} (A (0) ) \in
\cS_{p}$ for some $p \in [ 1 , + \infty [$. Such hypothesis is
reasonable since it guarantees that, in some sense, $A (0)$ has
less eigenvalues  in a vicinity of $r$ than in the whole interval $[r , 1]$ (see Lemma \ref{b6} below).
 On the other hand, the assumption that $n ([r,1] )$  grows unboundedly as $r \searrow 0$ is equivalent to ${\rm rank}\, A (0) \one_{[0 , + \infty [} (A (0) ) = +\infty$.
\end{remark}

\begin{corollary}\sl \label{a17d}
Let the assumptions of Theorem \ref{a17b} hold true. Suppose that
\begin{equation*}
n ([r,1] )= \Phi (r) (1+ o(1)) , \qquad r \searrow 0 ,
\end{equation*}
with $\Phi(r) = C r^{-\gamma}$, or $\Phi(r) = C \vert \ln r \vert^\gamma$, or  $\Phi(r) = C \frac{\vert \ln r \vert}{\ln \vert \ln r \vert}$, for some $\gamma ,C > 0$. Then
\begin{equation*}
\cN (\s_{\theta} (r,1 ) ) = \Phi (r) (1 + o(1)), \qquad  r \searrow 0.
\end{equation*}
\end{corollary}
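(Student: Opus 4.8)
The plan is to deduce all three cases from Corollary \ref{a17c} together with Theorem \ref{a17b}, choosing the parameter $\delta$ optimally as a function of $r$. First I would check that each of the three model functions $\Phi(r) = Cr^{-\gamma}$, $\Phi(r) = C|\ln r|^\gamma$, and $\Phi(r) = C\frac{|\ln r|}{\ln|\ln r|}$ satisfies the polynomial upper bound \eqref{c23}: indeed all three are $\cO(r^{-\gamma'})$ for any $\gamma' > 0$ (for the logarithmic ones) or for $\gamma' = \gamma$, and all three grow unboundedly as $r\searrow 0$. Hence the hypotheses of Corollary \ref{a17c} are met; but that corollary only gives the asymptotics along a subsequence $(r_k)$, whereas here we want it for all $r\searrow 0$, so I cannot merely quote it — I must re-examine the proof of Theorem \ref{a17b} and exploit the extra regularity of $\Phi$.

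The key point is to control the two error terms in Theorem \ref{a17b}: the relative error $\cO(\delta|\ln\delta|^2)$ multiplying $n([r,1])$, and the absolute error $\cO(|\ln\delta|^2)\, n([r(1-\delta),r(1+\delta)])$ coming from the eigenvalues of $A(0)$ near the level $r$. Using $n([r,1]) = \Phi(r)(1+o(1))$ one has
\begin{equation*}
n([r(1-\delta),r(1+\delta)]) = \Phi(r(1-\delta)) - \Phi(r(1+\delta)) + o(\Phi(r)).
\end{equation*}
For $\Phi(r) = Cr^{-\gamma}$ this difference is $\big((1-\delta)^{-\gamma}-(1+\delta)^{-\gamma}\big)\Phi(r) + o(\Phi(r)) = \cO(\delta)\Phi(r) + o(\Phi(r))$; for $\Phi(r) = C|\ln r|^\gamma$ it is $\cO\big(\delta |\ln r|^{\gamma-1}\big) + o(\Phi(r)) = o(\Phi(r))$ for fixed $\delta$; and similarly for the third choice the difference is $o(\Phi(r))$ for fixed $\delta$. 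Thus in every case, for fixed $\delta$,
\begin{equation*}
\cN(\s_\theta(r,1)) = \Phi(r)\big(1 + \cO(\delta|\ln\delta|^2)\big) + o(\Phi(r)) + \cO_\delta(1)
\end{equation*}
as $r\searrow 0$, where the $o(\Phi(r))$ and $\cO_\delta(1)$ may depend on $\delta$. Dividing by $\Phi(r)$ and letting $r\searrow 0$ gives $\limsup_r |\cN(\s_\theta(r,1))/\Phi(r) - 1| \leq C\delta|\ln\delta|^2$; since $\delta>0$ is arbitrary, the left side is $0$, which is exactly the claim.

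The main obstacle I expect is bookkeeping the order of quantifiers: Theorem \ref{a17b} is stated for each $\delta$ small, there exists $s(\delta)$ such that for $s < s(\delta)$ the estimate holds, so one must first fix $\delta$, then let $r = s\searrow 0$ (which is legitimate since all the $\cO$'s except $\cO_\delta(1)$ are uniform in $s,\delta$), extract the $\limsup$, and only then send $\delta\searrow 0$ — the $\cO_\delta(1)$ and the $\delta$-dependent $o(\Phi(r))$ terms are harmless because they are absorbed before the outer limit. A secondary point requiring a little care is that in the polynomial case $\Phi(r)=Cr^{-\gamma}$ the term $n([r(1-\delta),r(1+\delta)])$ genuinely contributes a term of size $\delta\,\Phi(r)$, not $o(\Phi(r))$, so it must be combined with the first error term $\cO(\delta|\ln\delta|^2)\Phi(r)$ rather than discarded; the $|\ln\delta|^2$ factor is then what dominates, and the conclusion is unaffected.
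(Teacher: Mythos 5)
Your proposal is correct and follows essentially the same route as the paper: both work directly from Theorem \ref{a17b}, establish that $\Phi(r(1\pm\delta)) = \Phi(r)(1 + o(1) + \cO(\delta))$ for each of the three model functions, deduce $n([r(1-\delta),r(1+\delta)]) = n([r,1])(o(1)+\cO(\delta))$, and then use the divergence of $\Phi$ to absorb $\cO_\delta(1)$ before sending $\delta\searrow 0$. Your extra emphasis on the order of quantifiers (fix $\delta$, take $\limsup$/$\liminf$ in $r$, then send $\delta\to 0$) and on the fact that the polynomial case produces a genuinely $\delta$-sized relative error rather than an $o(1)$ one is a useful clarification of what the paper states more tersely as a single $\Phi(r)(1+o(1))$ conclusion.
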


These results are proved in Section \ref{s4}.

\begin{remark}\sl
As shown in Section \ref{s7}, the assumptions of Theorem \ref{a17} and Theorem \ref{a17b} (compactness of $A (z)$, selfadjointness of $A(0)$, and invertibility of $I-A^{\prime}(0)\Pi_0$) are necessary in order to have the claimed results.
\end{remark}

\section{Proof of Theorem \ref{a17}}\label{s3}

Throughout the section we assume that the hypotheses of Theorem \ref{a17} are fulfilled. Note that it is enough to prove this result for $\delta$ small enough. In the following, $C$ will denote a positive constant independent of $\delta , s , z$ which may change its value from line to line.

\Subsection{Estimate of the resolvents}

For $0 < \delta \leq \delta_{0} : = \min ( 1 , \dist (
\partial \Omega \cap \R , \{ 0 \} ) / 2 )$ and $s > 0$, let us
introduce the  finite rank operator
\begin{equation} \label{a21}
K_{\delta} (s) : = A(0) \one_{I_\delta} \Big( \frac{A(0)}{s} \Big) ,
\end{equation}
where $I_{\delta} = \partial \Omega \cap \R + [- \delta , \delta ]$. This operator will be used to ``remove'' the eigenvalues of $A (0)$ which are at distance $\delta s$ from $\partial \Omega \cap \R$. We also define the set
\begin{equation*}
V_{\nu , \delta} := ( \Omega + B (0 , \nu ) ) \cap \{ \vert \im z
\vert > \varepsilon \delta \} \qquad \text{ and } \qquad W_{\delta} := ( \partial \Omega \cap \R ) + B (0 , \delta ) ,
\end{equation*}
where $\nu >0$ is a constant chosen sufficiently small so that $\Omega + B (0 , \nu ) \Subset \C \setminus \{ 0 \}$ and $\varepsilon \in ] 0 ,1/2 [$ will be chosen later so that $\partial \Omega \subset V_{\nu /2 , 2 \delta} \cup W_{\delta /4}$ . For $\delta > 0$ small enough, the setting is illustrated  in Figure \ref{f2}. First, we obtain estimates on the free and perturbed resolvents which hold uniformly with respect to $\delta$.

\begin{figure}
\begin{center}
\begin{picture}(0,0)%
\includegraphics{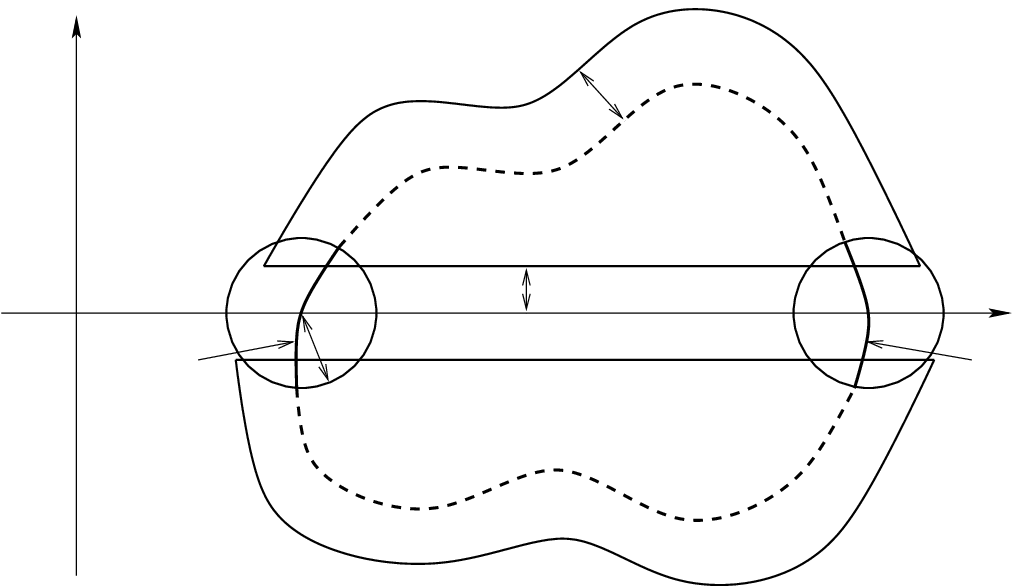}%
\end{picture}%
\setlength{\unitlength}{1184sp}%
\begingroup\makeatletter\ifx\SetFigFont\undefined%
\gdef\SetFigFont#1#2#3#4#5{%
  \reset@font\fontsize{#1}{#2pt}%
  \fontfamily{#3}\fontseries{#4}\fontshape{#5}%
  \selectfont}%
\fi\endgroup%
\begin{picture}(16244,9280)(-2421,-9546)
\put(151,-6061){\makebox(0,0)[lb]{\smash{{\SetFigFont{9}{10.8}{\rmdefault}{\mddefault}{\updefault}$\Gamma_{\delta}^{1}$}}}}
\put(7351,-1561){\makebox(0,0)[lb]{\smash{{\SetFigFont{9}{10.8}{\rmdefault}{\mddefault}{\updefault}$\nu$}}}}
\put(8401,-3211){\makebox(0,0)[lb]{\smash{{\SetFigFont{9}{10.8}{\rmdefault}{\mddefault}{\updefault}$V_{\nu , \delta}$}}}}
\put(8401,-7336){\makebox(0,0)[lb]{\smash{{\SetFigFont{9}{10.8}{\rmdefault}{\mddefault}{\updefault}$V_{\nu , \delta}$}}}}
\put(2626,-5011){\makebox(0,0)[lb]{\smash{{\SetFigFont{9}{10.8}{\rmdefault}{\mddefault}{\updefault}$x_{1}$}}}}
\put(11551,-5011){\makebox(0,0)[lb]{\smash{{\SetFigFont{9}{10.8}{\rmdefault}{\mddefault}{\updefault}$x_{2}$}}}}
\put(1576,-4936){\makebox(0,0)[lb]{\smash{{\SetFigFont{9}{10.8}{\rmdefault}{\mddefault}{\updefault}$W_{\delta}^{1}$}}}}
\put(10501,-4936){\makebox(0,0)[lb]{\smash{{\SetFigFont{9}{10.8}{\rmdefault}{\mddefault}{\updefault}$W_{\delta}^{2}$}}}}
\put(2776,-5761){\makebox(0,0)[lb]{\smash{{\SetFigFont{9}{10.8}{\rmdefault}{\mddefault}{\updefault}$\delta$}}}}
\put(-824,-1036){\makebox(0,0)[lb]{\smash{{\SetFigFont{9}{10.8}{\rmdefault}{\mddefault}{\updefault}$\C$}}}}
\put(7951,-5761){\makebox(0,0)[lb]{\smash{{\SetFigFont{9}{10.8}{\rmdefault}{\mddefault}{\updefault}$\Omega$}}}}
\put(6226,-4936){\makebox(0,0)[lb]{\smash{{\SetFigFont{9}{10.8}{\rmdefault}{\mddefault}{\updefault}$\varepsilon \delta$}}}}
\put(13201,-6061){\makebox(0,0)[lb]{\smash{{\SetFigFont{9}{10.8}{\rmdefault}{\mddefault}{\updefault}$\Gamma_{\delta}^{2}$}}}}
\end{picture}%
\caption{The domains $V_{\nu , \delta}$ and $W_{\delta} = W_{\delta}^{1} \cup W_{\delta}^{2}$, the set $\partial \Omega \cap \R = \{ x_{1} , x_{2} \}$ and the paths $\Gamma_{\delta} = \Gamma_{\delta}^{1} \cup \Gamma_{\delta}^{2}$ with $\Gamma_{\delta}^{j} = \partial \Omega \cap B ( x_{j} , \delta )$.} \label{f2}
\end{center}
\end{figure}

\begin{lemma}\sl \label{a4}
There exists $C >0$ such that, for all $0 < \delta \leq \delta_{0}$, there exists $s ( \delta ) > 0$ such that, for all $0 < s < s ( \delta )$, we have
\begin{equation} \label{a2}
\Big\Vert \Big(I - \frac{A(s z)-K_\delta(s)}{s z}\Big)^{-1} \Big\Vert < \frac{C}{\delta + \vert \im z \vert} ,
\end{equation}
uniformly for $z \in V_{\nu , \delta} \cup W_{\delta /2}$ and
\begin{equation} \label{a3}
\Big\Vert \Big(I - \frac{A(s z)}{s z}\Big)^{-1} \Big\Vert < \frac{C}{\delta + \vert \im z \vert} ,
\end{equation}
uniformly for $z \in V_{\nu , \delta}$.
\end{lemma}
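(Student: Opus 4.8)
The plan is to prove Lemma~\ref{a4} by a resolvent-perturbation argument, treating $I - \frac{A(sz)}{sz}$ (or its $K_\delta$-modified version) as a perturbation of the model operator $I - \frac{A(0)}{sz}$, whose inverse is explicitly controllable via the spectral theorem applied to the selfadjoint operator $A(0)$. First I would establish the baseline estimate: since $A(0) = A(0)^{*}$, for $z = x + iy$ with $x \neq 0$ one has, by the spectral theorem, $\big\| (I - \frac{A(0)}{sz})^{-1} \big\| = \sup_{\lambda \in \sigma(A(0))} \big| \frac{sz}{sz - \lambda} \big| = \sup_\lambda \big| \frac{z}{z - \lambda/s} \big|$. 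A geometric estimate shows this supremum is $\lesssim 1 + |z|/|\im z|$ off the real axis and, after removing the eigenvalues of $A(0)/s$ lying within distance $\delta$ of $\partial\Omega \cap \R$ via the spectral projection $\one_{I_\delta}(A(0)/s)$ (this is exactly the role of $K_\delta(s)$ in \eqref{a21}), one gets $\big\| (I - \frac{A(0) - K_\delta(s)}{sz})^{-1} \big\| \lesssim \frac{1}{\delta + |\im z|}$ uniformly for $z \in V_{\nu,\delta} \cup W_{\delta/2}$, since in $V_{\nu,\delta}$ one has $|\im z| > \varepsilon\delta$ anyway, and in $W_{\delta/2}$ the real part $x$ is close to $\partial\Omega \cap \R$ so that all relevant eigenvalues $\lambda/s$ have been projected out, leaving $z - \lambda/s$ bounded below by a constant times $\delta$ (here is where the transversality of $\partial\Omega$ to $\R$ and the fact that $I_\delta$ removes a full $\delta$-neighborhood of $\partial\Omega \cap \R$ are used).

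Next I would control the perturbation. Write $A(sz) = A(0) + sz\, B(sz)$ where $B(sz) := \frac{A(sz) - A(0)}{sz}$ is holomorphic on $\D$ (removable singularity at $0$) with $B(0) = A'(0)$, hence $\|B(sz)\| \leq \|A'(0)\| + o(1)$ is uniformly bounded on $\{|z| \le \text{const}\}$ for $s$ small. Then
\begin{equation*}
I - \frac{A(sz) - K_\delta(s)}{sz} = \Big( I - \frac{A(0) - K_\delta(s)}{sz} \Big) - B(sz) .
\end{equation*}
Factoring out the invertible first term, invertibility of the left side follows from a Neumann series once $\| (I - \frac{A(0) - K_\delta(s)}{sz})^{-1} B(sz) \| < 1/2$, say. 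But the baseline bound only gives $\frac{C}{\delta + |\im z|}$ for the resolvent norm, and $\|B(sz)\|$ is merely bounded, so this naive estimate is not small. This is the main obstacle, and resolving it requires a finer structural observation: the dangerous directions are those on which $A(0)$ is small, i.e. essentially $\Pi_0$. One must therefore split $B(sz)$ using $I = \Pi_0 + \overline\Pi_0$ and notice that the composition $(I - \frac{A(0)}{sz})^{-1} \Pi_0 = \frac{sz}{sz} \Pi_0 = \Pi_0$ wait — rather, $(I - \frac{A(0)}{sz})^{-1}$ acts as the identity on $\Ran\Pi_0$, so that the genuinely singular contribution of $B$ is $(I - \frac{A(0)}{sz})^{-1} B(sz) = \Pi_0 B(sz) \Pi_0 + (\text{terms that are } \cO(1/\text{something better}))$; more precisely one isolates $\Pi_0 A'(0) \Pi_0$, and the hypothesis that $I - A'(0)\Pi_0$ is invertible is exactly what is needed to absorb this leading piece and close a perturbed Neumann series.

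Concretely, I would first invert $I - \frac{A(sz)}{sz}$ restricted modulo $\Ran\Pi_0$: on $\overline\Pi_0 \cH$ the operator $A(0)$ is bounded below (away from $0$) so $(I - \frac{A(0)}{sz})^{-1}\overline\Pi_0$ is genuinely small in norm, $\cO(|z|)$, once the eigenvalues near $\partial\Omega \cap \R$ are excised; then handle the $\Ran\Pi_0$ block using a Schur complement / Grushin reduction, where the effective operator is $I - A'(0)\Pi_0$ up to $\cO(s)$ and $\cO(\delta)$ corrections, invertible by hypothesis. Combining the two blocks and tracking the worst factor $\frac{1}{\delta + |\im z|}$ coming from the off-real-axis part of $V_{\nu,\delta}$ yields \eqref{a2}; estimate \eqref{a3} follows from \eqref{a2} by noting that on $V_{\nu,\delta}$ the removed operator $K_\delta(s)/(sz) = \frac{A(0)}{sz}\one_{I_\delta}(A(0)/s)$ has norm $\cO(1/|\im z|)$ applied after the resolvent and can be re-incorporated by a further Neumann series, or alternatively by running the same argument directly without the $K_\delta$ modification since on $V_{\nu,\delta}$ we have $|\im z| > \varepsilon\delta$ and no eigenvalues need removing. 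Throughout, $s(\delta)$ is chosen so that all $o(1)$ and $\cO(s)$ error terms coming from the holomorphy of $A$ are dominated by fixed fractions of $\delta$, which is what forces $s(\delta) \to 0$ as $\delta \to 0$.
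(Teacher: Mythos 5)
Your high-level plan matches the paper's: start from the spectral-theorem estimate \eqref{a6} for $\big(I - \frac{A(0)-K_\delta(s)}{sz}\big)^{-1}$, factor out this model resolvent via the decomposition \eqref{a5}, split the correction factor along $\Pi_0 \oplus \overline{\Pi}_0$, and close the argument using the invertibility of $I - A'(0)\Pi_0$. Your observation that $(I - \frac{A(0)-K_\delta(s)}{sz})^{-1}$ acts as the identity on $\Ran\Pi_0$ and your identification of $I - A'(0)\Pi_0$ as the leading piece are both exactly the structure of equation \eqref{a25}.

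However, there is a genuine gap in how you propose to kill the $\overline{\Pi}_0$-contribution. You assert that on $\overline{\Pi}_0\cH$ the operator $A(0)$ is ``bounded below (away from $0$)'' so that $(I-\frac{A(0)}{sz})^{-1}\overline{\Pi}_0$ is ``genuinely small in norm, $\cO(|z|)$, once the eigenvalues near $\partial\Omega\cap\R$ are excised.'' This fails precisely in the case of interest, namely when $\rank A(0) = \infty$: the nonzero eigenvalues of $A(0)$ accumulate at $0$, so $A(0)$ is injective on $\overline{\Pi}_0\cH$ but not bounded below by any positive constant, and the projection $\one_{I_\delta}(A(0)/s)$ removes only eigenvalues of $A(0)/s$ near $\partial\Omega\cap\R$, not eigenvalues of $A(0)$ near $0$. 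Concretely, eigenvalues $\lambda$ of $A(0)$ with $0 < |\lambda| \ll s$ contribute eigenvalues $\frac{sz}{sz-\lambda} \approx 1$ to $(I-\frac{A(0)-K_\delta(s)}{sz})^{-1}\overline{\Pi}_0$, so this operator has norm bounded below by something close to $1$ and your Neumann series does not converge via a crude norm bound. What rescues the argument in the paper is a different mechanism: the operator $\overline{\Pi}_0\big(I - \frac{A(0)-K_\delta(s)}{sz}\big)^{-1}$ has \emph{adjoint} tending strongly (not in norm) to $0$ as $s\to 0$ (this is \eqref{a8}, proved by splitting the spectrum at $|\lambda| = Ms$), and only the \emph{compactness} of $A'(0)$ then upgrades this to norm convergence of $A'(0)\overline{\Pi}_0\big(I-\frac{A(0)-K_\delta(s)}{sz}\big)^{-1}\to 0$. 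Your proposal does not invoke the compactness of $A'(0)$ at this point, and without it the argument has no way to close. (Indeed, Proposition \ref{lems7}\,$i)$ shows the conclusion is \emph{false} if one drops compactness of $A(z)$, so any correct proof must use it.) You should replace the ``$\cO(|z|)$'' claim by the strong-convergence-plus-compactness argument, at which point the rest of your outline goes through.
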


\begin{proof}
We begin by proving \eqref{a2}. First, we make the following decomposition
\begin{equation} \label{a5}
I - \frac{A(s z)-K_\delta(s)}{s z} = \Big(I - \frac{A(s z)-A(0)}{s z} \Big(I - \frac{A(0)-K_\delta(s)}{s z}\Big)^{-1}\Big) \Big(I -\frac{A(0)-K_\delta(s)}{s z}\Big) .
\end{equation}
From the definition \eqref{a21} of $K_\delta(s)$ and the spectral theorem, we get
\begin{equation} \label{a6}
\Big\Vert \Big(I -\frac{A(0)-K_\delta(s)}{s z}\Big)^{-1} \Big\Vert \leq \frac{C}{\delta + \vert \im z \vert} ,
\end{equation}
uniformly for $0 < \delta \leq \delta_{0}$, $s \in ]0 ,1]$ and $z \in V_{\nu , \delta} \cup W_{\delta /2}$.

We will now prove that, for $\delta$ fixed,
\begin{equation} \label{a8}
\slim_{s \to 0} \Big( \overline{\Pi}_{0} \Big(I -\frac{A(0)-K_\delta(s)}{s z}\Big)^{-1} \Big)^{*} = 0 ,
\end{equation}
uniformly for $z \in V_{\nu , \delta} \cup W_{\delta /2}$. For $\alpha >0$ fixed, according to the spectral theorem, there exists $M>0$ such that
\begin{equation*}
\Big\Vert \one_{\{ \vert \lambda \vert \geq M s \}} (A(0)) \Big(I - \frac{A(0) - K_\delta(s)}{s z}\Big)^{-1} \Big\Vert < \alpha ,
\end{equation*}
uniformly with respect to $s \in]0,1]$ and $z \in V_{\nu , \delta} \cup W_{\delta /2}$. On the other hand, the    projection $\one_{ \{0 < \vert \lambda \vert < M s \}} ( A (0) )$ tends strongly to $0$ as $s$ tends to $0$. Then, using \eqref{a6}, we deduce that, for $\delta$ fixed,
\begin{equation*}
\slim_{s \to 0} \Big( \one_{ \{ 0 < \vert \lambda \vert < M s \}} ( A (0) ) \Big( I - \frac{A(0) - K_\delta(s)}{s z} \Big)^{-1} \Big)^{*} = 0 ,
\end{equation*}
uniformly with respect to $z \in V_{\nu , \delta} \cup W_{\delta /2}$. The two last equations imply \eqref{a8}.

Since $A$ is holomorphic near $0$, there exists a holomorphic  operator-valued function $R_2$ such that
\begin{equation*}
\frac{A(s z)-A(0)}{s z}=A^\prime(0) + s z R_2(s z) .
\end{equation*}
Then
\begin{align}
I - \frac{A(s z)-A(0)}{s z} \Big(I - {}& \frac{A(0)-K_\delta(s)}{s z}\Big)^{-1}   \nonumber \\
&= I-A^\prime(0){\Pi}_0- \big( A^\prime(0)\overline{\Pi}_{0}+ s z R_2(s z) \big) \Big(I -\frac{A(0)-K_\delta(s)}{s z}\Big)^{-1}.   \label{a25}
\end{align}
Exploiting \eqref{a6}, \eqref{a8}, $s z R_2(s z) = \cO (s)$, and the compactness of
$A^\prime (0)$, we deduce that, for $\delta$
fixed, the norm of the last term at the right hand side of \eqref{a25}  tends to $0$ as $s \to 0$, uniformly with respect to $z \in V_{\nu , \delta} \cup W_{\delta /2}$. At last, $I - A^\prime(0) \Pi_0$ is invertible by assumption. Then there exists $C>0$ such that, for all $0 < \delta \leq \delta_{0}$, we can choose $s ( \delta ) > 0$ sufficiently small such that
\begin{equation} \label{a7}
\Big\Vert \Big( I - \frac{A(s z)-A(0)}{s z} \Big(I - \frac{A(0)-K_\delta(s)}{s z}\Big)^{-1} \Big)^{-1} \Big\Vert < C ,
\end{equation}
uniformly for $0 < s < s ( \delta )$ and $z \in V_{\nu , \delta} \cup W_{\delta /2}$. Then, \eqref{a2} follows from \eqref{a5}, \eqref{a6} and \eqref{a7}.

The proof of \eqref{a3} is similar. Instead of \eqref{a6}, we use
\begin{equation*}
\Big\Vert \Big(I -\frac{A(0)}{s z}\Big)^{-1} \Big\Vert \leq \frac{s \vert z \vert}{s \vert \im z \vert} \leq \frac{1}{\delta + \vert \im z \vert} ,
\end{equation*}
uniformly for $0 < \delta \leq \delta_{0}$, $s \in ]0 ,1]$ and $z \in V_{\nu , \delta}$.
\end{proof}

\Subsection{Reduction of the problem}

By the definition of the multiplicity of the characteristic values \eqref{c17} and of the index of an operator \eqref{c20}, we have the following

\begin{lemma}\sl \label{a20}
Assume that $s \Omega \subset \D$ and that there are  no characteristic values of $I - A (z) / z$ on $s \partial \Omega$. Then,
\begin{equation*}
\cN ( s \Omega ) = \Ind_{\partial \Omega} \Big( I - \frac{A ( s z )}{s z} \Big) .
\end{equation*}
\end{lemma}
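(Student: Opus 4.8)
The plan is to relate the counting function $\cN(s\Omega)$ to the index of the operator-valued function $z \longmapsto I - A(sz)/(sz)$ with respect to the fixed contour $\partial\Omega$, via the definitions from Section~\ref{a19}. First I would perform the change of variables $z = sw$, which maps $s\Omega$ onto $\Omega$ and $s\partial\Omega$ onto $\partial\Omega$. Under this substitution the operator function $w \longmapsto F(w) := I - A(sw)/(sw)$ is holomorphic on a neighborhood of $\overline{\Omega}$ away from the origin; since $\Omega \Subset \C\setminus\{0\}$, the origin does not belong to $\overline\Omega$, so $F$ is in fact holomorphic (not merely finite meromorphic) on a neighborhood of $\overline\Omega$, and it differs from the identity by a compact operator, hence is Fredholm of index zero at every point. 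Thus the hypotheses of Proposition~\ref{c21} are met (an invertible point exists by the analytic Fredholm alternative, provided $F$ is not singular identically, which follows since $A(sw)/(sw) \to 0$ in norm as $\re w \to \pm\infty$ within $\D$, or more simply from the assumption that there are no characteristic values on $\partial\Omega$ together with a connectedness argument).

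Next I would observe that the characteristic values of $F$ inside $\Omega$ are, by definition, the points $w \in \Omega$ where $F$ or $F^{-1}$ fails to be holomorphic, and these correspond bijectively (with preserved multiplicities, since multiplicity is defined by the contour integral \eqref{c17} which transforms covariantly under the biholomorphism $w \mapsto sw$) to the characteristic values of $I - A(z)/z$ inside $s\Omega$, i.e. to the elements of $\cZ(s\Omega)$ counted with multiplicity. Hence $\cN(s\Omega) = \#\cZ(s\Omega)$ equals the sum of the multiplicities of the characteristic values of $F$ in $\Omega$. By the discussion preceding Theorem~\ref{a18}, since $\partial\Omega \cap Z' = \emptyset$ (which is exactly the hypothesis that there are no characteristic values on $s\partial\Omega$), this sum equals the index $\Ind_{\partial\Omega} F$ given by the trace of the contour integral in \eqref{c20}. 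Combining these identifications yields
\begin{equation*}
\cN(s\Omega) = \Ind_{\partial\Omega} F = \Ind_{\partial\Omega}\Big( I - \frac{A(sz)}{sz} \Big),
\end{equation*}
which is the claim.

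The only genuine point requiring care is verifying that the notion of multiplicity \eqref{c17} is invariant under the change of variables, i.e. that if $G(w) = F(sw)$ then $\frac{1}{2i\pi}\tr\oint_{|w-w_0|=\rho} G'(w)G(w)^{-1}\,dw = \frac{1}{2i\pi}\tr\oint_{|z-sw_0|=s\rho} F'(z)F(z)^{-1}\,dz$; this is immediate from the chain rule $G'(w) = sF'(sw)$ and the substitution $z = sw$, $dz = s\,dw$, since the factors of $s$ cancel and the circle $|w-w_0|=\rho$ is carried to the circle $|z-sw_0|=s\rho$. I expect no serious obstacle: the entire statement is essentially a bookkeeping consequence of the definitions in Section~\ref{a19}, and the substance of Theorem~\ref{a17} lies in the subsequent analysis (Lemma~\ref{a4} and the resolvent estimates), not in Lemma~\ref{a20} itself. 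If one prefers to avoid the explicit change of variables, one can equally well argue directly on $s\Omega$: apply Definition~\ref{c16} and equation \eqref{c20} verbatim to $F(z) = I - A(z)/z$ with contour $s\partial\Omega$, noting that the hypothesis rules out characteristic values on the contour so the index is well defined and counts exactly $\cN(s\Omega)$.
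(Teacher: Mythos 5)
Your proposal is correct and takes essentially the same route as the paper, which simply states that the lemma follows directly from the definitions of multiplicity \eqref{c17} and of the index \eqref{c20}. Your additional verification that the change of variables $z = sw$ preserves multiplicities and carries $\cZ(s\Omega)$ bijectively onto the characteristic values of $w \mapsto I - A(sw)/(sw)$ in $\Omega$ is exactly the bookkeeping the paper leaves implicit.
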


Note that \eqref{a3} implies Corollary \ref{a22} $i)$. Therefore, since the  characteristic values form a discrete set, the assumptions of Lemma \ref{a20} are satisfied for almost all $s$ small enough. Moreover, from the statement of Theorem \ref{a17}, it is enough to prove it for almost all $s$ small enough. Then, we  may always assume in the  sequel that the assumptions of Lemma \ref{a20} are satisfied.

Now, we define
\begin{align}
f_{\delta} (s,z) &= \det \Big( \Big( I - \frac{A ( s z )}{s z} \Big) \Big(I - \frac{A (s z) - K_{\delta} (s)}{s z} \Big)^{-1} \Big)   \nonumber \\
&= \det \Big( I - \frac{K_{\delta} (s)}{s z} \Big(I - \frac{A (s z) - K_{\delta} (s)}{s z} \Big)^{-1} \Big) ,  \label{a27}
\end{align}
the relative determinant which is well defined for $z \in V_{\nu ,
\delta} \cup W_{\delta /2}$ by Lemma \ref{a4} and the finiteness
of ${\rm rank}\,K_{\delta} (s)$.

\begin{lemma}\sl \label{a23}
For all $0 < \delta \leq \delta_{0}$, there exists $s ( \delta ) > 0$ such that, for all $0 < s < s ( \delta )$,
\begin{equation*}
\Ind_{\partial \Omega} \Big( I - \frac{A(s z)}{s z} \Big) = \Ind_{\partial \Omega} \Big( I - \frac{A(s z) - K_{\delta} (s)}{s z} \Big) + \ind_{\partial \Omega} f_{\delta} (s , z ) ,
\end{equation*}
where the index of a holomorphic function is defined in \eqref{d1}.
\end{lemma}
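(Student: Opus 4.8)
The plan is to use the algebraic factorization built into the definition \eqref{a27} of $f_\delta$, together with the multiplicativity of the index \eqref{gr4} and the identification \eqref{gr5} of the index with the winding number of a Fredholm determinant. Fix $0<\delta\le\delta_0$ and choose $s(\delta)>0$ so small that for $0<s<s(\delta)$ one has $s\overline\Omega\subset\D$ and the conclusions of Lemma \ref{a4} hold. Work on a fixed open connected neighbourhood $\cU$ of $\overline\Omega$ with $\cU\Subset\C\setminus\{0\}$. On $\cU$ the three functions
\[
F_2(z) := I - \frac{A(sz)-K_\delta(s)}{sz}, \qquad F(z):= I - \frac{A(sz)}{sz}, \qquad F_1(z):= F(z)F_2(z)^{-1}
\]
are holomorphic away from a discrete set (there is no pole at $z=0$ since $0\notin\overline\Omega$) and of the form identity-plus-compact, hence finite meromorphic and Fredholm at every point of $\cU$; by \eqref{a2}, $F_2$ is invertible on $V_{\nu,\delta}\cup W_{\delta/2}$, which contains $\partial\Omega$ (since $\partial\Omega\subset V_{\nu/2,2\delta}\cup W_{\delta/4}\subset V_{\nu,\delta}\cup W_{\delta/2}$), so $F_2$ is invertible somewhere on $\cU$ and, by Proposition \ref{c21}, $F_2^{-1}$ is finite meromorphic and Fredholm on $\cU$; thus $F_1$ is too. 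By the reduction following Lemma \ref{a20} we may assume $F$ has no characteristic value on $\partial\Omega$, and combined with the invertibility of $F_2$ on $\partial\Omega$ this shows $F_1$ is holomorphic and invertible on $\partial\Omega$. Hence all three functions satisfy the hypotheses of Proposition \ref{c21} on $\cU$ and $\partial\Omega$ is free of characteristic values of each, so the three indices $\Ind_{\partial\Omega}F$, $\Ind_{\partial\Omega}F_1$, $\Ind_{\partial\Omega}F_2$ are well defined.

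Next, from the identity $F=F_1F_2$ and \eqref{gr4} I get $\Ind_{\partial\Omega}F=\Ind_{\partial\Omega}F_1+\Ind_{\partial\Omega}F_2$, which is the right-hand side of the asserted equality once $\Ind_{\partial\Omega}F_1=\ind_{\partial\Omega}f_\delta(s,\cdot)$ is established. For this, the second line of \eqref{a27} gives
\[
F_1(z) = I - \frac{K_\delta(s)}{sz}\,F_2(z)^{-1} ,
\]
and since $\rank K_\delta(s)<\infty$ the operator $I-F_1(z)$ is of finite rank, in particular trace class, wherever defined, and $f_\delta(s,z)=\det F_1(z)$ is precisely its (finite-rank) Fredholm determinant. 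Thus \eqref{gr5} applies and yields $\Ind_{\partial\Omega}F_1=\ind_{\partial\Omega}\det F_1(s,\cdot)=\ind_{\partial\Omega}f_\delta(s,\cdot)$. Combining the two displays proves the lemma.

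The only genuine work is the bookkeeping in the first paragraph: verifying the hypotheses of Proposition \ref{c21} on a neighbourhood of $\overline\Omega$ for all three operator functions, and above all that $\partial\Omega$ carries no characteristic value of $F$ or of $F_2$. For $F$ this has already been arranged in the reduction after Lemma \ref{a20}, and for $F_2$ it is exactly what estimate \eqref{a2} of Lemma \ref{a4} delivers. Everything else in the argument is formal, and no new estimates are required.
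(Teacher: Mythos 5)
Your proof is correct and follows essentially the same route as the paper's: factor $I-\frac{A(sz)}{sz}$ as $\bigl(I-\frac{K_\delta(s)}{sz}F_2(z)^{-1}\bigr)F_2(z)$, apply the multiplicativity \eqref{gr4} of the index, and use \eqref{gr5} to identify the index of the first (finite-rank-perturbation-of-identity) factor with the winding number of $f_\delta(s,\cdot)=\det F_1$. The paper's argument is terser — it simply writes the factorization and invokes \eqref{gr4}--\eqref{gr5} — while you spell out the verification that all three operator functions satisfy the hypotheses of Proposition \ref{c21} and that $\partial\Omega$ is free of characteristic values of each; this is bookkeeping the paper leaves implicit (as it has already arranged the assumptions of Lemma \ref{a20} and has Lemma \ref{a4} available), so the two proofs are substantively the same.
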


\begin{proof}
Remark that all the quantities are well defined on $\partial \Omega$ since we have assumed the hypotheses of Lemma \ref{a20}. We have
\begin{equation*}
I - \frac{A (s z)}{s z} = I - \frac{A (s z) - K_{\delta} (s)}{s z} - \frac{K_{\delta} (s)}{s z} ,
\end{equation*}
and we can then write
\begin{equation*}
I - \frac{A (s z)}{s z} = \Big( I - \frac{K_{\delta} (s)}{s z} \Big(I - \frac{A (s z)- K_{\delta} (s)}{s z} \Big)^{-1} \Big) \Big( I - \frac{A (s z) - K_{\delta} (s)}{s z} \Big) .
\end{equation*}
Thus we deduce the lemma from \eqref{gr4}--\eqref{gr5}.
\end{proof}

\begin{lemma}\sl \label{a24}
For all $0 < \delta \leq \delta_{0}$, there exists $s ( \delta ) > 0$ such that, for all $0 < s < s ( \delta )$,
\begin{equation*}
\Ind_{\partial \Omega} \Big( I - \frac{A(s z) - K_{\delta} (s)}{s z} \Big) = \Ind_{\partial \Omega} \Big( I - \frac{A (0) - K_{\delta} (s)}{s z} \Big) .
\end{equation*}
\end{lemma}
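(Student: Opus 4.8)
The plan is to establish Lemma~\ref{a24} by a Rouch\'e-type deformation argument, interpolating between the operator $I - \frac{A(sz) - K_\delta(s)}{sz}$ and $I - \frac{A(0) - K_\delta(s)}{sz}$. Write $B_t(s,z) := I - \frac{A(0) + t(A(sz) - A(0)) - K_\delta(s)}{sz}$ for $t \in [0,1]$, so that $B_0$ is the operator whose index appears on the right and $B_1$ is the one on the left. For each fixed $t$, the operator $B_t$ differs from $B_0$ by the perturbation $G_t(s,z) := -\frac{t(A(sz) - A(0))}{sz} = -t\big(A'(0) + sz R_2(sz)\big)$, where $R_2$ is the holomorphic remainder introduced in the proof of Lemma~\ref{a4}. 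The idea is to verify that $\|B_0(s,z)^{-1} G_t(s,z)\| < 1$ on $\partial\Omega$ uniformly in $t \in [0,1]$ for $s$ small enough, and then invoke Theorem~\ref{a18} (the Rouch\'e theorem for holomorphic operator functions) to conclude that $\Ind_{\partial\Omega} B_t$ is independent of $t$; specializing to $t = 0$ and $t = 1$ gives the claim.

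The key technical input is an estimate of the form $\|B_0(s,z)^{-1}\| \le C/(\delta + |\im z|)$ on $\partial\Omega$, which follows exactly as \eqref{a6} in the proof of Lemma~\ref{a4}: by the definition \eqref{a21} of $K_\delta(s)$ and the spectral theorem, the spectrum of $(A(0) - K_\delta(s))/s$ is bounded away from $\partial\Omega \cap \R$ at distance $\gtrsim \delta$, so the resolvent $(I - (A(0)-K_\delta(s))/(sz))^{-1}$ is controlled by $C/(\delta + |\im z|)$ uniformly on $V_{\nu,\delta} \cup W_{\delta/2} \supset \partial\Omega$ (recall $\partial\Omega \subset V_{\nu/2, 2\delta} \cup W_{\delta/4}$ by the choice of $\varepsilon$). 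On the other hand, $\|G_t(s,z)\| \le \|A'(0)\| + \cO(s)$, which is $\cO(1)$ but does \emph{not} tend to zero, so a naive product bound fails on the part of $\partial\Omega$ where $|\im z|$ is bounded below but the factor $1/(\delta + |\im z|)$ is only $\cO(1/\delta)$ — and even there, it fails near $\partial\Omega \cap \R$ where $|\im z| \sim \delta$. Hence the main obstacle: the crude bound $\|B_0^{-1}\|\,\|G_t\| < 1$ is simply false, and one must instead exploit the compactness of $A'(0)$ together with the strong convergence \eqref{a8} to split $A'(0)$ (and $A'(0)\overline\Pi_0$) into a finite-rank part plus a small-norm part, and absorb the finite-rank part using the invertibility of $I - A'(0)\Pi_0$, exactly in the spirit of \eqref{a25}--\eqref{a7}.

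Concretely, I would mimic the decomposition \eqref{a5}: write
\begin{equation*}
B_t(s,z) = \Big( I - \frac{t(A(sz) - A(0))}{sz}\, B_0(s,z)^{-1} \Big) B_0(s,z),
\end{equation*}
and observe that the first factor equals $I - tA'(0)\Pi_0 - \big(tA'(0)\overline\Pi_0 + t\, sz\, R_2(sz)\big) B_0(s,z)^{-1}$. Since $I - tA'(0)\Pi_0$ is invertible for all $t \in [0,1]$ (its inverse is norm-bounded uniformly in $t$, as $A'(0)\Pi_0$ is compact and $1$ is not an eigenvalue — this needs a brief justification, e.g. the function $t\mapsto (I - tA'(0)\Pi_0)^{-1}$ is continuous on the compact set $[0,1]$, using that $I - A'(0)\Pi_0$ invertible forces $I - tA'(0)\Pi_0$ invertible for $t$ near $1$ and triviality near $t=0$, plus analytic Fredholm theory to rule out eigenvalues in between), and since $tA'(0)\overline\Pi_0\, B_0^{-1} \to 0$ and $t\,sz\,R_2(sz)\,B_0^{-1} = \cO(s)$ as $s \to 0$ uniformly on $\partial\Omega$ (by \eqref{a6}, \eqref{a8}, and compactness of $A'(0)$, all uniform in $t$), the first factor is invertible with uniformly bounded inverse for $s < s(\delta)$. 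Therefore $B_t(s,z)$ is a product of two invertible operators on $\partial\Omega$, so by the multiplicativity \eqref{gr4} of the index,
\begin{equation*}
\Ind_{\partial\Omega} B_t = \Ind_{\partial\Omega}\Big( I - \frac{t(A(sz)-A(0))}{sz}\, B_0^{-1} \Big) + \Ind_{\partial\Omega} B_0.
\end{equation*}
The first index on the right is the index of a holomorphic operator function that stays invertible on $\partial\Omega$ — hence it has no characteristic values inside $\Omega$ and the index is zero (alternatively, apply Theorem~\ref{a18} with $F = I - tA'(0)\Pi_0$ and $G$ the remaining small-norm part). Taking $t = 1$ versus $t = 0$ then yields $\Ind_{\partial\Omega} B_1 = \Ind_{\partial\Omega} B_0$, which is precisely the assertion of the lemma. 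The only real work is checking that all the limits above are uniform in $z \in \partial\Omega$ (not just on fixed compacts away from $\R$), which is already granted by Lemma~\ref{a4} and its proof, and uniform in $t \in [0,1]$, which is immediate since $t$ enters linearly and $|t| \le 1$.
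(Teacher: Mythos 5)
Your argument, after the false start with a naive homotopy, reduces to the paper's own proof: the factorization $I - \frac{A(sz)-K_\delta(s)}{sz} = \bigl(I - \frac{A(sz)-A(0)}{sz}B_0^{-1}\bigr)B_0$ with $B_0 = I - \frac{A(0)-K_\delta(s)}{sz}$, the rewriting of the first factor as $I - A'(0)\Pi_0$ plus a uniformly small remainder via \eqref{a25}, and the invocation of Theorem~\ref{a18} with $F = I - A'(0)\Pi_0$. Two assertions along the way are incorrect, however, and should be removed rather than patched, since neither is needed.

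First, the claim that $I - tA'(0)\Pi_0$ is invertible for all $t\in[0,1]$ is false in general: if $A'(0)\Pi_0 = 2P$ with $P$ a rank-one orthogonal projection, then $I - A'(0)\Pi_0$ is invertible (spectrum $\{1,-1\}$) but $I - \tfrac12 A'(0)\Pi_0 = I - P$ is not. Analytic Fredholm theory only says the set of bad $t$ is discrete; it does not keep that set out of $(0,1)$. Fortunately the homotopy parameter is never genuinely used — only $t=1$ (the hypothesis) and the trivial $t=0$ enter — so it should simply be dropped. Second, the claim that the first factor is ``a holomorphic operator function that stays invertible on $\partial\Omega$ — hence it has no characteristic values inside $\Omega$ and the index is zero'' is wrong on both counts: $B_0^{-1}$ has poles inside $\Omega$ at the eigenvalues of $(A(0)-K_\delta(s))/s$, so the first factor is only finite meromorphic there, and in any case invertibility on the boundary does not preclude characteristic values (or poles) in the interior, nor does it by itself force the index to vanish. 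The correct justification is exactly the one you offer parenthetically, and the one the paper uses: apply Theorem~\ref{a18} with $F = I - A'(0)\Pi_0$ (a constant invertible operator, hence $\Ind_{\partial\Omega} F = 0$) and $G$ the remainder, which is finite meromorphic inside $\Omega$ and satisfies $\Vert F^{-1}G\Vert < 1$ on $\partial\Omega$ for $s$ small.
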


\begin{proof}
First, by using the following decomposition
\begin{equation*}
I - \frac{A (s z) - K_{\delta} (s)}{s z} = \Big(I - \frac{A (s z)- A(0)}{s z} \Big(I - \frac{A(0) - K_{\delta} (s)}{s z} \Big)^{-1} \Big) \Big(I - \frac{A(0) - K_{\delta} (s)}{s z} \Big) ,
\end{equation*}
we have
\begin{align}
\Ind_{\partial \Omega} \Big(I - \frac{A (s z) - K_{\delta} (s)}{s z}\Big) ={}& \Ind_{\partial \Omega} \Big( I - \frac{A (0) - K_{\delta} (s)}{s z} \Big)   \nonumber \\
&+ \Ind_{\partial \Omega} \Big( I - \frac{A (s z) - A (0)}{s z} \Big( I - \frac{A (0) - K_{\delta} (s)}{s z} \Big)^{-1} \Big) .   \label{a26}
\end{align}
From \eqref{a25}, we can write
\begin{align*}
I - \frac{A(s z)-A(0)}{s z} \Big(I - {}& \frac{A(0)-K_\delta(s)}{s z}\Big)^{-1}   \\
&= I-A^\prime(0){\Pi}_0- \big( A^\prime(0)\overline{\Pi}_{0}+ s z R_2(s z) \big) \Big(I -\frac{A(0)-K_\delta(s)}{s z}\Big)^{-1}.
\end{align*}
Moreover the discussion below \eqref{a25} shows that the last term of
the above equality tends to $0$ as $s$ tends to $0$ uniformly for $z
\in \partial \Omega$ with $0 < \delta \leq \delta_{0}$ fixed.
 Then, since  $I - A^{\prime} (0) \Pi_{0}$ is invertible, using the Rouch\'e theorem (see Theorem \ref{a18}), we deduce
\begin{equation*}
\Ind_{\partial \Omega} \Big( I - \frac{A (s z) - A (0)}{s z} \Big( I - \frac{A (0) - K_{\delta} (s)}{s z} \Big)^{-1} \Big) = \Ind_{\partial \Omega} \big( I - A^{\prime} (0) \Pi_{0} \big) = 0 .
\end{equation*}
 Combining with \eqref{a26}, this concludes the proof.
\end{proof}

\begin{proposition}\sl \label{a1}
Let us consider the function $f_\delta$ introduced in \eqref{a27}. For all $\delta > 0$ small enough, there exists $s ( \delta ) > 0$ such that, for all $0 < s < s ( \delta )$,
\begin{equation*}
\ind_{\dom} f_{\delta} ( s , \cdot )= \cO \big( n (s I_{\delta} ) \vert \ln \delta \vert^{2} \big) .
\end{equation*}
\end{proposition}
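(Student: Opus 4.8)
The plan is to estimate $\ind_{\dom} f_{\delta}(s,\cdot)$ by bounding the number of zeros of the scalar holomorphic function $f_{\delta}(s,\cdot)$ inside $\Omega$. Recall from \eqref{a27} that
\begin{equation*}
f_{\delta}(s,z)=\det\Big(I-\frac{K_{\delta}(s)}{sz}\Big(I-\frac{A(sz)-K_{\delta}(s)}{sz}\Big)^{-1}\Big),
\end{equation*}
a Fredholm determinant of a perturbation of the identity by an operator of rank at most $n(sI_{\delta})=\rank K_{\delta}(s)$. The natural tool is the standard Jensen-type inequality: for a holomorphic function $g$ on a neighborhood of $\overline{\Omega}$ that does not vanish at some fixed interior point, the number of zeros of $g$ in $\Omega$ is $\cO\big(\log\max_{\partial\widetilde\Omega}|g|-\log|g(z_{*})|\big)$ for a slightly larger domain $\widetilde\Omega$. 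So I would first fix such a reference point $z_{*}\in\Omega$ (with $|\im z_{*}|\sim 1$, away from $\R$, so that by \eqref{a3} and \eqref{a2} all resolvents are uniformly bounded there and $f_{\delta}(s,z_{*})$ stays bounded away from $0$ uniformly in $s,\delta$), and then produce an upper bound for $|f_{\delta}(s,z)|$ on $\partial\Omega$, or rather on a slightly enlarged contour, of the form $\exp\big(Cn(sI_{\delta})|\ln\delta|\big)$.

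The key estimate is thus the bound $\log|f_{\delta}(s,z)|\le Cn(sI_{\delta})|\ln\delta|$ near $\partial\Omega$. To get this I would use the elementary inequality $|\det(I+T)|\le\prod_{j}(1+s_{j}(T))\le(1+\|T\|)^{\rank T}$ for a finite-rank operator $T$, with $T=-\frac{K_{\delta}(s)}{sz}\big(I-\frac{A(sz)-K_{\delta}(s)}{sz}\big)^{-1}$. Here $\rank T\le n(sI_{\delta})$. For the norm, the factor $\big(I-\frac{A(sz)-K_{\delta}(s)}{sz}\big)^{-1}$ is controlled by Lemma \ref{a4}: it is $\cO\big(1/(\delta+|\im z|)\big)$ on $V_{\nu,\delta}\cup W_{\delta/2}$, which after choosing $\varepsilon$ covers a neighborhood of $\partial\Omega$. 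The remaining factor is $\|K_{\delta}(s)/(sz)\|$; since $K_{\delta}(s)=A(0)\one_{I_{\delta}}(A(0)/s)$ is supported spectrally on $|\lambda|\in s(\partial\Omega\cap\R+[-\delta,\delta])$ and $|z|$ is bounded away from $0$ on $\partial\Omega$, we have $\|K_{\delta}(s)/(sz)\|=\cO(1)$. Hence $\|T\|=\cO(1/(\delta+|\im z|))=\cO(1/\delta)$ near $\partial\Omega$, giving $\log|f_{\delta}(s,z)|\le Cn(sI_{\delta})\log(1+C/\delta)\le Cn(sI_{\delta})|\ln\delta|$ there. Feeding this, together with the uniform lower bound $|f_{\delta}(s,z_{*})|\ge c>0$, into the Jensen inequality on two nested contours yields $\ind_{\dom}f_{\delta}(s,\cdot)=\cO\big(n(sI_{\delta})|\ln\delta|\big)$.

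The main obstacle is the extra factor $|\ln\delta|$ in the statement — the crude Jensen argument as sketched only gives one power of $|\ln\delta|$. To recover the square, one should not apply Jensen on all of $\partial\Omega$ at once but localize: cover $\partial\Omega$ by $\cO(|\ln\delta|)$ pieces (dyadically in the distance $|\im z|\sim 2^{-k}$ down to scale $\delta$, plus the two ``corner'' regions $W_{\delta}$ near $\partial\Omega\cap\R$), count zeros in discs attached to each piece using the local bound $\log|f_{\delta}|\le Cn(sI_{\delta})|\ln(\delta+2^{-k})|$, and sum. Near the real axis, where $f_{\delta}$ has its genuine zeros, the transversality hypothesis on $\partial\Omega$ guarantees that only $\cO(|\ln\delta|)$ dyadic annuli are needed and that the relevant eigenvalues of $A(0)$ all lie in $sI_{\delta}$, so each local count is $\cO\big(n(sI_{\delta})|\ln\delta|\big)$ and the total over $\cO(|\ln\delta|)$ pieces would naively give $|\ln\delta|^{3}$; the gain back to $|\ln\delta|^{2}$ comes from the fact that the bound $\log(\delta+2^{-k})$ is summable against the number of pieces at scale $2^{-k}$, so the geometric sum contributes only one extra logarithm rather than $|\ln\delta|$. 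Carrying out this dyadic bookkeeping carefully, while keeping all constants uniform in $s$ and $\delta$ (which is why Lemma \ref{a4} was stated with uniform constants), is the technical heart of the argument.
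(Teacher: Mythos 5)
The fundamental problem is that your strategy treats $f_{\delta}(s,\cdot)$ as a \emph{holomorphic} function on $\Omega$ and tries to read off $\ind_{\dom} f_{\delta}$ as a zero count to be controlled by Jensen. But $f_{\delta}$ is only holomorphic on $V_{\nu,\delta}\cup W_{\delta/2}$ (Lemma \ref{a4}), i.e.\ on a neighborhood of $\partial\Omega$. In the interior of $\Omega$, near $\R$ and away from the crossing points $x_j$, the operator $I-\frac{A(sz)-K_{\delta}(s)}{sz}$ does fail to be invertible (its characteristic values there are, up to perturbation, the rescaled eigenvalues of $A(0)$ in $sJ$ that were \emph{not} removed by $K_{\delta}(s)$), so $\big(I-\frac{A(sz)-K_{\delta}(s)}{sz}\big)^{-1}$ has poles inside $\Omega$ and $f_{\delta}$ is genuinely meromorphic there. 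The number of these poles, like the number of zeros, is of order $n(sJ)$, which is in general far larger than the target bound $n(sI_{\delta})|\ln\delta|^{2}$. The index $\ind_{\dom} f_{\delta}=\frac{1}{2i\pi}\int_{\partial\Omega} f_{\delta}'/f_{\delta}\,dz$ counts zeros \emph{minus} poles, and the whole point of Proposition \ref{a1} is that these nearly cancel. A Jensen-type upper bound on the number of zeros cannot capture this cancellation: even under an ideal version of your argument you would get a bound of size $\cO(n(sJ))$, not $\cO\big(n(sI_{\delta})|\ln\delta|^{2}\big)$, and for a meromorphic function with many poles no Jensen inequality on $\overline{\Omega}$ is available at all (the inequality requires holomorphy in the full disc, not just a bound on the circle).

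For the same reason the proposed dyadic localization, which attaches Jensen discs to pieces of $\partial\Omega$, does not reach the interior poles/zeros near $J$; it only sees what lies close to the boundary. The paper's proof avoids zero-counting altogether: it bounds the contour integral directly. Lemma \ref{a12}, a consequence of Sj\"ostrand's factorization theorem (Theorem \ref{a28}) applied to your upper bound \eqref{a9} and the lower bound \eqref{a10}, factorizes $f_{\delta}$ near the crossings $x_j$ as $\prod_{k}\frac{z-z_{k}^{\delta}(s)}{\delta}\,e^{g_{\delta}}$ with $N_{\delta}(s)\leq C n(sI_{\delta})|\ln\delta|$ and shows $f_{\delta}=e^{g_{\delta}}$ on $V_{\nu/2,2\delta}$ with $|g_{\delta}'|\lesssim n(sI_{\delta})\frac{\langle\ln|\im z|\rangle}{|\im z|}$. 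One then integrates $f_{\delta}'/f_{\delta}$ over $\partial\Omega$: the portion away from $\R$ gives $\cO\big(n(sI_{\delta})|\ln\delta|^{2}\big)$ via $\int_{\delta}^{1}\frac{|\ln x|}{x}\,dx=\tfrac12|\ln\delta|^{2}$, and the portion near the crossings is handled by closing the short arcs with the auxiliary paths $\widetilde\Gamma_{\delta/4}^{j}$ and using that the $z_k^{\delta}(s)$ stay at distance $\gtrsim\delta$ from $\widetilde\Gamma_{\delta/4}^{j}$; this yields $\cO\big(n(sI_{\delta})|\ln\delta|\big)$ there. A minor additional slip in your write-up: the claim that $|f_{\delta}(s,z_{*})|\geq c>0$ uniformly is not available; \eqref{a10} only gives $\log|f_{\delta}(s,z_*)|\geq -Cn(sI_{\delta})$, which happens to be of the right order but is not a uniform constant. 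This is inconsequential compared with the meromorphy issue, which is where the approach breaks.
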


We now prove Theorem \ref{a17} and postpone the proof of the crucial Proposition \ref{a1} to the next section. Combining Lemma \ref{a20}, Lemma \ref{a23}, Lemma \ref{a24} with Proposition \ref{a1}, we obtain
\begin{equation*}
\cN ( s \Omega ) = \Ind_{\partial \Omega} \Big( I - \frac{A (0) - K_{\delta} (s)}{s z} \Big) + \cO \big( n (s I_{\delta} ) \vert \ln \delta \vert^{2} \big) .
\end{equation*}
Let $\widetilde{A}_0 := A(0) - K_{\delta} (s) = A (0) \one_{\R \setminus s I_{\delta}} (A(0))$ and  $\widetilde{I}_{\delta} := I_{\delta}\cap \Omega$. Then Theorem \ref{a17} follows from
\begin{align*}
\Ind_{\partial \Omega} \Big( I - \frac{A(0) - K_{\delta} (s)}{s z}
\Big) &= \frac{1}{2 i \pi} \tr \int_{\partial \Omega} \frac{ \widetilde{A}_0}{s z} \Big( z - \frac{\widetilde{A}_{0}}{s} \Big)^{-1} d z = \frac{1}{2 i \pi} \tr \int_{\partial \Omega} \Big( z - \frac{\widetilde{A}_{0}}{s} \Big)^{-1} d z  \\
&= \tr \one_{s J } ( \widetilde{A}_0 )= \tr \one_{s J \setminus s {\widetilde{I}}_{\delta}} ( A(0) )= n ( s J ) - n ( s \widetilde{I}_{\delta} )  \\
&= n ( s J ) + \cO \big( n (s I_{\delta} )  \big).
\end{align*}

In the same way, Remark \ref{c2} follows from the fact that Proposition \ref{a1} holds uniformly with respect to $a_{-} \leq a \leq a_{+}$ and $b_{-} \leq b \leq b_{+}$.

\Subsection{Proof of Proposition \ref{a1}}

We first obtain a factorization of the determinant $f_{\delta} (s,z)$. This idea, due to J. Sj\"{o}strand \cite{Sj97_01}, comes from the study of the semiclassical resonances. Since $\partial \Omega$ is transverse to the real axis, the intersection $\partial \Omega \cap \R$ is a finite set of real numbers $x_{j}$ with $j=1 , \ldots ,J$. Setting $W_{\delta}^{j} : = B ( x_{j} , \delta )$, we have
\begin{equation*}
W_{\delta} = \bigcup_{j=1}^{J} W_{\delta}^{j} .
\end{equation*}
Note that this union is disjoint for $\delta$ small enough (see Figure \ref{f2}).

\begin{lemma}\sl \label{a12}
There exists $C >0$ such that, for all $\delta > 0$ small enough, there exists $s ( \delta ) > 0$ such that, for all $0 < s < s ( \delta )$,

$i)$ for all $j$, the function $f_{\delta} (s,z)$ can be written in $W_{\delta /4}^{j}$ as
\begin{equation*}
f_\delta(s,z) = \prod_{k = 1}^{N_{\delta} (s)} \frac{(z -z_{k}^{\delta} (s))}{\delta} e^{g_{\delta} (s,z)} ,
\end{equation*}
where $z_{k}^{\delta} (s) \in W_{\delta /2}^{j}$, $z \longmapsto g_{\delta} (s,z)$ is holomorphic in $W_{\delta /4}^{j}$ with
\begin{equation} \label{gr1}
N_{\delta} (s) \leq C \vert \ln \delta \vert n ( s I_{\delta} ),
\end{equation}
and
\begin{equation*}
\big\vert g_{\delta}^{\prime} (s,z) \big\vert \leq
C \frac{\vert \ln \delta \vert}{\delta} n ( s I_{\delta} ) , \qquad z \in W_{\delta /4}^{j}.
\end{equation*}

$ii)$ the function $f_{\delta} (s,z)$ has no zeroes  in $V_{\nu /2 , 2 \delta}$ and, in this set,
\begin{equation*}
f_\delta(s,z) = e^{g_{\delta} (s,z)}
 \end{equation*}
with $z \longmapsto g_{\delta} (s,z)$  holomorphic in  $V_{\nu /2
  , 2 \delta}$ and satisfying $\big\vert g_{\delta}^{\prime}
  (s,z) \big\vert \leq C \frac{\< \ln \vert \im z \vert \>}{\vert \im z \vert} n ( s I_{\delta} )$ .
\end{lemma}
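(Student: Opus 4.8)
The plan is to obtain the claimed factorization from a Jensen-type counting estimate on the number of zeros of $f_\delta(s,\cdot)$ inside $W_{\delta/4}^j$ and a Borel–Carathéodory bound on the derivative of the holomorphic logarithm of the non-vanishing factor. First I would recall from \eqref{a27} that
\begin{equation*}
f_\delta(s,z) = \det \Big( I - \frac{K_\delta(s)}{sz} \Big( I - \frac{A(sz)-K_\delta(s)}{sz} \Big)^{-1} \Big),
\end{equation*}
which, by the finiteness of $\rank K_\delta(s)$ and \eqref{a2}, is a genuine (finite-dimensional) determinant that is holomorphic and nowhere zero on $V_{\nu,\delta}\cup W_{\delta/2}$. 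The key quantitative input is an upper bound on $|f_\delta(s,z)|$ on $W_{\delta/2}^j$: since $\|K_\delta(s)/(sz)\| \le C/\delta$ there (by the definition \eqref{a21} of $K_\delta(s)$, whose spectrum relative to $s$ lies within $\delta$ of $\partial\Omega\cap\R$, at distance $\gtrsim \delta_0$ from $0$), and $\rank K_\delta(s) = n(sI_\delta)$, the standard determinant estimate $|\det(I+T)| \le (1+\|T\|)^{\rank T}$ gives
\begin{equation*}
\log |f_\delta(s,z)| \le C \, n(sI_\delta) \, |\ln \delta|, \qquad z \in W_{\delta/2}^j .
\end{equation*}
A matching lower bound at one point (say $z = x_j + i\delta/2 \in V_{\nu,\delta}\cap W_{\delta/2}^j$) follows similarly, using \eqref{a2} to control the inverse: $|f_\delta(s,x_j+i\delta/2)| \ge e^{-C n(sI_\delta)|\ln\delta|}$.

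Next I would apply Jensen's formula (or the Jensen–Poisson inequality) on the disk $W_{\delta/2}^j = B(x_j,\delta/2)$, comparing the maximum of $\log|f_\delta|$ on $|z-x_j|=\delta/2$ with its value at the interior point $x_j + i\delta/2$: the number $N_\delta(s)$ of zeros (counted with multiplicity) inside the smaller disk $B(x_j,\delta/4)$ is bounded by the difference of these two logarithmic quantities, divided by $\log 2$, hence $N_\delta(s) \le C|\ln\delta| \, n(sI_\delta)$, which is \eqref{gr1}; the zeros lie in $W_{\delta/2}^j$ by construction. Writing $z_1^\delta(s),\dots,z_{N_\delta(s)}^\delta(s)$ for these zeros, the Blaschke-type quotient $f_\delta(s,z) \prod_k \delta (z-z_k^\delta(s))^{-1}$ is holomorphic and zero-free on $W_{\delta/4}^j$, so it equals $e^{g_\delta(s,z)}$ for some holomorphic $g_\delta(s,\cdot)$; this is part $i)$. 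For the derivative bound on $g_\delta$, I would use the Borel–Carathéodory theorem on $\re g_\delta$: on a slightly smaller disk than $W_{\delta/4}^j$ one has $\re g_\delta = \log|f_\delta| - \sum_k \log(\delta/|z-z_k^\delta(s)|)$, and both terms are $O(|\ln\delta|\, n(sI_\delta))$ in absolute value (the sum because each factor is $\le \log(\delta/(\delta/4)) = \log 4$ in modulus once $|z-x_j|\le \delta/8$, say, and $N_\delta(s) \le C|\ln\delta|n(sI_\delta)$; near a zero one uses the other side), whence $|g_\delta'(s,z)| \le C\delta^{-1}|\ln\delta|\, n(sI_\delta)$ by Cauchy's estimate after Borel–Carathéodory, shrinking the radius by a fixed fraction.

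For part $ii)$, on $V_{\nu/2,2\delta}$ the resolvent bound \eqref{a2} gives $\|K_\delta(s)(sz)^{-1}(\,\cdots)^{-1}\| \le C(\delta+|\im z|)^{-1}\cdot(\delta+|\im z|)^{-1}\|K_\delta(s)/\cdot\|$... more carefully: since $\|(I - (A(sz)-K_\delta(s))/(sz))^{-1}\| \le C/(\delta+|\im z|)$ and on $V_{\nu/2,2\delta}$ one has $|\im z| > \varepsilon\delta$, the argument of $\det$ stays within the region where Fredholm determinants do not vanish once we also control $\|K_\delta(s)/(sz)\|$; the zero-freeness on $V_{\nu/2,2\delta}$ is essentially \eqref{a3} applied after the same decomposition, so $f_\delta(s,z) = e^{g_\delta(s,z)}$ there with $g_\delta$ holomorphic. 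The derivative estimate $|g_\delta'(s,z)| \le C \langle \ln|\im z|\rangle |\im z|^{-1} n(sI_\delta)$ again comes from Borel–Carathéodory, now on a disk of radius $\sim |\im z|$ centered at $z$ contained in $V_{\nu/2,2\delta}\cup W_{\delta/2}$, on which $\log|f_\delta| = O(n(sI_\delta)\langle\ln|\im z|\rangle)$ by the determinant bound (the $\langle \ln|\im z|\rangle$ replacing $|\ln\delta|$ because the relevant bound on $\|K_\delta(s)/(sz)\|$ and the inverse is now $O(|\im z|^{-1})$). The main obstacle I expect is bookkeeping the interplay of the two length scales $\delta$ and $|\im z|$ so that the estimates in $i)$ and $ii)$ match on the overlap $W_{\delta/4}^j \cap V_{\nu/2,2\delta}$, and verifying that the Borel–Carathéodory radius can always be taken a fixed fraction of the distance to the boundary of the region where the determinant is controlled — everything else is a routine combination of the determinant inequality, Jensen's formula, and Cauchy estimates.
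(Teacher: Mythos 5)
Your proposal follows the same overall strategy as the paper: the finite-rank determinant estimate $|\det(I+T)|\le(1+\|T\|)^{\rank T}$ combined with the resolvent bounds of Lemma~\ref{a4} yields the two-sided bounds $e^{-Cn(sI_\delta)|\ln(\delta+|\im z|)|}\le|f_\delta(s,z)|\le e^{Cn(sI_\delta)|\ln(\delta+|\im z|)|}$ (the lower bound only where $|\im z|\gtrsim\delta$), and the rescalings $\lambda\mapsto x_j+\delta\lambda$ (for part $i)$) and $\lambda\mapsto z+\lambda|\im z|/4$ (for part $ii)$) put one in a model situation on a fixed disk.

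The genuine difference is what you do with those bounds. The paper cites Sj\"ostrand's factorization theorem (Theorem~\ref{a28}) as a black box, which simultaneously produces the Weierstrass-type factor with $N\le CM$, and the bound $|g'|\le CM$ on a compactly contained subdomain, from an upper bound $|F|\le e^M$ on the whole disk and a lower bound $|F|\ge e^{-M}$ only on the upper half. You instead re-derive this: Poisson--Jensen with the upper bound on $\partial B(x_j,\delta/2)$ and the lower bound at one interior reference point in the upper half counts the zeros, and Borel--Carath\'eodory plus a Cauchy estimate controls $g_\delta'$. This is exactly the content of Sj\"ostrand's lemma unpacked, so it buys nothing new but is perfectly valid and makes the argument self-contained. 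Two bookkeeping points you should tighten: the reference point must be \emph{strictly interior} to the disk on which you apply Poisson--Jensen and must lie in $V_{\nu,\delta}$ where the lower bound \eqref{a10} actually holds (e.g.\ $x_j+i\delta/4$ with $\varepsilon<1/4$, not $x_j+i\delta/2$, which sits on $\partial W_{\delta/2}^j$); and for part $ii)$ you should check that the disk $B(z,|\im z|/2)$ remains inside $V_{\nu,\delta}\cup W_{\delta/2}$ for all $z\in V_{\nu/2,2\delta}$, which is what the choice of $\nu/2$ and $2\delta$ in the statement is for. With those in place your route is sound.
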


To prove Lemma \ref{a12} $i)$, we will apply a complex-analysis result obtained by J. Sj\"{o}strand in the context of the investigation of resonance distribution:

\begin{theorem}[see \cite{Sj97_01}, \cite{Sj01_01}]\sl \label{a28}
Let  $U$ be a simply connected domain of $\C$ with $U \cap \{ \im
\lambda > 0\} \neq \emptyset$. Let $F: U \longrightarrow \C$ be a holomorphic function such that for some $M \geq 1$ we have
\begin{equation*}
\begin{aligned}
&\vert F (\lambda ) \vert \leq e^{M}, &&  \qquad \lambda \in U ,   \\
&\vert F (\lambda ) \vert \geq e^{- M}, && \qquad   \lambda \in U \cap \{ \im \lambda > 0 \} .
\end{aligned}
\end{equation*}
Then, for any $\widetilde{U} \Subset U$, there exists a constant
$C_{\widetilde{U}, U}$ independent of $F$ and $M$, and a
holomorphic function $g : \widetilde{U} \longrightarrow \C$ such
that
\begin{equation*}
  F(\lambda) = \prod_{k =1}^{N } (\lambda - \lambda_{k} ) e^{g (\lambda)}, \qquad  \lambda \in \widetilde{U},
\end{equation*}
where the $\lambda_{k}$'s are  zeroes of $F$ in $U$, $N\leq
C_{\widetilde{U}, U} M$, and $\vert g^{\prime} (\lambda) \vert \leq C_{\widetilde{U}, U} M$ for $\lambda \in \widetilde{U}$.
\end{theorem}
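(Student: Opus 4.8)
The plan is to establish Theorem~\ref{a28} by the classical route: factor out the zeros of $F$ explicitly, write $F = P\cdot e^{g}$ with $P$ a polynomial whose roots are the zeros of $F$ in a slightly enlarged domain and $g=\log(F/P)$, and then prove two quantitative bounds with constants depending only on the pair $(\widetilde U,U)$ — namely $\deg P\le C_{\widetilde U,U}M$ and $|g'|\le C_{\widetilde U,U}M$ on $\widetilde U$. First I would fix a chain of Jordan subdomains $\widetilde U\Subset U_0\Subset U_1\Subset U_2\Subset U$, each meeting $\{\im\lambda>0\}$ (this is possible since $U$ is connected, simply connected, meets $\{\im\lambda>0\}$ and $\overline{\widetilde U}$ is compact in $U$; if $\widetilde U$ itself fails to be simply connected, replace it from the outset by a simply connected $U_0$), with $F$ holomorphic near $\overline{U_2}$, and pick a base point $\lambda_0\in U_0\cap\{\im\lambda>0\}$. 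Then $|F(\lambda_0)|\ge e^{-M}>0$, so $F\not\equiv 0$ and $F$ has only finitely many zeros in $\overline{U_1}$.

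For the zero count I would use the Poisson--Jensen formula on $U_2$. If $g_{U_2}$ denotes the Green function of $U_2$ and $\omega_{\lambda}$ its harmonic measure, then
\begin{equation*}
\sum_{k:\,\lambda_k\in U_2} g_{U_2}(\lambda_0,\lambda_k) = \int_{\partial U_2}\log|F|\,d\omega_{\lambda_0} - \log|F(\lambda_0)| \le M-(-M) = 2M,
\end{equation*}
using $\log|F|\le M$ on $U_2$ and $\log|F(\lambda_0)|\ge-M$. Since $\lambda\mapsto g_{U_2}(\lambda_0,\lambda)$ is continuous and strictly positive on the compact set $\overline{U_1}$, it is bounded below there by some $c_0=c_0(U_1,U_2)>0$; hence the number of zeros of $F$ in $U_1$ (with multiplicity) is at most $2M/c_0$. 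Setting $P(\lambda)=\prod_{\lambda_k\in U_1}(\lambda-\lambda_k)$ and $G=F/P$, which is holomorphic and zero-free on $U_1$, I get $F=Pe^{g}$ on $U_1\supset\widetilde U$ with $g=\log G$ holomorphic (branch fixed at $\lambda_0$), $N=\deg P\le C_{\widetilde U,U}M$, and the $\lambda_k$ are zeros of $F$ in $U$ as required.

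For the derivative bound I would work with the harmonic function $v=\log|G|=\re g$ on $U_1$. Using the Poisson--Jensen decomposition $\log|F| = h - \sum_{\lambda_k\in U_1}g_{U_1}(\cdot,\lambda_k)$, where $h$ is the harmonic extension to $U_1$ of the boundary values of $\log|F|$ on $\partial U_1$, together with $g_{U_1}(\lambda,\mu)=\log\frac1{|\lambda-\mu|}+\gamma_{U_1}(\lambda,\mu)$ and the fact that the regular part $\gamma_{U_1}$ is continuous, hence bounded on $\overline{U_0}\times\overline{U_1}$, one finds
\begin{equation*}
v(\lambda) = h(\lambda) - \sum_{\lambda_k\in U_1}\gamma_{U_1}(\lambda,\lambda_k),\qquad \lambda\in U_1.
\end{equation*}
Now $h\le M$ on $U_1$ by the maximum principle, while $h(\lambda_0)\ge\log|F(\lambda_0)|\ge-M$, so Harnack's inequality applied to the nonnegative harmonic function $M-h$ gives $|h|\le C_{U_0,U_1}M$ on $\overline{U_0}$; combined with $\bigl|\sum_{\lambda_k\in U_1}\gamma_{U_1}(\lambda,\lambda_k)\bigr|\le N\sup|\gamma_{U_1}|\le C_{\widetilde U,U}M$, this yields $|v|\le C_{\widetilde U,U}M$ on $\overline{U_0}$. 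Finally, since $v$ is harmonic and $g'=v_x-iv_y$ by the Cauchy--Riemann equations, the interior gradient estimate for harmonic functions gives, for $\lambda\in\widetilde U$,
\begin{equation*}
|g'(\lambda)| = |\nabla v(\lambda)| \le \frac{2}{\dist(\widetilde U,\partial U_0)}\sup_{U_0}|v| \le C_{\widetilde U,U}M,
\end{equation*}
which completes the argument.

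The main point to keep in mind is the asymmetry of the hypotheses: an upper bound $|F|\le e^{M}$ everywhere on $U$ but a lower bound $|F|\ge e^{-M}$ only on $U\cap\{\im\lambda>0\}$; and the non-negotiable requirement that all constants depend on the nested geometry alone, not on $F$ or $M$. This is precisely what the pairing of Jensen's formula (which needs only the single value $\log|F(\lambda_0)|\ge-M$ to control the whole zero sum) with Harnack's inequality (which propagates the positivity of $M-h$ from $\lambda_0$ to all of $U_0$) is designed to resolve; the remainder is routine bookkeeping along the chain $\widetilde U\Subset U_0\Subset U_1\Subset U_2\Subset U$. An alternative I would consider, should the potential theory on general Jordan domains feel heavy, is to transplant the problem to the unit disk via a Riemann map of $U$ — whose distortion on the compact sets involved is controlled uniformly — thereby reducing everything to the classical Jensen and Harnack estimates on the disk.
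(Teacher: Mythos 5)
The paper does not prove Theorem~\ref{a28}; it is quoted directly, with attribution, from Sj\"ostrand's works \cite{Sj97_01}, \cite{Sj01_01}, so there is no in-paper proof to compare against. Your argument is the standard potential-theoretic proof and, modulo the usual bookkeeping (perturbing the intermediate Jordan curves so that $F$ does not vanish on them before applying Poisson--Jensen, and checking that $\gamma_{U_1}$ stays bounded on $\overline{U_0}\times\overline{U_1}$ because $\overline{U_0}\Subset U_1$ keeps the diagonal singularity strictly interior), it is correct: Poisson--Jensen on $U_2$ with the single lower bound $\log|F(\lambda_0)|\ge-M$ controls $\sum_k g_{U_2}(\lambda_0,\lambda_k)$, compactness of $\overline{U_1}\Subset U_2$ gives $N\lesssim M$, and the Riesz decomposition of $\log|F|$ plus Harnack applied to $M-h$ plus the interior gradient estimate for the harmonic function $v=\re g$ yield $|g'|\lesssim M$ on $\widetilde U$, with all constants depending only on the nested chain and hence only on $(\widetilde U,U)$. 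This is essentially the route taken by Sj\"ostrand (Jensen to count, Harnack/Borel--Carath\'eodory to control the holomorphic remainder), so nothing further needs to be said.
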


\begin{proof}[Proof of Lemma \ref{a12}]
Since $\cK (z) := \frac{K_\delta(s)}{s z} \Big(I - \frac{A(s z)-K_\delta(s)}{s z}\Big)^{-1}$ is a finite rank operator, we have
\begin{equation*}
f_{\delta} (s,z) = \det \big( I - \cK (z) \big) = \prod_{j=1}^{\rank \cK (z)} ( 1 - \lambda_{j} (z) ) ,
\end{equation*}
where $\lambda_{j} (z)$ are the eigenvalues of $\cK (z)$. On the other hand, the definition of the operator $K_\delta(s) = A(0) {\one}_{I_\delta} ( \frac{A(0)}{s} )$ in \eqref{a21}, and Lemma \ref{a4} yield
\begin{equation*}
\rank \cK (z) = n ( s I_{\delta} ), \qquad \Vert \cK (z) \Vert < \frac{C}{\delta + \vert \im z \vert}, \qquad z \in V_{\nu , \delta} \cup W_{\delta /2} .
\end{equation*}
Then, there exists $C >0$ such that, for all $\delta > 0$ small enough and then $s$ small enough,
\begin{align}
\vert f_{\delta} (s,z) \vert &\leq \big( 1 + \Vert \cK (z) \Vert \big)^{\rank \cK (z)} \nonumber \\
&\leq e^{C n ( s I_\delta ) \vert \ln ( \delta + \vert \im z \vert ) \vert} , \label{a9}
\end{align}
uniformly for $z \in V_{\nu , \delta} \cup W_{\delta /2}$. On the other hand, for $z \in V_{\nu , \delta}$,
\begin{equation*}
f_\delta(s,z)^{-1} = \det \Big( I + \frac{K_\delta(s)}{s z} \Big(I - \frac{A (s z)}{s z} \Big)^{-1} \Big) .
\end{equation*}
Applying once again Lemma \ref{a4}, the previous argument gives
\begin{equation} \label{a10}
\vert f_{\delta} (s,z) \vert \geq e^{- C n ( s I_\delta ) \vert \ln ( \delta + \vert \im z \vert ) \vert} ,
\end{equation}
for $z \in V_{\nu , \delta}$.

Now we apply Theorem \ref{a28} to the function $\lambda \longmapsto F ( \lambda , s, \delta ) :=  f_{\delta} (s , x_{j} + \delta \lambda )$ which is holomorphic in $B (0 ,1/2 )$. Estimates \eqref{a9}--\eqref{a10} give
\begin{equation*}
\begin{aligned}
&\vert F ( \lambda , s, \delta ) \vert \leq e^{C n ( s I_\delta ) \vert \ln \delta \vert}, && \qquad \lambda \in B (0 , 1/2) ,   \\
&\vert F ( \lambda , s, \delta ) \vert \geq e^{-C n ( s I_\delta ) \vert \ln \delta \vert}, && \qquad   \lambda \in B (0 , 1/2 ) \cap \{ \im z > \varepsilon \} .
\end{aligned}
\end{equation*}
Then, Theorem \ref{a28} yields, for all $\lambda \in B ( 0 , 1/4 )$,
\begin{equation*}
f_{\delta} (s , x_{j} + \delta \lambda ) = \prod_{k =1}^{N_{\delta} ( s )} ( \lambda - \lambda_{k}^{\delta} ( s ) ) e^{g_{\delta}  (s , \lambda )} ,
\end{equation*}
with $\lambda_{k}^{\delta} (s) \in B ( 0 , 1/2)$, $N_{\delta} ( s ) \leq \widetilde{C} n ( s I_\delta ) \vert \ln \delta \vert$ and $\vert g^{\prime}_{\delta}  (s , \lambda ) \vert \leq \widetilde{C} n ( s I_\delta ) \vert \ln \delta \vert$. Changing of variable $z = x_{j} + \delta \lambda$, we obtain Lemma
\ref{a12} $i)$.

According to (\ref{a10}), for sufficiently small $s$,  $f_{\delta} (s,z)$ has no zeroes in $V_{\nu , \delta}$. Then there exists $g_{\delta} (s,z)$ holomorphic with
respect to $z \in V_{\nu ,\delta}$ such that
\begin{equation*}
f_\delta(s,z) = e^{g_{\delta} (s,z)} .
\end{equation*}
For $z \in  V_{\nu/2 ,2\delta}$ consider the function
\begin{equation*}
F: \lambda \longmapsto f_{\delta} \Big( s , z +  \lambda \frac{\vert \im z \vert}{4} \Big) , \qquad \lambda \in B (0,2) .
\end{equation*}
Since $F$ has no zeroes in $B(0,2)$, the combination of \eqref{a9}--\eqref{a10} with Theorem \ref{a28}, yields
\begin{equation*}
\frac{\vert \im z \vert}{4} \Big\vert g_\delta^{\prime} \Big( s, z +  \lambda \frac{\vert \im z \vert}{4} \Big) \Big\vert \leq \widetilde{C} n ( s I_\delta ) \vert \ln  \vert \im z \vert \vert , \qquad \lambda
\in B(0,1),
\end{equation*}
with $\widetilde{C}$ independent of $\lambda$, $z$, $s$ and $\delta$. Thus we obtain part  $ii)$ of Lemma \ref{a12}.
\end{proof}

We can now prove Proposition \ref{a1}.  For $j = 1 , \ldots ,J$, set
\begin{equation*}
\Gamma_{\delta}^{j} := \partial \Omega \cap B (x_{j} , \delta ) \qquad \text{ and } \qquad \Gamma_{\delta} := \bigcup_{j = 1}^{J} \Gamma_{\delta}^{j} .
\end{equation*}
For $\delta$ small enough, the $\Gamma_{\delta}^{j}$'s are segments of
size $\delta$ (see Figure \ref{f2}). Moreover, from the assumptions on
$\Omega$, there exists $\varepsilon > 0$ such that $\im z > 2 \varepsilon \delta$ for all $z \in \partial \Omega \setminus
\Gamma_{\delta /4}$. Let us write
\begin{align}
\ind_{\dom} f_{\delta} ( s , z ) &= \frac{1}{2 i \pi} \int_{\dom} \frac{f_{\delta}^{\prime} (s , z)}{f_{\delta} ( s , z )} d z  \nonumber \\
&= \frac{1}{2 i \pi} \int_{\partial \Omega \setminus \Gamma_{\delta /4}} \frac{f_{\delta}^{\prime} (s , z)}{f_{\delta} ( s , z )} d z + \frac{1}{2 i \pi} \int_{\Gamma_{\delta /4}} \frac{f_{\delta}^{\prime} (s , z)}{f_{\delta} ( s , z )} d z . \label{a11}
\end{align}
From Lemma \ref{a12} $ii)$, the first term of \eqref{a11} satisfies
\begin{align}
\Big\vert \frac{1}{2 i \pi} \int_{\partial \Omega \setminus \Gamma_{\delta /4}} \frac{f_{\delta}^{\prime} (s , z)}{f_{\delta} ( s , z )} d z \Big\vert &\leq \frac{1}{2 \pi} \int_{\partial \Omega \setminus \Gamma_{\delta /4}} \big\vert g_{\delta}^{\prime} (s,z) \big\vert \vert d z \vert   \nonumber \\
&\leq \frac{C}{2 \pi} n ( s I_{\delta} ) \int_{\partial \Omega \setminus \Gamma_{\delta /4}} \frac{\< \ln \vert \im z \vert \>}{\vert \im z \vert} \vert d z \vert  \nonumber \\
&\leq C n ( s I_{\delta} ) \vert \ln \delta \vert^{2} . \label{a13}
\end{align}
Here, we have used the elementary identity  $\int_{[ \delta , 1 ]} \frac{\vert \ln x \vert}{x} \, d x = \half \vert \ln \delta \vert^{2}$.

\begin{figure}
\begin{center}
\begin{picture}(0,0)%
\includegraphics{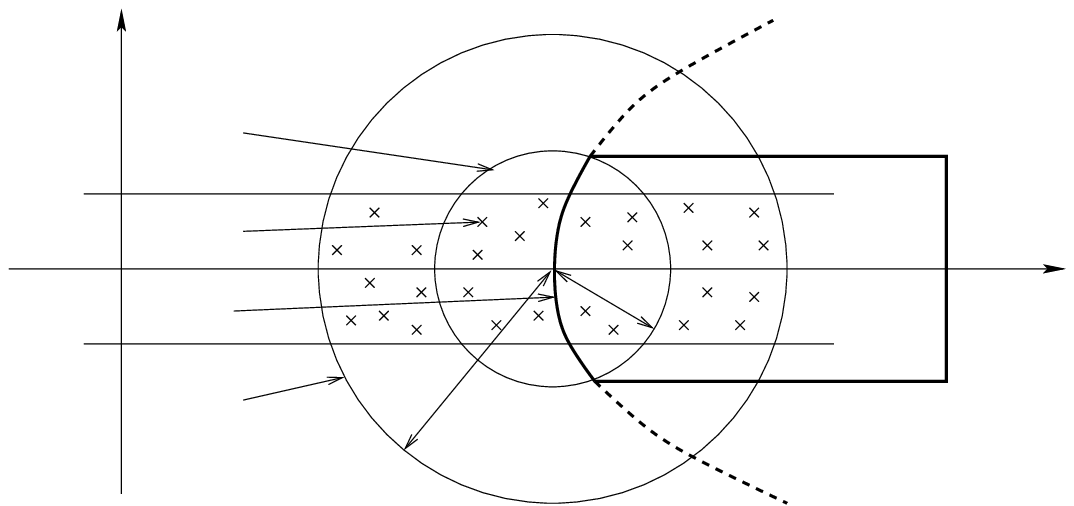}%
\end{picture}%
\setlength{\unitlength}{1184sp}%
\begingroup\makeatletter\ifx\SetFigFont\undefined%
\gdef\SetFigFont#1#2#3#4#5{%
  \reset@font\fontsize{#1}{#2pt}%
  \fontfamily{#3}\fontseries{#4}\fontshape{#5}%
  \selectfont}%
\fi\endgroup%
\begin{picture}(18922,8016)(-4949,-8955)
\put(6601,-5611){\makebox(0,0)[lb]{\smash{{\SetFigFont{9}{10.8}{\rmdefault}{\mddefault}{\updefault}$\delta /4$}}}}
\put(5926,-4936){\makebox(0,0)[lb]{\smash{{\SetFigFont{9}{10.8}{\rmdefault}{\mddefault}{\updefault}$x_{j}$}}}}
\put(12226,-4411){\makebox(0,0)[lb]{\smash{{\SetFigFont{9}{10.8}{\rmdefault}{\mddefault}{\updefault}$\widetilde{\Gamma}_{\delta /4}^{j}$}}}}
\put(10351,-3136){\makebox(0,0)[lb]{\smash{{\SetFigFont{9}{10.8}{\rmdefault}{\mddefault}{\updefault}$C \delta$}}}}
\put(-4934,-4036){\makebox(0,0)[lb]{\smash{{\SetFigFont{9}{10.8}{\rmdefault}{\mddefault}{\updefault}$\im z = o ( \delta )$}}}}
\put(-4934,-6436){\makebox(0,0)[lb]{\smash{{\SetFigFont{9}{10.8}{\rmdefault}{\mddefault}{\updefault}$\im z = - o ( \delta )$}}}}
\put(-449,-4636){\makebox(0,0)[lb]{\smash{{\SetFigFont{9}{10.8}{\rmdefault}{\mddefault}{\updefault}$z^{\delta}_{k} (s)$}}}}
\put(-449,-5911){\makebox(0,0)[lb]{\smash{{\SetFigFont{9}{10.8}{\rmdefault}{\mddefault}{\updefault}$\Gamma_{\delta /4}^{j}$}}}}
\put(-749,-1411){\makebox(0,0)[lb]{\smash{{\SetFigFont{9}{10.8}{\rmdefault}{\mddefault}{\updefault}$\C$}}}}
\put(7876,-1336){\makebox(0,0)[lb]{\smash{{\SetFigFont{9}{10.8}{\rmdefault}{\mddefault}{\updefault}$\partial \Omega$}}}}
\put(-449,-3061){\makebox(0,0)[lb]{\smash{{\SetFigFont{9}{10.8}{\rmdefault}{\mddefault}{\updefault}$W_{\delta /4}^{j}$}}}}
\put(-449,-7303){\makebox(0,0)[lb]{\smash{{\SetFigFont{9}{10.8}{\rmdefault}{\mddefault}{\updefault}$W_{\delta /2}^{j}$}}}}
\put(4051,-7561){\makebox(0,0)[lb]{\smash{{\SetFigFont{9}{10.8}{\rmdefault}{\mddefault}{\updefault}$\delta /2$}}}}
\end{picture}%
\caption{The situation near $x_{j}$ and the set $\widetilde{\Gamma}_{\delta}^{j}$.} \label{f3}
\end{center}
\end{figure}

On the other hand, Lemma \ref{a12} $i)$ implies
\begin{equation} \label{a14}
\frac{1}{2 i \pi} \int_{\Gamma_{\delta /4}^{j}} \frac{f_{\delta}^{\prime} (s , z)}{f_{\delta} ( s , z )} d z = \sum_{k=1}^{N_{\delta} (s)} \frac{1}{2 \pi} \int_{\Gamma_{\delta /4}^{j}} \frac{1}{z - z_{k}^{\delta} (s)} \, d z + \frac{1}{2 \pi} \int_{\Gamma_{\delta /4}^{j}} g_{\delta}^{\prime} (s,z) \, d z .
\end{equation}
Since $\vert \Gamma_{\delta /4}^{j} \vert \leq C \delta$, we get
\begin{equation}
\Big\vert \frac{1}{2 \pi} \int_{\Gamma_{\delta /4}^{j}}
g_{\delta}^{\prime} (s,z) \, d z \Big\vert \leq \frac{C}{2 \pi} n ( s
I_\delta ) \int_{\Gamma_{\delta /4}^{j}} \frac{\vert \ln \delta
  \vert}{\delta} \vert d z \vert \leq C n ( s I_\delta ) \vert \ln \delta \vert .   \label{a15}
\end{equation}
Now, let $\widetilde{\Gamma}_{\delta /4}^{j}$ be as in Figure \ref{f3}. Lemma \ref{a12} yields $z_{k}^{\delta} (s) \in B ( x_{j} , \delta /2 )$. Moreover, thanks to \eqref{a3}, we also have $\im z_{k}^{\delta} (s) = o ( \delta )$ as $s$ tends to $0$. Therefore, the $z_{k}^{\delta} (s)$'s are at distance $\delta$ from $\widetilde{\Gamma}_{\delta /4}^{j}$ for $s$ small enough. Then,
\begin{align}
\Big\vert \frac{1}{2 \pi} \int_{\Gamma_{\delta /4}^{j}} \frac{1}{z - z_{k}^{\delta} (s)} \, d z \Big\vert &= \Big\vert \frac{1}{2 \pi} \int_{\Gamma_{\delta /4}^{j} \cup \widetilde{\Gamma}_{\delta /4}^{j}} \frac{1}{z - z_{k}^{\delta} (s)} \, d z \Big\vert + \Big\vert \frac{1}{2 \pi} \int_{\widetilde{\Gamma}_{\delta /4}^{j}} \frac{1}{z - z_{k}^{\delta} (s)} \, d z \Big\vert  \nonumber \\
&\leq 1 +  \int_{\widetilde{\Gamma}_{\delta /4}^{j}} \frac{C}{\delta} \vert d z \vert \leq C.  \label{gr2}
\end{align}
Combining \eqref{gr2} with \eqref{gr1}, \eqref{a14}, and \eqref{a15}, we get
\begin{equation} \label{a16}
\Big\vert \frac{1}{2 i \pi} \int_{\Gamma_{\delta /4}} \frac{f_{\delta}^{\prime} (s , z)}{f_{\delta} ( s , z )} d z \Big\vert \leq C n ( s I_\delta ) \vert \ln \delta \vert .
\end{equation}
Proposition \ref{a1} now follows from \eqref{a11}, \eqref{a13}, and \eqref{a16}.

\section{Proof of Theorem \ref{a17b} and its corollaries}\label{s4}

In this section we prove Theorem \ref{a17b} applying Theorem
\ref{a17} with appropriate domains $s\Omega$ constructed so that
the associated intervals $I_\delta$ contain a ``small'' number of
eigenvalues of $A(0)$.  First we choose these intervals $I_\delta$
in accordance with the following general result for counting
functions.

\begin{lemma}\sl \label{b1}
There exists $C > 0$ such that, for any $\delta > 0$ small enough and $j \in \N$, there exists $\beta_{j} \in [ j - \frac{1}{4} , j + \frac{1}{4} ]$ such that
\begin{equation*}
n \big( \big[ 2^{-\beta_j} (1 - \delta ) , 2^{- \beta_j} ( 1 + \delta ) \big] \big) \leq C \delta \; n \big( \big[ 2^{- ( j + \frac{1}{4} )} , 2^{- ( j - \frac{1}{4} )} \big] \big) .
\end{equation*}
\end{lemma}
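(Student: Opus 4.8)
The plan is to prove Lemma \ref{b1} by a pigeonhole (averaging) argument over a dyadic family of intervals whose union is controlled by the counting function on a fixed larger interval. The point is that the counting function $n$ is nondecreasing under enlargement of sets, monotone along chains of nested intervals, and additive over disjoint unions, so any ``bad'' interval carrying a fraction of the total mass can only occur at boundedly many scales.

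First I would fix $j \in \N$ and work inside the interval $[2^{-(j+1/4)}, 2^{-(j-1/4)}]$, whose total count is $M_j := n\big(\big[2^{-(j+1/4)}, 2^{-(j-1/4)}\big]\big)$. For a parameter $\beta \in [j-1/4, j+1/4]$ set $\Lambda(\beta) := [2^{-\beta}(1-\delta), 2^{-\beta}(1+\delta)]$; for $\delta$ small enough each such $\Lambda(\beta)$ is contained in the big interval. The plan is to choose inside $[j-1/4, j+1/4]$ a collection of roughly $c/\delta$ values $\beta^{(1)} < \cdots < \beta^{(m)}$ with $m \asymp 1/\delta$, spaced so that the corresponding intervals $\Lambda(\beta^{(i)})$ are pairwise disjoint. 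This is possible because $\Lambda(\beta)$ has multiplicative length comparable to $\delta$ (its endpoints are $2^{-\beta}(1\pm\delta)$, a ratio $\approx 1+2\delta$), so consecutive $\beta^{(i)}$ need only be separated by a constant multiple of $\delta/\ln 2$; since the host interval of $\beta$-values has length $1/2$, we fit $m \geq c_0/\delta$ of them.

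Then, since the $\Lambda(\beta^{(i)})$ are disjoint subsets of $[2^{-(j+1/4)}, 2^{-(j-1/4)}]$ and $n$ is additive over disjoint sets and monotone, we have $\sum_{i=1}^{m} n(\Lambda(\beta^{(i)})) \leq M_j$. By pigeonhole there is an index $i$ with $n(\Lambda(\beta^{(i)})) \leq M_j/m \leq (C'\delta) M_j$, and taking $\beta_j := \beta^{(i)} \in [j-1/4, j+1/4]$ and $C := C'$ gives exactly the asserted bound. The dependence of the implicit constant on $j$ is eliminated because the whole construction (number of intervals, their spacing) depends only on $\delta$ and on the fixed geometric ratio $2$, not on $j$.

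The main obstacle — really the only subtle point — is making the disjointness-and-counting bookkeeping uniform in $j$ and valid for all small $\delta$: one must check that for $\delta$ below an absolute threshold every $\Lambda(\beta)$ with $\beta\in[j-1/4,j+1/4]$ genuinely sits inside $[2^{-(j+1/4)},2^{-(j-1/4)}]$ (so the monotonicity/additivity of $n$ applies), and that consecutive chosen intervals are truly disjoint (which requires $2^{-\beta^{(i)}}(1+\delta) \le 2^{-\beta^{(i+1)}}(1-\delta)$, i.e.\ $\beta^{(i+1)}-\beta^{(i)} \ge \log_2\frac{1+\delta}{1-\delta}$, an inequality on the spacing that is linear in $\delta$ to leading order). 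Both are elementary once one writes out the logarithms, and neither interacts with the operator $A(0)$ at all — Lemma \ref{b1} is a purely combinatorial statement about an arbitrary nondecreasing set function $n$, so I would phrase the proof entirely in those terms.
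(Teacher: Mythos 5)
Your proof is correct and essentially the same as the paper's: both pack roughly $1/\delta$ pairwise-disjoint intervals of the form $[2^{-\beta}(1-\delta),2^{-\beta}(1+\delta)]$ inside $[2^{-(j+1/4)},2^{-(j-1/4)}]$ and then apply a counting/pigeonhole argument, the only cosmetic difference being that the paper runs the pigeonhole as a proof by contradiction while you phrase it as a direct averaging bound. The one imprecise sentence is the claim that for small $\delta$ every $\Lambda(\beta)$ with $\beta\in[j-1/4,j+1/4]$ lies inside the host interval (it fails at the endpoints for all $\delta>0$), but this is harmless since you only ever use finitely many $\beta^{(i)}$ drawn from a slightly shrunk range — which is also implicitly what the paper does.
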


\begin{proof}
First, there exist $\delta_{1} , \varepsilon_{1} > 0$ such that, for all $0 < \delta < \delta_{1}$, one can find disjoint intervals
\begin{equation*}
I_{k} ( \delta ) \subset [ 2^{- \frac{1}{4}} , 2^{\frac{1}{4}} ] ,
\end{equation*}
of the form $[2^{-\beta}(1-\delta),2^{-\beta}(1+\delta)]$ for some $\beta \in [ - \frac{1}{4} , \frac{1}{4} ]$, with integer $k \in [0,\varepsilon_{1} / \delta]$.

Now assume that the assertion of the lemma is not true. Then, for all $C > 0$ and $\delta_{0} > 0$, there exists $0 < \delta < \delta_{0}$ and $j \in \N$ such that, for all $\beta \in [ j - \frac{1}{4} , j + \frac{1}{4} ]$,
\begin{equation*}
n \big( \big[ 2^{-\beta}(1-\delta) , 2^{-\beta}(1+\delta) \big] \big) > C \delta \; n \big( \big[ 2^{- ( j + \frac{1}{4} )} , 2^{- ( j - \frac{1}{4} )} \big] \big) .
\end{equation*}
We choose $C = 2 / \varepsilon_{1}$ and $\delta_{0} = \delta_{1}$. Using the intervals $I_{k} ( \delta )$ constructed previously, we get
\begin{align}
n \big( \big[ 2^{- ( j + \frac{1}{4} )} , 2^{- ( j - \frac{1}{4} )} \big] \big) &\geq n \bigg( \bigcup_{k=0}^{\varepsilon_{1} / \delta} 2^{-j} I_{k} ( \delta ) \bigg) = \sum_{k=0}^{\varepsilon_{1} / \delta} n \big( 2^{-j} I_{k} ( \delta ) \big)   \nonumber \\
&> \frac{\varepsilon_{1}}{\delta} C \delta \; n \big( \big[ 2^{- ( j + \frac{1}{4} )} , 2^{- ( j - \frac{1}{4} )} \big] \big) = 2 \; n \big( \big[ 2^{- ( j + \frac{1}{4} )} , 2^{- ( j - \frac{1}{4} )} \big] \big) ,
\end{align}
which gives a contradiction.
\end{proof}

Combining this lemma with Theorem \ref{a17}, we prove Theorem \ref{a17b}.

\begin{proof}[Proof of Theorem \ref{a17b}]
We consider the sequence $(\beta_j)_j$ constructed in Lemma \ref{b1}. For $\delta > 0$ small enough, let $r ( \delta ) > 0$ be such that Theorem \ref{a17} and Remark \ref{c2} with $a_{-} = a_{+} = 1$, $b_{-} = 2^{\frac{1}{2}}$ and $b_{+} = 2^{\frac{9}{4}}$ hold true for $0 < r < r ( \delta )$. In the sequel, $M ( \delta )$ will denote the smallest integer for which $2^{- \beta_{M ( \delta )}} < r ( \delta )$, and $N ( r)$, $0 < r < r ( \delta )$, will denote  the unique integer such that $2 r < 2^{- N (r)} \leq 4 r$.

By the disjoint decomposition
\begin{equation*}
\s_{\theta}(r,1)=\s_{\theta} \big( r , 2^{-\beta_{N (r)}} \big) \bigcup \bigcup_{j=M ( \delta )}^{N (r) -1}
\, \s_{\theta}(2^{-\beta_{j+1}},2^{-\beta_j}) \bigcup \s_{\theta} \big( 2^{-\beta_{M ( \delta ) }},1 \big) ,
\end{equation*}
we have
\begin{equation} \label{b2}
\cN ( \s_{\theta}(r,1) ) = \cN \big( \s_{\theta} \big(r,2^{-\beta_{N (r)}} \big) \big) + \sum_{j= M ( \delta )}^{N (r) -1} \cN \big( \s_{\theta} \big( 2^{-\beta_{j+1}},2^{-\beta_j} \big) \big) + \cN \big( \s_{\theta} \big( 2^{-\beta_{M ( \delta )}} , 1 \big) \big) .
\end{equation}
By construction of $\beta_j$, we have $2^{-\beta_{N (r)}} \in r ] 2^{\frac{3}{4}} , 2^{\frac{9}{4}} ]$ and
\begin{equation*}
\s_{\theta} \big( 2^{-\beta_{j+1}},2^{-\beta_j} \big) = 2^{-\beta_{j+1}} \s_{\theta} \big( 1,2^{\beta_{j+1}-\beta_j} \big) , \qquad \s_{\theta} \big( 1,2^{\frac12} \big) \subset \s_{\theta} \big( 1,2^{\beta_{j+1}-\beta_j} \big) \subset \s_{\theta} \big( 1,2^{\frac32} \big) .
\end{equation*}
Then from Theorem \ref{a17}, Remark \ref{c2}, and Lemma \ref{b1}, we get
\begin{align}
\cN \big( \s_{\theta} \big( r,2^{-\beta_{N ( r )}} \big) \big) &= n \big( \big[ r , 2^{-\beta_{N ( r )}} \big[ \big) + \cO \Big( \vert \ln \delta \vert^{2} \Big( n ( [ r(1-\delta) , r(1+\delta ) ] )     \nonumber  \\
&\qquad \qquad \qquad \qquad \qquad + n \big( \big[ 2^{-\beta_{N ( r )}} (1-\delta) , 2^{-\beta_{N ( r )}} (1+\delta) \big] \big) \Big) \Big)   \nonumber  \\
&= n \big( \big[ r , 2^{-\beta_{N ( r )}} \big[ \big) + \cO \big( \vert \ln \delta \vert^{2} \big) n ( [ r(1-\delta) , r(1+\delta ) ] )   \nonumber  \\
&\qquad \qquad \qquad \qquad \qquad + \cO \big( \delta \vert \ln \delta \vert^{2} \big) n \big( \big[ 2^{- N ( r ) - \frac{1}{4}} , 2^{- N ( r ) + \frac{1}{4}} \big] \big)   ,  \label{b3}
\end{align}
and, for $M ( \delta ) \leq j \leq N(r) -1$,
\begin{align}
\cN \big( \s_{\theta} \big( 2^{-\beta_{j+1}} , 2^{-\beta_j} \big) \big) &= n \big( \big[ 2^{-\beta_{j+1}} , 2^{-\beta_j} \big[ \big) + \cO \Big( \vert \ln \delta \vert^{2} \Big( n \big( \big[ 2^{-\beta_j} (1-\delta) , 2^{-\beta_j} (1+\delta) \big] \big)   \nonumber  \\
&\qquad \qquad \qquad \qquad \qquad + n \big( \big[ 2^{-\beta_{j+1}} (1-\delta) , 2^{-\beta_{j+1}} (1+\delta) \big] \big) \Big) \Big)    \nonumber \\
&= n \big( \big[ 2^{-\beta_{j+1}} , 2^{-\beta_j} \big[ \big) + \cO \big( \delta \vert \ln \delta \vert^{2} \big) n \big( \big[ 2^{- j - \frac{1}{4}} , 2^{- j + \frac{1}{4}} \big] \big) \nonumber  \\
&\qquad \qquad \qquad \qquad \qquad + \cO \big( \delta \vert \ln \delta \vert^{2} \big) n \big( \big[ 2^{- j - 1 - \frac{1}{4}} , 2^{- j - 1 + \frac{1}{4}} \big] \big)  .
\end{align}
Moreover, we can write
\begin{equation} \label{b4}
\cN \big( \s_{\theta} \big( 2^{-\beta_{M ( \delta )}} , 1 \big) \big) = n \big( \big[ 2^{-\beta_{M ( \delta )}} , 1 \big] \big) + \cO_{\delta} (1) .
\end{equation}

Combining \eqref{b2} with \eqref{b3}--\eqref{b4}, we deduce
\begin{align}
\cN(\s_{\theta}(r,1)) ={}& n([r,1]) + \cO \big( \vert \ln \delta \vert^{2} \big) n ( [ r(1-\delta) , r(1+\delta ) ] )    \nonumber \\
&+ \cO \big( \delta \vert \ln \delta \vert^{2} \big) \sum_{j=M ( \delta )}^{N (r)} n \big( \big[ 2^{- j - \frac{1}{4}} , 2^{- j + \frac{1}{4}} \big] \big) + \cO_{\delta} (1)  \nonumber  \\
={}& n([r,1]) \big( 1 + \cO \big( \delta \vert \ln \delta \vert^{2} \big) \big) + \cO \big( \vert \ln \delta \vert^{2} \big) n ( [ r(1-\delta) , r(1+\delta ) ] ) + \cO_{\delta} (1) ,   \label{b5}
\end{align}
since we have
\begin{equation*}
\bigcup_{j = M ( \delta )}^{N (r)}  \big[ 2^{- j - \frac{1}{4}} , 2^{- j + \frac{1}{4}} \big] \subset [r , 1 ],
\end{equation*}
the union on the left hand side being disjoint. This concludes the proof of Theorem \ref{a17b}.
\end{proof}

In order to prove Corollary \ref{a17c}, we need the following

\begin{lemma}\sl \label{b6}
Let $\Psi : ]0,1[ \longrightarrow \R$ be a non-increasing function such that $\Psi (r) \geq 1$ and $\Psi(r)=\cO(r^{-\gamma})$, $\gamma>0$, on $] 0,1 [$. Then, there exists $C > 0$ such that, for any $\delta > 0$ small enough and any $\rho > 0$, there exists $0 < r \leq \rho$ satisfying
\begin{equation*}
\Psi (r(1-\delta))-\Psi (r(1+\delta)) \leq C \delta \Psi (r) .
\end{equation*}
\end{lemma}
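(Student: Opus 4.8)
The plan is to argue by contradiction in the same spirit as the proof of Lemma \ref{b1}. Suppose the conclusion fails: then for every $C>0$ and every $\delta_0>0$ there exist $0<\delta<\delta_0$ and $0<r\le\rho$ with
\[
\Psi(r(1-\delta))-\Psi(r(1+\delta))>C\delta\Psi(r).
\]
The idea is that one such ``bad'' dyadic-type scale forces, by the monotonicity of $\Psi$, a whole cascade of bad scales, and summing the corresponding jumps telescopes to something strictly larger than the total variation of $\Psi$ over a fixed interval, which is controlled by the growth bound $\Psi(r)=\cO(r^{-\gamma})$. This is a contradiction for $C$ chosen large enough (depending only on $\gamma$).

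Concretely, first I would reduce to a multiplicative grid: fix $r\le\rho$ bad, and for $k\ge 0$ set $r_k := r(1+\delta)^{-k}$ (or a comparable geometric sequence adapted to the factor $(1-\delta)/(1+\delta)$, chosen so the intervals $[r_k(1-\delta),r_k(1+\delta)]$ are essentially disjoint and nested inside a fixed interval like $[r',1]$ for a suitable $r'$ comparable to $r$). One applies the assumed bad inequality at a scale, then uses that $\Psi$ is non-increasing to transfer badness to the next scale down, or more efficiently one simply sums: since $\Psi$ is non-increasing, for disjoint intervals $[a_k,b_k]$ with $b_{k+1}\le a_k$ one has $\sum_k(\Psi(a_k)-\Psi(b_k))\le \Psi(\inf a_k)-\Psi(\sup b_k)$, and the number of scales between $r$ and $1$ is of order $\delta^{-1}|\ln r|$. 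Then $\sum_k C\delta\Psi(r_k)$ has a lower bound of order $C\delta\cdot\delta^{-1}|\ln r|\cdot 1 = C|\ln r|$ (using $\Psi\ge 1$), while the telescoped upper bound is at most $\Psi(r(1-\delta))\le C' r^{-\gamma}$. Comparing $C|\ln r|$ with $C' r^{-\gamma}$ as $r\searrow0$ is the wrong direction, so this naive counting must be refined.

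The honest route, I expect, is the averaging/pigeonhole argument used for Lemma \ref{b1}: rather than iterating, one observes that \emph{if} the conclusion failed for a fixed small $\delta$ and some $\rho$, then in fact it would fail for \emph{all} $r\le\rho$ (the negation of the statement as I wrote it only gives one $r$, but the contrapositive one actually wants is: assume for a well-chosen $C$ there is no such $r$ in $(0,\rho]$; then for every $r\le\rho$, $\Psi(r(1-\delta))-\Psi(r(1+\delta))>C\delta\Psi(r)$). Now pick $\varepsilon_1>0$ and, for $\delta$ small, a family of $\sim\varepsilon_1/\delta$ disjoint intervals $I_k(\delta)=[2^{-\beta_k}(1-\delta),2^{-\beta_k}(1+\delta)]\subset[2^{-1/4},2^{1/4}]$ as in Lemma \ref{b1}. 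Summing the assumed strict inequality over the scales $r_k=2^{-\beta_k}$ and using disjointness together with monotonicity of $\Psi$,
\[
\Psi(2^{-1/4})-\Psi(2^{1/4})\ \ge\ \sum_{k}\big(\Psi(r_k(1-\delta))-\Psi(r_k(1+\delta))\big)\ >\ \sum_k C\delta\,\Psi(r_k)\ \ge\ C\delta\cdot\frac{\varepsilon_1}{\delta}\cdot\Psi(2^{1/4}),
\]
using $\Psi$ non-increasing so $\Psi(r_k)\ge\Psi(2^{1/4})$ and $\Psi\ge 1$ so $\Psi(2^{1/4})\ge 1$. Choosing $C=2(\Psi(2^{-1/4})-\Psi(2^{1/4}))/(\varepsilon_1\Psi(2^{1/4}))$, or more robustly $C>2(\Psi(2^{-1/4})-\Psi(2^{1/4}))/\varepsilon_1$ together with $\Psi\ge1$, yields $\Psi(2^{-1/4})-\Psi(2^{1/4})>2\Psi(2^{1/4})\ge\cdots$ contradicting finiteness of $\Psi(2^{-1/4})$; one gets the contradiction as soon as the right side exceeds the (finite, $\rho$-independent) left side.

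The main obstacle, and the place to be careful, is the interplay of quantifiers: the constant $C$ in the statement must be \emph{uniform} in $\delta$ and $\rho$, so it has to be extracted from $\Psi$ alone (via $\gamma$ and the constant in $\Psi=\cO(r^{-\gamma})$), not from the particular bad scale. This is exactly why one cannot simply iterate from a single bad $r$ but must run the disjoint-intervals averaging argument, whose disjoint family $I_k(\delta)$ lives in a \emph{fixed} window (here $[2^{-1/4},2^{1/4}]$, after rescaling by $2^{-j}$ if one wants windows near an arbitrary dyadic scale); the hypothesis $\Psi(r)=\cO(r^{-\gamma})$ enters only to guarantee $\Psi$ is finite and its oscillation over such a fixed window is bounded, which makes the right-hand side of the displayed chain beat the left-hand side for $C$ large. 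I would also double-check the endpoint/rounding issues in choosing the $\beta_k\in[-\tfrac14,\tfrac14]$ so that the $\sim\varepsilon_1/\delta$ translates $I_k(\delta)$ are genuinely pairwise disjoint and contained in the window, exactly as in the proof of Lemma \ref{b1}.
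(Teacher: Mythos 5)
Your proposal has a genuine gap, and it occurs precisely at the place you flagged as requiring care. You correctly note that the bad scales $r$ granted by the negation are confined to $(0,\rho]$ with $\rho$ possibly tiny, and hence that the window $[2^{-1/4},2^{1/4}]$ must be rescaled to $[2^{-j-1/4},2^{-j+1/4}]$ for some large $j$ so that its points are $\leq\rho$. But after this rescaling, the telescoped upper bound on your sum becomes $\Psi(2^{-j-1/4})-\Psi(2^{-j+1/4})$, and this oscillation is \emph{not} bounded independently of $j$ (hence of $\rho$): the hypothesis $\Psi(r)=\cO(r^{-\gamma})$ only gives $\Psi(2^{-j-1/4})=\cO(2^{\gamma j})$, which blows up as $j\to\infty$. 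Your claim that ``the hypothesis $\Psi(r)=\cO(r^{-\gamma})$ enters only to guarantee $\Psi$ is finite and its oscillation over such a fixed window is bounded'' is therefore false; the displayed chain with a \emph{single} window yields only $\Psi(2^{-j-1/4})>(1+C\varepsilon_1)\Psi(2^{-j+1/4})$, a multiplicative jump, not a contradiction. Moreover, choosing $C$ in terms of $\Psi(2^{\pm1/4})$ breaks the required uniformity of $C$, since in the negation $\rho$ (and so $j$) may depend on $C$.

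The missing step is iteration. One needs to run the multiplicative jump $\Psi(2^{-j-1/4})>(1+C\varepsilon_1)\Psi(2^{-j+1/4})$ across all admissible $j$ to produce exponential growth $\Psi(2^{-j_0-K/2+1/4})\gtrsim(1+C\varepsilon_1)^K$, and then pit this against the polynomial bound $\Psi(r)=\cO(r^{-\gamma})$ to force $\ln(1+C\varepsilon_1)\leq\tfrac{\gamma}{2}\ln 2$, a bound on $C$ depending only on $\gamma$ and $\varepsilon_1$; that is what contradicts ``for all $C$.'' The paper's own proof skips the Lemma \ref{b1}-style averaging altogether and iterates directly at the fine scale: changing variable $r\mapsto\sigma r$ with $\sigma=(1-\delta)/(1+\delta)$ turns the assumed bad inequality (together with monotonicity and $\Psi\geq1$) into $\Psi(\sigma r)\geq(1+C\delta)\Psi(r)$ for all $r\in(0,\rho]$; iterating $K$ times gives $\Psi(\sigma^K\rho)\geq(1+C\delta)^K\Psi(\rho)$, comparison with $\Psi(\sigma^K\rho)=\cO(\sigma^{-\gamma K})$ forces $\ln(1+C\delta)\leq\gamma|\ln\sigma|$, and a Taylor expansion in $\delta$ yields $C\leq 2\gamma$, contradicting ``for all $C$.'' So the averaging over $\varepsilon_1/\delta$ subscales is superfluous here; the essential mechanism in both routes is the iteration producing exponential growth versus the polynomial growth hypothesis, and that iteration is what your write-up omits.
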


\begin{proof}
Assume that the result is not true. Then, for all $C , \delta_{1} > 0$, there exists $\rho > 0$ and $0 < \delta < \delta_{1}$ such that, for all $0 < r \leq \rho$, we have
\begin{equation*}
\Psi (r(1-\delta))-\Psi (r(1+\delta)) \geq C \delta \Psi (r).
\end{equation*}
Changing the variables $r \longmapsto \sigma r$ with $\sigma=\frac{1 - \delta}{1 + \delta}$, and using the monotonicity and the lower bound of $\Psi$, we get
\begin{equation*}
\Psi ( \sigma r)\geq (1+ C \delta) \Psi (r), \quad r \in ]0,\rho].
\end{equation*}
Then, for any $K\in \N$,
\begin{equation} \label{c3}
\Psi ( \sigma^{K} \rho ) \geq (1+ C \delta)^K \Psi (\rho).
\end{equation}
On the other hand, we have, by assumption,
\begin{equation} \label{c4}
\Psi ( \sigma^{K} \rho ) \leq \cO ( \sigma^{- \gamma K} ) .
\end{equation}
Since \eqref{c3} and \eqref{c4} hold (uniformly) for all $K \in \N$, we deduce
\begin{equation*}
\ln ( 1 + C \delta ) \leq \gamma \vert \ln \sigma \vert .
\end{equation*}
Now, letting $\delta_{1}$ (and, hence, $\delta$) tend to $0$, we find that the  Taylor expansion in $\delta$ yields
\begin{equation*}
C \leq 2 \gamma ,
\end{equation*}
for all $C > 0$. We get a contradiction.
\end{proof}

\begin{proof}[Proof of Corollary \ref{a17c}]
We construct the sequence $( r_{k} )_{k}$ the following way. Let $\delta > 0$ be small enough such that
\begin{equation} \label{end1}
\cO \big( \delta \vert \ln \delta \vert^{2} \big) \leq \frac{1}{k} \qquad \text{and} \qquad C \delta \cO \big( \vert \ln \delta \vert^{2} \big) \leq \frac{1}{k} ,
\end{equation}
where the $\cO$'s are the ones appearing in Theorem \ref{a17b} and $C$ is the constant given in Lemma \ref{b6}. Since $n ([r,1]) \to +\infty$  as $r \searrow 0$, one can find $0 < \rho \leq 2^{-k}$ such that
\begin{equation} \label{end2}
\cO_{\delta} (1) \leq \frac{n([ \rho ,1])}{k} .
\end{equation}
Now, applying Lemma \ref{b6} to the function $\Psi(r):= n([r,1])$ with $C$ and $\delta$ as before, we deduce that there exists $r_{k} \leq \rho$ such that
\begin{equation} \label{end3}
n ( [ r_{k} (1-\delta) , r_{k} (1+\delta ) ] ) \leq C \delta \; n ( [ r_{k} , 1 ] ) .
\end{equation}
By $r_k \in ]0,2^{-k}]$, the positive sequence $(r_k)_{k \in \N}$ tends to 0.
Combining  estimates \eqref{end1}--\eqref{end3} with Theorem \ref{a17b}, we find that
\begin{equation*}
\big\vert \cN(\s_{\theta}(r_{k},1)) - n([r_{k},1]) \big\vert \leq \frac{3}{k} n([r_{k},1]) .
\end{equation*}
which implies \eqref{end4}.
\end{proof}

\begin{proof}[Proof of Corollary \ref{a17d}]
If $n([r,1])= \Phi(r) (1+o(1))$ with
\begin{equation*}
\Phi ( r ( 1 \pm \delta ) ) = \Phi (r) ( 1 + o (1) + \cO ( \delta ) ) ,
\end{equation*}
then $n([r(1-\delta),r(1+\delta)])=n([r,1]) (o(1)+ \cO(\delta))$. In particular, if in addition $\Phi (r)$ tends to infinity, Theorem \ref{a17b} implies that
\begin{equation*}
\cN (\s_{\theta} (r,1 ) ) = \Phi (r) (1 + o(1)) , \qquad r \searrow 0.
\end{equation*}
Thus, Corollary \ref{a17d} follows from the estimates:

$\bullet$ If $\Phi(r) = r^{-\gamma}$, $\gamma >0$, then $\Phi(r(1\pm \delta))=  r^{-\gamma} (1 \pm \delta)^{-\gamma}=\Phi(r) (1 + \cO(\delta))$;

$\bullet$ If $\Phi(r) = \vert \ln r \vert^\gamma$, $\gamma >0$, then $\Phi(r(1\pm \delta))=   \vert \ln r \vert^\gamma \Big( 1 + \frac{\ln (1\pm \delta)}{\ln r}\Big)^\gamma = \Phi(r) (1+o(1))$;

$\bullet$ If $\Phi(r)= \frac{\vert \ln r \vert}{\ln \vert \ln r \vert}$, then $\Phi(r(1\pm \delta))= \frac{\vert \ln r \vert- \ln (1\pm \delta)}{\ln \vert \ln r \vert + \ln \big( 1 + \frac{\ln (1\pm \delta)}{\ln r} \big)}= \Phi(r) (1+o(1))$.
\end{proof}

\section{Application to the counting function of magnetic resonances}\label{s5}

In this section, we apply the results of Section \ref{s2} to the
counting function of resonances of magnetic Schr\"{o}dinger
operators near the Landau levels $2bq$. Let $H_{0}$ be the free Hamiltonian defined in \eqref{gdr0} and \eqref{gdr10}. The selfadjoint operator
$H_0$ is first defined on $C^{\infty}_0(\R^3)$, and then is closed
in $L^2(\R^3)$.  On the domain of $H_0$, we introduce $H : =H_0 +
V$ where $V : \R^3 \longrightarrow \R$ is an appropriate electric
potential. More precisely, we assume that $V$ is Lebesgue
measurable, and satisfies  \eqref{s21} for some $\mper
>0$ and $N>0$.  Under this assumption, the operator $H$ is
selfadjoint with essential spectrum $[0,+\infty[$, and its
resonances near the real axis are defined as the poles of the
meromorphic extension of the resolvent $z \longmapsto (H -
z)^{-1}$ considered as an element of ${\mathcal L} (e^{-N \langle
x_{3} \rangle} L^2 (\R_{\bf x}^{3}) , e^{N \langle x_{3}
\rangle} L^2 (\R_{\bf x}^{3}))$  (for
more details see \cite{BoBrRa07_01}). In Proposition \ref{c8} below, we describe a useful local characterization of the resonances of $H(b,V)$ near a given Landau level $2 b q$, $q \in {\mathbb N}$.
First, near $2 b q$, we parametrize $z$ by
$2 b q + k^2$, and we have

\begin{proposition}[{\cite[Lemma 1]{BoBrRa07_01}}]\sl \label{c6}
For $V$ satisfying \eqref{s21} and $q \in \N$, the operator valued
function
\begin{equation*}
k \longmapsto T_{V,q} (k) : = J \vert V \vert^{\frac{1}{2}} \big( H_0-2b q - k^2 \big)^{-1} \vert V \vert^{\frac{1}{2}}, \qquad J : = \sign V ,
\end{equation*}
defined in $]0,\sqrt{2b}[ e^{i]0, \pi/2[}$, has an analytic
extension to the set $\D \setminus \{ 0 \}$ where $\D : = \{k \in
\C ; \ 0 \leq \vert k \vert < \min ( \sqrt{2 b} , N ) \}$.
\end{proposition}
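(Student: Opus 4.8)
The plan is to reduce the claim to the analytic continuation of the free resolvent $(H_0 - 2bq - k^2)^{-1}$, sandwiched between the exponentially decaying weights $\vert V\vert^{1/2}$, by using the tensor-product structure \eqref{gdr10} of $H_0$. Writing $H_0 = H_{\mathrm{Landau}} \otimes I_3 + I_\perp \otimes (-\partial^2/\partial x_3^2)$ and using the spectral decomposition of the Landau Hamiltonian, let $p_q$ denote the orthogonal projection in $L^2(\R^2_{\xp})$ onto $\ker(H_{\mathrm{Landau}} - 2bq)$. Then, at least for $k$ in the original domain $]0,\sqrt{2b}[\,e^{i]0,\pi/2[}$, one has the decomposition
\begin{equation*}
\big(H_0 - 2bq - k^2\big)^{-1} = \sum_{n \geq 0} p_n \otimes \big( - \tfrac{\partial^2}{\partial x_3^2} + 2b(n-q) - k^2 \big)^{-1},
\end{equation*}
and the idea is to split this sum into the single ``resonant'' term $n = q$ and the ``non-resonant'' remainder $n \neq q$. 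First I would treat the remainder: for $n \neq q$ the one-dimensional operator $-\partial^2/\partial x_3^2 + 2b(n-q) - k^2$ has spectrum bounded below by $2b(n-q) - \vert k\vert^2$, which stays away from $0$ provided $\vert k\vert < \sqrt{2b}$; hence each resolvent $(-\partial^2/\partial x_3^2 + 2b(n-q) - k^2)^{-1}$ extends holomorphically in $k$ to $\{\vert k\vert < \sqrt{2b}\}$, with a norm bound that is summable in $n$ after multiplication by the rapidly decaying weights. Here the exponential decay \eqref{s21} of $V$ in $x_3$ is what gives the needed decay of $\vert V\vert^{1/2}(-\partial^2/\partial x_3^2 + \mu)^{-1}\vert V\vert^{1/2}$ in $\mu$, making the series converge in operator norm and defining an analytic function on $\D$ (no puncture needed for this piece).

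The heart of the matter is the single term $n = q$, namely $p_q \otimes (-\partial^2/\partial x_3^2 - k^2)^{-1}$. The one-dimensional free resolvent $(-\partial^2/\partial x_3^2 - k^2)^{-1}$ has the explicit integral kernel $\frac{i}{2k} e^{ik\vert x_3 - x_3'\vert}$ for $\im k > 0$, and this kernel is manifestly meromorphic in $k$ on all of $\C$ with a single simple pole at $k = 0$. Conjugating by $e^{-N\vert x_3\vert}$, one checks that $e^{-N\vert x_3\vert}(-\partial^2/\partial x_3^2 - k^2)^{-1}e^{-N\vert x_3\vert}$, with kernel $\frac{i}{2k} e^{-N\vert x_3\vert} e^{ik\vert x_3 - x_3'\vert} e^{-N\vert x_3'\vert}$, extends holomorphically in $k$ from the upper half-plane to the strip $\{\vert \im k\vert < N\}$ away from $k = 0$: the only obstruction to extension is the gain factor $e^{ik\vert x_3-x_3'\vert}$, which is controlled by $e^{-N\vert x_3\vert}e^{-N\vert x_3'\vert}$ as long as $\vert\im k\vert < N$. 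Since $\vert V\vert^{1/2}$ is dominated by $C e^{-N\langle x_3\rangle}$ by \eqref{s21}, the operator $\vert V\vert^{1/2} p_q (-\partial^2/\partial x_3^2 - k^2)^{-1}\vert V\vert^{1/2}$ (the transversal projection $p_q$ only helps, as $p_q$ is bounded and the decay in $\xp$ is irrelevant here) inherits this meromorphic extension to $\D\setminus\{0\}$ with values in compact (indeed Hilbert–Schmidt) operators — compactness following from the Hilbert–Schmidt estimate on the kernel, which is finite thanks to the decay in both $\xp$ and $x_3$.

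Putting the two pieces together, $T_{V,q}(k) = J\vert V\vert^{1/2}(H_0 - 2bq - k^2)^{-1}\vert V\vert^{1/2}$ extends analytically to $\D\setminus\{0\}$, which is exactly the assertion. The main obstacle I anticipate is the bookkeeping required to justify that the splitting into $n=q$ and $n\neq q$ is legitimate on the original domain and that the non-resonant series converges in operator norm uniformly on compact subsets of $\D$: one needs quantitative bounds of the form $\big\Vert \vert V\vert^{1/2}\big(p_n \otimes (-\partial^2/\partial x_3^2 + 2b(n-q) - k^2)^{-1}\big)\vert V\vert^{1/2}\big\Vert \leq C(n-q)^{-1/2}$ (or better) for $n$ large, together with the trivial bound $\Vert p_n\Vert = 1$, and then invoke analytic continuation by uniqueness from the sector $]0,\sqrt{2b}[\,e^{i]0,\pi/2[}$ to conclude the extension is the claimed one on $\D\setminus\{0\}$. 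A secondary technical point is verifying that $\min(\sqrt{2b},N)$ is the correct radius — $\sqrt{2b}$ coming from the first nonresonant threshold $2b(q\pm1)$, and $N$ from the width of the strip in which $e^{ik\vert x_3-x_3'\vert}$ can be absorbed by the weights.
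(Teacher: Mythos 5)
Your overall strategy — decompose via the Landau channels, isolate the $n=q$ term, and use the exponential weights to continue — is the right one, but there is a genuine gap in your treatment of the non-resonant channels with $n < q$ (i.e.\ when $q \geq 1$). You claim that for $n \neq q$ the operator $-\partial^2/\partial x_3^2 + 2b(n-q) - k^2$ ``has spectrum bounded below by $2b(n-q) - \vert k\vert^2$, which stays away from $0$ provided $\vert k\vert < \sqrt{2b}$,'' and therefore extends holomorphically on all of $\{\vert k\vert < \sqrt{2b}\}$ with no help from the weights. This is false for $n < q$: there $2b(n-q)-\vert k\vert^2 \leq -2b < 0$, and for $k$ real (and small) the spectral parameter $k^2 - 2b(n-q) = k^2 + 2b(q-n)$ is a strictly positive real number lying in $\sigma(-\partial^2/\partial x_3^2)=[0,\infty)$, so the channel-$n$ resolvent is \emph{not} a bounded operator on $L^2(\R)$; these are the open channels below the threshold $2bq$. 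Consequently the exponential decay of $\vert V\vert^{1/2}$ in $x_3$ is not merely needed for summability over $n$: it is essential for the analytic continuation of each individual term with $n < q$, exactly as for $n = q$. The correct treatment of such a channel parallels your $n=q$ argument: the resolvent kernel is $\frac{i}{2\kappa_n}e^{i\kappa_n\vert x_3-x_3'\vert}$ with $\kappa_n := (k^2 + 2b(q-n))^{1/2}$ (appropriate branch), and conjugation by $e^{-N\langle x_3\rangle}$ continues it across the real axis as long as $\vert\im\kappa_n\vert < N$, which one checks holds for $\vert k\vert < \min(\sqrt{2b},N)$. The only difference from $n = q$ is that $\kappa_n$ stays bounded away from $0$ there (because $q-n\geq 1$ and $\vert k\vert<\sqrt{2b}$), so these channels contribute no pole at $k=0$, consistent with the puncture appearing only in the $n=q$ term.

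A secondary point: your proposed bound $\mathcal O((n-q)^{-1/2})$ on the weighted channel operators is not summable, so as stated it would not give norm convergence of the series for $n>q$; either sharpen the power (the Hilbert--Schmidt estimate of the conjugated $1$D kernel already gives roughly $\mathcal O((n-q)^{-3/4})$, and the transversal decay of $\vert V\vert^{1/2}$ improves it further), or avoid the channel-by-channel sum altogether by treating the closed part $P_{>q}(H_0-2bq-k^2)^{-1}P_{>q}$ directly via the spectral theorem, which is bounded in norm by $(2b-\vert k\vert^2)^{-1}$ with no summation needed.
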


Then, using the resolvent equation
\begin{equation} \label{c7}
 \big( I - J \vert V \vert^{\frac{1}{2}} (H - z)^{-1} \vert V \vert^{\frac{1}{2}} \big) \big( I + J \vert V \vert^{\frac{1}{2}} (H_0-z)^{-1} \vert V \vert^{\frac{1}{2}} \big) = I ,
\end{equation}
we obtain the  desired characterization of the resonances of $H$:

\begin{proposition}\sl \label{c8}
Under assumption \eqref{s21}, $z_0=2 b q + k_0^2$ is a
resonance of $H$ near $2 b q$, $q \in \N$, if and only if $k_0$ is
a characteristic value of $I+T_{V,q}(\cdot)$, and the multiplicity of  this
resonance coincides with the multiplicity of the characteristic value defined
in Definition \ref{c16}.
\end{proposition}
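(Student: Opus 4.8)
The plan is to read off the characterization from the resolvent identity \eqref{c7}. Set $z = 2bq + k^2$ with $\im z > 0$ and $|k|$ small, so that $J|V|^{1/2}(H_0-z)^{-1}|V|^{1/2} = T_{V,q}(k)$. From \eqref{c7} one first obtains, as an identity between bounded operators,
\begin{equation*}
\big( I + T_{V,q}(k) \big)^{-1} = I - J |V|^{1/2} (H-z)^{-1} |V|^{1/2},
\end{equation*}
and then, inserting this into the resolvent equation for the pair $(H,H_0)$,
\begin{equation*}
(H-z)^{-1} = (H_0-z)^{-1} - (H_0-z)^{-1} |V|^{1/2} \big( I + T_{V,q}(k) \big)^{-1} J |V|^{1/2} (H_0-z)^{-1} .
\end{equation*}
These two displays, read as identities between meromorphic operator-valued functions on $\D\setminus\{0\}$ after analytic continuation, are what the whole argument rests on.

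Next I would gather the analytic inputs. Since $\sigma(H_0)$ is purely absolutely continuous, the resolvent $(H_0-z)^{-1}$, in the weighted sense of \secref{s5}, continues holomorphically from $\{\im z>0\}$ to $\D\setminus\{0\}$ in the variable $k$; the same computation (carried out in \cite{BoBrRa07_01}) gives holomorphic continuations to $\D\setminus\{0\}$ of the half-sandwiched resolvents $(H_0-z)^{-1}|V|^{1/2}$ and $|V|^{1/2}(H_0-z)^{-1}$ between the appropriate spaces, and by \propref{c6} of $T_{V,q}(\cdot)$ with values in $\cS_\infty$. Moreover \eqref{c7} and the selfadjointness of $H$ force $I+T_{V,q}(k)$ to be invertible whenever $z=2bq+k^2$ lies in the upper half-plane, so the analytic Fredholm theorem applies on the connected set $\D\setminus\{0\}$: the characteristic values of $I+T_{V,q}(\cdot)$ form a discrete subset there, and their multiplicities are well defined by \eqref{c17} (near any such $k_0\ne 0$ one applies \propref{c21} with $Z=\emptyset$, using that $T_{V,q}$ is holomorphic and compact-valued there).

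The equivalence of the poles then follows from the two identities. If $k_0$ is a characteristic value of $I+T_{V,q}$, then $(I+T_{V,q}(\cdot))^{-1}$ has a genuine pole at $k_0$ (\propref{c21}), hence so does $I-J|V|^{1/2}(H-z)^{-1}|V|^{1/2}$, which forces $(H-z)^{-1}$ to have a pole at $z_0=2bq+k_0^2$ — composing a holomorphic operator function with fixed weighting operators cannot create a pole — so $z_0$ is a resonance; conversely, if $z_0=2bq+k_0^2$ is a resonance, the second display shows that $(H-z)^{-1}$ can have a pole at $k_0\in\D\setminus\{0\}$ only if $(I+T_{V,q}(\cdot))^{-1}$ does, all the other factors being holomorphic near $k_0$, so $k_0$ is a characteristic value.

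Finally, for the multiplicities I would recall that in \cite{BoBrRa07_01} the multiplicity of a resonance $z_0$ is defined through a contour integral of the continued resolvent around $z_0$. Using $(I+T_{V,q}(k))^{-1}=I-J|V|^{1/2}(H-z)^{-1}|V|^{1/2}$, the cyclicity of the trace, and the additivity \eqref{gr4} of the index to peel off the holomorphic factors $(H_0-z)^{-1}|V|^{1/2}$ and $J|V|^{1/2}(H_0-z)^{-1}$, I would rewrite that integral, after the substitution $z=2bq+k^2$, as $\Ind_{\gamma}(I+T_{V,q})$ for a small circle $\gamma$ around $k_0$, which by \eqref{c17}--\eqref{c20} is exactly $\mult(k_0)$ in the sense of Definition~\ref{c16}. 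I expect this last step to be the only real obstacle: reconciling the precise normalization of the resonance multiplicity of \cite{BoBrRa07_01} with $\Ind_{\gamma}$, while carefully tracking the weighted spaces and the behaviour at the threshold $k=0$. Everything else is a formal consequence of \eqref{c7} and of the general theory developed in \secref{a19}.
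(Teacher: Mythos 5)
The pole characterization in your proposal is essentially sound: the two identities obtained from \eqref{c7}, combined with the analytic continuations of the weighted and sandwiched resolvents from \cite{BoBrRa07_01} and Proposition \ref{c6}, the invertibility of $I+T_{V,q}(k)$ for $\im z>0$, and the analytic Fredholm theorem, do yield that $z_0=2bq+k_0^2$ is a pole of the continued resolvent if and only if $k_0$ is a characteristic value of $I+T_{V,q}(\cdot)$. (One bookkeeping point you gloss over: under \eqref{s21} the function $|V|^{1/2}e^{N\langle x_3\rangle}$ is unbounded, so in the direction ``characteristic value $\Rightarrow$ resonance'' you are really comparing meromorphic continuations taken with different exponential weights; this is harmless but should be addressed.) However, this part is essentially what \cite[Proposition 3]{BoBrRa07_01} already provides, and the paper simply cites it.

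The genuine gap is in the multiplicity statement, which is the real content of the proposition and which you leave as an acknowledged ``obstacle''. Your plan rests on the guess that in \cite{BoBrRa07_01} the multiplicity of a resonance is defined by a contour integral of the continued resolvent around $z_0$; in fact the resonances and their multiplicities are characterized there through the zeros of a (regularized) determinant of $I+T_{V,q}(k)$ in the variable $k$. With that definition, the identification of the resonance multiplicity with $\mult(k_0)$ of Definition \ref{c16} is exactly the identity \eqref{c9} (or \eqref{gr5} in the trace-class case), namely $\ind_{\gamma}\,{\det}_{p}\big(I+T_{V,q}\big)=\Ind_{\gamma}\big(I+T_{V,q}\big)$ for a small circle $\gamma$ around $k_0$ --- this single observation is the paper's whole proof. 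Your alternative route, via traces of resolvent contour integrals, cyclicity, and \eqref{gr4}, would (if the multiplicity really were defined spectrally through the resolvent) require a nontrivial Gohberg--Sigal type argument that you do not carry out; moreover it ignores the Schatten-class issue that dictates the paper's case distinction: for $m_\perp\in\,]0,2]$ the operator $T_{V,q}(k)$ is not trace class, only in $\cS_p$ for $p>2/m_\perp$, so the unregularized determinant and naive trace manipulations are unavailable and one must use ${\det}_{p}$, which is precisely why \eqref{c9} rather than \eqref{gr5} is invoked. As written, the equality of multiplicities is therefore not proved in your proposal.
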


\begin{proof}
If $m_{\perp} > 2$, Proposition \ref{c8} follows immediately from \cite[Proposition
3]{BoBrRa07_01} and \eqref{c9}.  If
$m_{\perp} \in ]0,2]$, the same proof works using $\det_{p}$ with  $p > 2/ m_{\perp}$.
\end{proof}

In order to formulate our further results, we need the following
notations. Let $p_q$ be the orthogonal projection onto ${\rm ker}(H_{\text{Landau}} - 2bq)$, the Landau
Hamiltonian $H_{\text{Landau}}$ being defined in \eqref{gdr4}. The
operator $p_q$ admits an explicit kernel
\begin{equation*}
{\mathcal P}_{q,b}(\xp,\xp^\prime)=\frac{b}{2\pi} L_q \left( \frac{b \vert \xp - \xp^\prime \vert^2}{2}\right)
\exp \Big( -\frac{b}{4} \big( \vert \xp - \xp^\prime \vert^2 + 2i(x_1x_2^\prime-x_1^\prime x_2) \big) \Big),
\end{equation*}
with $\xp, \xp^\prime \in \R^2$; here $L_q (t) : = \frac{1}{q !}
e^t \frac{d^q ( t^q e^{-t} )}{d t^q}$ are the Laguerre
polynomials. Further, we recall that $I_3$ is the identity
operator in
 $L^2 (\R_{x_3})$. Finally, we denote by $r (z)$  an
operator with integral kernel $\frac12 e^{z \vert
x_3-x^\prime_3\vert}$, $x_3, x_3'\in \R$, depending on the
parameter $z \in \C$.

The following proposition shows that we are in the framework of Section \ref{s2}.

\begin{proposition}[{\cite[Proposition 4]{BoBrRa07_01}}]\sl \label{c15}
Assume that $V$ satisfies \eqref{s21} and fix $q \in \N$. Then for
$k \in {\mathcal D} \setminus \{ 0 \}$, we have
\begin{equation*}
I +T_{V,q} (k) =  I - \frac{A_q ( i k )}{i k} ,
\end{equation*}
where $z \longmapsto A_q(z) \in \cS_\infty(L^2(\R^3))$ is the holomorphic  function given by
\begin{equation} \label{c10}
 A_q(z) = J \vert V \vert^{\frac{1}{2}} p_q \otimes r (z) \vert V \vert^{\frac{1}{2}} - z J \sum_{j \neq q} \vert V \vert^{\frac{1}{2}} ( p_j \otimes I_3 ) \big( D^{2}_{3} + 2 b (j -q ) + z^{2} \big)^{-1} \vert V \vert^{\frac{1}{2}} .
\end{equation}
\end{proposition}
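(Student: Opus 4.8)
The plan is to obtain the claimed identity by inserting the spectral decomposition of the transversal operator $H_{\text{Landau}}$ and carefully isolating the contribution of the $q$th Landau level, which is the one responsible for the singularity of the resolvent at the threshold $2bq$. First I would use the tensor decomposition $L^2(\R^3)=L^2(\R^2_{\xp})\otimes L^2(\R_{x_3})$ together with \eqref{gdr10} to write
\begin{equation*}
\big(H_0 - 2bq - k^2\big)^{-1} = \sum_{j\ge 0} p_j \otimes \big(D_3^2 + 2b(j-q) - k^2\big)^{-1},
\end{equation*}
where the series converges in operator norm once we sandwich it between two copies of $|V|^{1/2}$, since $|V|^{1/2}(H_0 - z)^{-1}|V|^{1/2}$ is compact for $z$ in the relevant region by \eqref{s21} (this is part of Proposition \ref{c6}). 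The key observation is that the $j=q$ term involves $(D_3^2 - k^2)^{-1}$, which has the explicit convolution kernel $\frac{i}{2k}e^{ik|x_3-x_3'|}$ for $\im k>0$; substituting $z=ik$ this becomes $\frac{1}{2iz\cdot(1/i)}\cdots$, i.e. precisely $\frac{1}{ik}r(ik)$ in the notation introduced before the statement, where $r(z)$ has kernel $\frac12 e^{z|x_3-x_3'|}$. For $j\ne q$ the operator $D_3^2 + 2b(j-q)$ is strictly positive (for $j>q$) or one continues analytically past the corresponding lower threshold (for $j<q$); in either case $\big(D_3^2 + 2b(j-q) - k^2\big)^{-1} = \big(D_3^2 + 2b(j-q) + z^2\big)^{-1}$ is holomorphic in $z$ near $0$ and contributes a regular term.

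Carrying this out, I would write
\begin{equation*}
T_{V,q}(k) = J|V|^{1/2}\Big(p_q\otimes \tfrac{1}{ik}r(ik)\Big)|V|^{1/2} + J\sum_{j\ne q}|V|^{1/2}(p_j\otimes I_3)\big(D_3^2+2b(j-q)+z^2\big)^{-1}|V|^{1/2},
\end{equation*}
with $z=ik$. Factoring $\frac{1}{ik}=-\frac{1}{z}$ out of the first term and $-z=-ik$ out of the second (so that what remains is holomorphic through $z=0$), one reads off exactly $T_{V,q}(k) = -\dfrac{A_q(z)}{z}$ with $A_q(z)$ given by \eqref{c10}, hence $I+T_{V,q}(k) = I - \dfrac{A_q(ik)}{ik}$. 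The holomorphy of $z\mapsto A_q(z)$ on a neighborhood of $0$ with values in $\cS_\infty$ and the compactness of each term follow from the decay \eqref{s21}, exactly as in Proposition \ref{c6}: $|V|^{1/2}(p_j\otimes I_3)(\,\cdot\,)|V|^{1/2}$ is Hilbert--Schmidt uniformly in $j$ with summable norms (thanks to the Gaussian localization of the kernels $\mathcal P_{j,b}$ and the exponential decay in $x_3$), while $|V|^{1/2}(p_q\otimes r(z))|V|^{1/2}$ is compact by the same estimate combined with the explicit kernel of $r(z)$.

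The main obstacle — and the place where the exponential hypothesis \eqref{s21} is genuinely used rather than just polynomial decay — is justifying the analytic continuation and the uniform convergence of the series over $j\ne q$ in a complex neighborhood of $k=0$: one must check that $\sum_{j\ne q}\big\||V|^{1/2}(p_j\otimes I_3)(D_3^2+2b(j-q)+z^2)^{-1}|V|^{1/2}\big\|_{\cS_2}$ is finite and holomorphic in $z$, including controlling the finitely many terms with $j<q$ where $(D_3^2+2b(j-q))$ has continuous spectrum down to $2b(j-q)<0$ and the resolvent must be continued across it (this is where $e^{-N|x_3|}$ decay allows meromorphic continuation of $(D_3^2-\zeta)^{-1}$ as an operator between weighted $L^2$ spaces). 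Since all of this is already established in \cite[Proposition 4]{BoBrRa07_01}, I would cite it for these analytic points and present only the algebraic identification above in detail.
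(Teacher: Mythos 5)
Your approach is the correct and natural one; the paper itself does not reprove Proposition~\ref{c15} but simply cites \cite[Proposition~4]{BoBrRa07_01}, and your reconstruction follows exactly the expected route: spectral decomposition of $H_0$ along the Landau levels via \eqref{gdr10}, isolating the singular $j=q$ term, and identifying the explicit kernel of the one-dimensional free resolvent. You also correctly flag that the exponential decay in $x_3$ from \eqref{s21} is what permits the analytic continuation of $(D_3^2+2b(j-q)+z^2)^{-1}$ past the lower thresholds for $j<q$, and that the $j\neq q$ tail is controlled by summable Hilbert--Schmidt estimates.

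There is, however, a pair of compensating sign slips in your key kernel identification. The kernel of $(D_3^2-k^2)^{-1}$ for $\im k > 0$ is $\tfrac{i}{2k}e^{ik|x_3-x_3'|}$; since $\tfrac{i}{k}=-\tfrac{1}{ik}$, as an operator this equals $-\tfrac{1}{ik}\,r(ik)=-\tfrac{1}{z}\,r(z)$ with $z=ik$, not $\tfrac{1}{ik}\,r(ik)$ as you wrote. You then ``factor out $\tfrac{1}{ik}=-\tfrac{1}{z}$'', which is also off by a sign (with $z=ik$ one has $\tfrac{1}{ik}=\tfrac{1}{z}$). The two errors cancel, so your final identity $T_{V,q}(k)=-A_q(z)/z$, hence $I+T_{V,q}(k)=I-\tfrac{A_q(ik)}{ik}$, is correct, but the intermediate lines should be fixed before this is presented as a proof.
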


Consequently, the resonances of $H$ near a fixed Landau level $2 b
q$ coincide with the complex number $2 b q + k^2$ where $k$ is a
characteristic value of $\big( I - \frac{A_q ( i k )}{i k} \big)$
and $A_q$ is given by \eqref{c10}. In particular, $A_q(0)$ is the
operator $J \vert V \vert^{\frac{1}{2}} ( p_q \otimes r (0)) \vert
V \vert^{\frac{1}{2}}$ which is selfadjoint as soon as $J$ is
$\pm I$, i.e. for $V$ of definite sign.

Now we assume that $V$  has a definite sign,
 i.e. $\pm V \geq 0$. In order to apply the results of Section \ref{s2}, we want to know when
\begin{equation}\label{c19}
I - A_q^\prime (0) \Pi_0 \text{ is invertible,}
\end{equation}
where, as earlier, $\Pi_0$ is the orthogonal projection on the
kernel of $A_q(0)$. Writing $A_q(0) = \pm L_q^*L_q$ with $L_q:
L^2(\R^3) \longrightarrow L^2(\R^2)$ defined by
\begin{equation} \label{c12}
(L_q f) ( \xp ) : = \frac{1}{\sqrt{2}} \int_{\R_{\bf x}^{3}}
{\mathcal P}_{q,b} ( \xp , \xp^\prime ) \vert V
\vert^{\frac{1}{2}} ( \xp^\prime , x^\prime_3 ) f ( \xp^\prime ,
x^\prime_3 ) \, d \xp^\prime \, d x^\prime_3, \quad \xp \in \R^2,
\end{equation}
we find that ${\rm ker}\,A_q(0) = {\rm ker}\,L_q$.

\begin{remark}\sl \label{c18}
In general, ${\rm ker}\,A_q(0) = {\rm ker}\,L_q$ is not trivial.
Nevertheless, the assumption \eqref{c19} holds  for generic $V$.
More precisely, if the potential $V$ is fixed, there exists a
finite or infinite discrete set ${\mathcal E} = \{ e_{n} \}$ such
that the operator $H_{e} : = H_{0} + e V$ satisfies \eqref{c19}
for all $e \in \R \setminus {\mathcal E}$. The numbers $1/e_n$ are
in fact the  real non vanishing eigenvalues of the compact
operator $A_q^\prime (0) \Pi_0$. To check this, it is enough to
remark that ${\Pi_{0}}_{\vert_{e V}} = {\Pi_{0}}_{\vert_{V}}$ and
${A_q^{\prime} (0)}_{\vert_{e V}} = e {A_q^{\prime}
(0)}_{\vert_{V}}$ for $e \neq 0$. Note also that, for $\vert e
\vert$ small enough, $H_{e}$  satisfies always \eqref{c19}.
\end{remark}

Under these assumptions, we can apply Theorem \ref{a17b} and its
corollaries. Thus, the distribution of the magnetic resonances
near the Landau level is related to the counting function
\begin{equation*}
n_\pm ( s ; A_q (0) ) = n_+ ( s ; L_q^*L_q ) = n_+ ( s ; L_q L_q^* ) = n_+ ( s ; p_q W p_q ) ,
\end{equation*}
where, for a compact selfadjoint operator $T$, we set $n_{\pm} (
s ; T ) = \rank \one_{\pm [ s , + \infty [} (T)$, and $W$ is the multiplication operator by the function
\begin{equation}\label{c13}
W (\xp) : = \frac{1}{2} \int_{\R} \vert V  ( \xp , x_3 ) \vert \, d x_3, \qquad  \xp \in \R^2.
\end{equation}
Let us introduce three types of assumptions for $W$:

{\rm (A1)} $W \in C^1(\R^2)$ satisfies the estimate
\begin{equation*}
W ( \xp ) = w_0 ( \xp / \vert \xp\vert ) \vert \xp \vert^{-
m_\perp} (1 + o(1)), \qquad \vert \xp \vert \rightarrow + \infty ,
\end{equation*}
where $w_0 $ is a continuous function on $\mathbb{S}^1$ which is
non-negative and does not vanish identically, as well as
\begin{equation*}
 \vert \nabla W ( \xp ) \vert \leq C \langle \xp \rangle^{- m_\perp - 1}, \qquad \xp \in \R^2 ,
\end{equation*}
for some constant $C > 0$. Then, by \cite{Ra90_01}, we have
\begin{equation} \label{gr6}
n_+ (r, p_q W p_q ) = C_{\perp} r^{-2 / m_\perp} (1 + o(1)), \qquad r \searrow 0,
\end{equation}
where
\begin{equation*}
C_{\perp} : = \frac{b}{4 \pi} \int_{\mathbb{S}^1} w_0 (t)^{2 / m_\perp} \, d t.
\end{equation*}

{\rm (A2)} There exists $\beta >0$, $\mu >0$ such that
\begin{equation*}
\ln W ( \xp ) = - \mu \vert \xp \vert^{2 \beta} (1 + o(1)), \qquad  \vert \xp \vert \rightarrow + \infty .
\end{equation*}
Then, by \cite{RaWa02_01}, we have
\begin{equation} \label{gr7}
n_+ (r , p_q W p_q) = \varphi_\beta (r) (1+o(1)), \qquad  r \searrow 0 ,
\end{equation}
where, for $0< r \ll 1$,
\begin{equation*}
\varphi_\beta(r) : = \left\{ \begin{aligned}
&\frac{b}{2} \mu^{- \frac{1}{\beta}} \vert \ln r \vert^{\frac{1}{\beta}} &&\text{if } 0 < \beta < 1 ,  \\
&\frac{1}{\ln (1+ 2 \mu/b)} \vert \ln r \vert &&\text{if } \beta = 1 ,    \\
&\frac{\beta}{\beta - 1} ( \ln \vert \ln r \vert )^{-1} \vert \ln r \vert \qquad &&\text{if } \beta > 1 . \\
\end{aligned} \right.
\end{equation*}

{\rm (A3)} The support of $W$ is compact and there exists a
constant $C>0$ such that $W\geq C$ on an non-empty open subset of
$\R^2$. Then, by \cite{RaWa02_01},  we have
\begin{equation} \label{gr8}
n_+ (r, p_q W p_q ) = \varphi_\infty (r) (1+o(1)), \qquad r \searrow 0 ,
\end{equation}
where, for $0< r \ll 1$,
\begin{equation*}
\varphi_\infty (r) : = ( \ln \vert \ln r \vert )^{-1} \vert \ln r \vert .
\end{equation*}
In particular,  $n_+(r, p_q W p_q) \longrightarrow + \infty$
as $r \searrow 0$, provided that $V$ does not vanish identically.

\begin{theorem}\sl \label{c14}
Let $V: \R^3 \longrightarrow \R$ be a Lebesgue measurable function
of definite sign $\pm$ satisfying \eqref{s21} and \eqref{c19}. Let
$0 < r_{0} < \min ( \sqrt{2 b} , N )$ be fixed. Then,

$i)$ The resonances $z_q (k) = 2 b q + k^2$ of $H = H_0 + V$ with $\vert k \vert$ sufficiently small satisfy
\begin{equation*}
\pm \im k \leq 0 , \qquad \re k = o ( \vert k \vert ) .
\end{equation*}

$ii)$ There exists a sequence $(r_\ell)_\ell \in \R$ which tends to $0$ such that
\begin{equation*}
\# \big\{ z = 2 b q + k^2 \in \res ( H ) ; \ r_\ell < \vert k
\vert \leq r_0 \big\} = n_+ ( r_\ell , p_q W p_q ) (1+o(1)),
\qquad \ell \to + \infty .
\end{equation*}

$iii)$ Eventually, if $W$ satisfies {\rm (A1)}, {\rm (A2)} or {\rm (A3)}, then
\begin{equation*}
\# \big\{ z = 2bq + k^2 \in \res (H) ; \ r < \vert k \vert \leq
r_0 \big\} = n_+ ( r , p_q W p_q) ( 1 + o(1) ), \qquad r \searrow
0 ,
\end{equation*}
the asymptotics of $n_+ ( r , p_q W p_q)$ as $r \searrow 0$ being described in \eqref{gr6}--\eqref{gr8}.
\end{theorem}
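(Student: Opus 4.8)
The plan is to deduce Theorem~\ref{c14} directly from the abstract results of Section~\ref{s2} applied to the holomorphic family $z \mapsto A_q(z)$ described in Proposition~\ref{c15}, via the local characterization of resonances in Proposition~\ref{c8}. First I would record the setup: by Proposition~\ref{c8}, the resonances of $H$ near the Landau level $2bq$ are exactly the points $2bq+k^2$ with $k$ a characteristic value of $I+T_{V,q}(k)$, and by Proposition~\ref{c15} this is $I - A_q(ik)/(ik)$ with $A_q$ holomorphic on $\D$ and $A_q(0) = \pm L_q^*L_q$ selfadjoint (since $V$ has definite sign $\pm$). Thus, after the change of variable $z=ik$, the function $A(z):=A_q(z)$ satisfies the hypotheses of Theorems~\ref{a17} and~\ref{a17b}: $A(0)$ is selfadjoint, and the invertibility of $I-A_q'(0)\Pi_0$ is precisely the standing assumption \eqref{c19}. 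Note also that $\pm A_q(0) = L_q^*L_q \geq 0$ has definite sign, and $n(s\,[0,+\infty[) = n_+(s;A_q(0)) = n_+(s;L_q^*L_q) = n_+(s;L_qL_q^*) = n_+(s;p_q W p_q)$ by the usual identity $n_+(s;T^*T)=n_+(s;TT^*)$ and the computation $L_qL_q^* = \tfrac12 p_q W p_q$ with $W$ as in \eqref{c13}.

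For part $i)$, I would apply Corollary~\ref{a22}: item $i)$ of that corollary gives $|\im z| = o(|z|)$ for characteristic values $z$ of $I-A(z)/z$ near $0$, and item $ii)$, using $\pm A_q(0)\geq 0$, gives $\pm\re z \geq 0$. Translating back through $z=ik$, i.e.\ $\re z = -\im k$ and $\im z = \re k$, yields $\re k = o(|k|)$ and $\pm\im k \leq 0$, which is exactly the claim. For part $ii)$, the domain $\{ r_\ell < |k|\leq r_0\}$ in the $k$-plane corresponds under $z=ik$, up to the sectorial localization already justified by part $i)$ (the characteristic values lie in an arbitrarily thin sector around the real axis), to a region of the form $\s_\theta(r_\ell,r_0) \cup \s_\theta(-r_0,-r_\ell)$ in the $z$-plane; applying Corollary~\ref{a17c} (with its remark that the $\theta$-dependence is inessential, and the remark allowing the right endpoint $r_0$ in place of $1$ by rescaling) to $n([r,1]) = n_+(r;p_qWp_q)$ — which grows unboundedly and is $\cO(r^{-\gamma})$ because $p_qWp_q$ lies in some Schatten class under \eqref{s21} — produces a sequence $r_\ell\searrow 0$ with $\cN(\s_\theta(r_\ell,r_0)) = n_+(r_\ell;p_qWp_q)(1+o(1))$, and by part $i)$ the half with $\re z<0$ carries no characteristic values for $\ell$ large.

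For part $iii)$, under each of {\rm (A1)}, {\rm (A2)}, {\rm (A3)} the function $n_+(r;p_qWp_q)$ has a precise asymptotics of the form $\Phi(r)(1+o(1))$ with $\Phi$ of one of the three types ($Cr^{-\gamma}$, $C|\ln r|^\gamma$, or $C|\ln r|/\ln|\ln r|$) covered by Corollary~\ref{a17d}; applying that corollary, together with part $i)$ to discard the spurious half-sector, gives $\cN(\s_\theta(r,r_0)) = n_+(r;p_qWp_q)(1+o(1))$ as $r\searrow 0$, which is the assertion after transferring back to the $k$-variable and invoking Proposition~\ref{c8} to identify characteristic values with resonances counted with multiplicity.

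The main obstacle I anticipate is bookkeeping rather than conceptual: one must carefully match the geometry of the $k$-disc $\{|k|\leq r_0\}$ with the sectorial domains $\s_\theta(r,1)$ of Theorem~\ref{a17b} under the rotation $z=ik$, handling both $\im k \leq 0$ and the symmetric statement for $\s_\theta(-1,-r)$ noted in the remark after Theorem~\ref{a17b}, and one must verify that the Schatten-class membership of $p_qWp_q$ (hence the polynomial bound \eqref{c23}) genuinely follows from the decay \eqref{s21} of $V$ — this uses the explicit kernel ${\mathcal P}_{q,b}$ and standard estimates on Toeplitz-type operators $p_qWp_q$, as in the references \cite{Ra90_01}, \cite{RaWa02_01}. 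Everything else is a direct citation of the abstract theorems.
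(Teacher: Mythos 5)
Your proposal is correct and follows essentially the same route as the paper: reduce via Propositions~\ref{c8} and~\ref{c15} to characteristic values of $I - A_q(ik)/(ik)$, apply Corollary~\ref{a22} for part~$i)$, then pass to sectorial domains via part~$i)$ and invoke Corollaries~\ref{a17c} and~\ref{a17d} for parts~$ii)$ and~$iii)$. One small slip: the factor $\tfrac12$ is already built into the definition~\eqref{c13} of $W$, so the correct identity is $L_qL_q^* = p_q W p_q$ (not $\tfrac12 p_qWp_q$); this does not affect the argument since you use only that the nonzero spectra of $L_q^*L_q$ and $L_qL_q^*$ coincide.
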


\begin{remark}\sl
$i)$ Under the assumption {\rm (A1)}, we can also apply Theorem \ref{a17} to obtain asymptotics in small domains.

$ii)$ In \cite{BoBrRa07_01}, we have proved that $H$ has an
infinite number of resonances in a vicinity of $0$ for small
potentials $V$ of definite sign such that $W$, defined in
\eqref{c13}, satisfies, for some $C > 0$,
\begin{equation*}
\ln W ( \xp ) \leq - C \< \xp \>^{2} .
\end{equation*}

$iii)$ These results can be generalized to the case of constant magnetic fields of non full rank $2 r$ in an arbitrary dimension $d$. More precisely, the situation $d - 2 r =1$ is close to the one treated in the present paper. Whereas, if $d - 2 r \geq 3$ is odd, it is expected that there is no accumulation of resonances at the Landau levels since the corresponding $A (z)$ is analytic near these thresholds. The case $d - 2 r$ even is different since the weighted resolvent has a logarithmic singularity at the Landau levels.
\end{remark}

\begin{proof}[Proof of Theorem \ref{c14}]
According to Definition \ref{c8} and Proposition \ref{c15}, in
order to study the resonances $ z_q(k)=2bq + k^2$ of $H$, it is
enough to analyze the characteristic values of $\big( I -
\frac{A_q ( i k )}{i k} \big)$ for $A_q$ given by \eqref{c10}.
Since $\pm A_q (0)$ is non negative, $i)$ is a consequence of
Corollary \ref{a22} with $z = i k$.

From $i)$ we deduce that the resonances $z_q (k) = 2 b q + k^2$
are concentrated in the sector $\mp i \s_{\theta} \cap {\mathcal
D}$ for every $\theta >0$ with $\s_{\theta}$ defined by
\eqref{z1}. In particular, as $r$ tends to $0$, we have:
\begin{align*}
\# \big\{ z = 2bq + k^2 \in \res ( & H) ; \ r_\ell < \vert k \vert \leq r_0 \big\} \\
&= \# \big\{ z = 2bq + k^2 \in \res (H) ; \ \pm i k \in \s_{\theta} (r,r_0) \big\} + \cO (1) .
\end{align*}
Since the non-zero eigenvalues of $\pm A_q(0) = L_q^* L_q$, $L_q$
being defined in \eqref{c12}, coincide with these of $L_q L_q^* =
p_q W p_q$, we have $n ( [ r , r_0 ] ) = n_+ (r, p_q W p_q) +
\cO(1)$. Then, parts $ii)$ and $iii)$ follow from Corollary
\ref{a17c} and Corollary \ref{a17d}.
\end{proof}

\begin{corollary}\sl
For generic potentials $V \geq 0$ satisfying \eqref{s21}, the (embedded) eigenvalues of $H$ form a discrete set.
\end{corollary}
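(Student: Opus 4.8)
The plan is to recognize that the (embedded) eigenvalues of $H$ are precisely the real resonances of $H$, so that the statement follows from the local discreteness of the characteristic values of $I + T_{V,q}(\cdot)$ near each Landau level $2bq$, combined with a routine genericity argument. First I would recall that, since $V$ has a definite sign and satisfies \eqref{s21}, an eigenvalue of $H$ embedded in $[0,+\infty[$ at $2bq + k_0^2$ produces, via the Birman--Schwinger type identity \eqref{c7} and Proposition \ref{c8}, a characteristic value $k_0$ of $I + T_{V,q}(\cdot) = I - A_q(ik_0)/(ik_0)$; conversely a real resonance gives an embedded eigenvalue (the eigenfunction being in $L^2$ because the resolvent kernel decays for $k_0$ on the appropriate part of the real axis). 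Thus it suffices to show that, for generic $V \geq 0$, there are no characteristic values of $I - A_q(z)/z$ accumulating at $0$ along the real axis, for every $q \in \N$.

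Next I would invoke Remark \ref{c18}: for a fixed potential $V \geq 0$ there is a discrete set ${\mathcal E}_q = \{e_n^{(q)}\}$ of reals such that $H_e = H_0 + eV$ satisfies the invertibility condition \eqref{c19} for all $e \notin {\mathcal E}_q$, where ${\mathcal E}_q$ consists of the reciprocals of the non-zero real eigenvalues of $A_q'(0)\Pi_0$. Taking ${\mathcal E} := \bigcup_{q \in \N} {\mathcal E}_q$, which is still a countable — hence generic-complement — subset of $\R$, for $e \notin {\mathcal E}$ the operator $A_q(0) = \pm L_q^* L_q$ is selfadjoint and $I - A_q'(0)\Pi_0$ is invertible for every $q$, so Theorem \ref{a17} (or directly Corollary \ref{a22} $i)$ applied with $z = ik$) shows that near each Landau level the real characteristic values $k$ satisfy $\vert \re k\vert = o(\vert k\vert)$ if we are on the imaginary $k$-axis, and more to the point that the characteristic values in a pointed neighbourhood of $0$ that lie on the relevant ray (corresponding to embedded eigenvalues, i.e. $\mp i k > 0$) are isolated. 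Combining this with the analytic Fredholm alternative (Proposition \ref{c21}), which already guarantees discreteness away from $0$, one concludes that near $2bq$ the embedded eigenvalues of $H_e$ form a discrete set, for each $q$; since the Landau levels $2bq$ themselves form a discrete set in $[0,+\infty[$ and away from them $H_0$ has good limiting absorption, the embedded eigenvalues of $H_e$ are discrete in all of $[0,+\infty[$.

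The main obstacle — and really the only non-bookkeeping point — is to rule out an accumulation of embedded eigenvalues exactly \emph{at} a Landau level $2bq$ (i.e. $k \to 0$ along the real $k$-axis, so $z \to 2bq^+$). For this one uses Proposition \ref{p35}: the resolvent $(I - A_q(0)/z)^{-1}$ restricted to the ray $z = ik$ with $k$ real of the appropriate sign is uniformly bounded, because on that ray $\| (I - A_q(0)/(ik))^{-1} \| \leq C$ once $\pm A_q(0) \geq 0$ and $ik$ lies off $[0,+\infty[$ (the spectrum of $A_q(0)$); hence $\cZ(S) \cap \{\vert z\vert < r_0\} = \emptyset$ for a suitable ray $S$, so there are no characteristic values, hence no embedded eigenvalues of $H_e$, in a punctured neighbourhood of $2bq$. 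Finally one should remark that the choice of ${\mathcal E}$ is independent of which statement of this section is being invoked, and that for $\vert e\vert$ small enough $e \notin {\mathcal E}$ automatically, so the corollary is non-vacuous; this completes the proof.
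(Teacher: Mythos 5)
Your proposal is correct and follows essentially the same route as the paper's one-sentence proof, which combines Remark~\ref{c18} (genericity of the invertibility condition~\eqref{c19}) with Theorem~\ref{c14}~$i)$ applied at each Landau level; you simply unpack Theorem~\ref{c14}~$i)$ into the ingredients it rests on (Corollary~\ref{a22} and Proposition~\ref{p35}). Like the paper, you leave the fact that embedded eigenvalues can accumulate only at the Landau levels to a limiting-absorption/Mourre-type remark rather than proving it.
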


This is a consequence of Remark \ref{c18} and Theorem \ref{c14} $i)$
at each Landau level.  In \cite[Proposition 7]{BoBrRa07_01}, it
is proved that, for small potentials $V \geq 0$ satisfying \eqref{t10}
with $m_{\perp} > 0$ and $m_{3} > 2$, there are no eigenvalues outside
of the Landau levels $2 b \N$. Recall that the setting is very
different for non-positive perturbations. Indeed, for a large class of
non-positive potentials, there is an accumulation of embedded
eigenvalues at each Landau level (see \eqref{y1} and the references
\cite{Ra05_01}, \cite{Ra06_01}).  Note that, using the
Mourre theory, it should be possible to show that the eigenvalues may
accumulate only at the Landau levels (see for instance \cite[Theorem
3.5.3]{GeLa02_01} in a slightly different general context).

\section{Necessity of the assumptions of the main results}\label{s7}

In this section we show that all the assumptions of the Theorem
\ref{a17} and Theorem \ref{a17b} are necessary for the claimed
properties in the sense that these results do not hold if one removes one of their hypotheses (it is perhaps possible to
consider other types of assumptions). An artificial reason would
be easily given by examples for which the characteristic values
are not well defined (for instance when $I- \frac{A(z)}{z}$ is
never invertible). However, the proposition below shows that
there are some more fundamental obstructions to the weakening of
the assumptions.

\begin{proposition}\sl \label{lems7}
Even if the assumptions of Proposition \ref{c21} are satisfied,
the conclusions of the theorems of Section \ref{s2} may be false
if one of the following hypotheses is removed:

$i)$ $A (z)$ is compact-valued;

$ii)$ $A(0)$ is selfadjoint;

$iii)$ $I - A^{\prime}(0) \Pi_0$ is invertible.
\end{proposition}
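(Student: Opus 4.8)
The plan is to construct, for each of the three hypotheses, an explicit operator-valued function $A(z)$ (taking values in the trace class, so that all the analytic machinery of Section \ref{a19} applies and the characteristic values are genuinely well defined) for which the conclusion of Theorem \ref{a17} (equivalently Theorem \ref{a17b}) fails. In each case it is enough to work in a very small Hilbert space: $\cH = \C$ or $\cH = \C^2$ suffices, since the failure is already visible at finite rank, and then one can pad with a harmless infinite-rank tail if one wants $A(0)$ of infinite rank. The guiding principle is that the conclusion $\cN(s\Omega) = n(sJ) + \cO(\cdot)$ forces the characteristic values to sit, asymptotically, on the positive real axis (when $A(0)\ge 0$) near the eigenvalues of $A(0)$; so to violate it one exhibits characteristic values that run off into the complex plane, or that fail to match $n$, or that appear where $A(0)$ has no spectrum at all.

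For $iii)$ (the invertibility of $I - A'(0)\Pi_0$), take $\cH = \C$, $A(0) = 0$ (so $\Pi_0 = I$, $n \equiv 0$), and $A(z) = z\,a$ for a constant $a \in \C$. Then $I - A(z)/z = 1 - a$ is either never zero or always zero; choosing $a = 1$ we have $A'(0)\Pi_0 = 1$, so $I - A'(0)\Pi_0 = 0$ is not invertible, and $I - A(z)/z \equiv 0$, so \emph{every} point is a characteristic value --- the characteristic values do not even form a discrete set and Theorem \ref{a17} fails dramatically. A slightly less degenerate variant: $A(z) = z a + z^2$ with $a=1$, giving $I - A(z)/z = -z$, whose only "characteristic value" would be $z=0$, which is excluded; so again $\cN(s\Omega) = 0$ while the hypotheses other than $iii)$ hold --- but here the conclusion $\cN = n + \cO(n) = 0$ is actually satisfied, so one should instead arrange a nonzero $n$ by an infinite-rank summand and show the error term is violated. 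The cleanest choice is the first, fully degenerate one, where the discreteness itself breaks.

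For $ii)$ (selfadjointness of $A(0)$), take $\cH = \C$ and $A(z) = a$ constant with $a \in \C \setminus \R$, say $a = i\varepsilon$. Then $A(0) = i\varepsilon$ is compact (trivially) but not selfadjoint, $\Pi_0 = 0$ so $I - A'(0)\Pi_0 = I$ is invertible, and $I - A(z)/z = 0$ exactly at $z = a = i\varepsilon$. Thus there is a single characteristic value sitting \emph{off the real axis}, at a fixed distance from $0$ in a fixed direction; rescaling, one sees that in the sector $\s_\theta(r,1)$ with $\theta$ small the characteristic value escapes the sector, so $\cN(\s_\theta(r,1))$ does not track $n([r,1])$, contradicting Theorem \ref{a17b} and also Corollary \ref{a22}$i)$. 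To make $A(0)$ of infinite rank while keeping it non-selfadjoint one adds a selfadjoint infinite-rank compact tail on a complementary subspace; the stray characteristic value off the real axis persists, while $n$ is now large --- so the remainder estimate of Theorem \ref{a17} is violated because a characteristic value appears where $n$ sees nothing. The main point to check here is that $\pm\re z \ge 0$ and $\im z = o(|z|)$, asserted in Corollary \ref{a22}, genuinely fail for this example, which is immediate.

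For $i)$ (compactness of $A(z)$) the construction is the most substantive: one wants $A(z)$ with values in bounded (non-compact) operators, $A(0)$ selfadjoint, $I - A'(0)\Pi_0$ invertible, but with characteristic values of $I - A(z)/z$ accumulating at $0$ in a way not governed by $n(\Lambda) = \tr\one_\Lambda(A(0))$ --- for instance with $A(0) = 0$ (so $n \equiv 0$) yet infinitely many characteristic values near $0$, or with $A(0) \ge 0$ yet characteristic values in $\{\re z < 0\}$. A natural device is a shift operator or a multiplication operator on $\ell^2$ or $L^2$: e.g. on $\cH = L^2(\mathbb{T})$ let $A(z) = M_{\phi(z,\cdot)}$ be multiplication by a function $\phi$ depending holomorphically on $z$, with $\phi(0,\cdot) = 0$; then $I - A(z)/z$ fails to be invertible exactly when $z - \phi(z,\theta) = 0$ has a solution $\theta \in \mathbb{T}$, i.e. when $z \in \{\phi(z,\theta): \theta\}$, and by choosing $\phi$ appropriately (for instance $\phi(z,\theta) = c z e^{i\theta}$ with $|c| > 1$, or a lacunary combination) one can force the characteristic values to fill a whole punctured disc or a sector disjoint from $[0,\infty)$. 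I expect \textbf{this step $i)$ to be the main obstacle}: one must produce a holomorphic $\cB(\cH)$-valued $A$ that is genuinely non-compact (so the analytic Fredholm apparatus does not trivialize the conclusion for the wrong reason), keep the hypotheses $ii)$, $iii)$ satisfied, verify that the characteristic values are well defined via Proposition \ref{c21} (which now requires a direct check of finite-meromorphy and Fredholmness, not just compactness of $A$), and then compute or estimate the characteristic-value counting function precisely enough to contradict Theorem \ref{a17} or \ref{a17b}. Multiplication operators make the invertibility of $I - A(z)/z$ pointwise-in-$\theta$ transparent, which is why they are the right vehicle; the remaining work is bookkeeping to confirm all the side conditions of Section \ref{s2} hold.
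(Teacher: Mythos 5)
Your three constructions each contain a real gap, and the gaps all stem from the same oversight: the assertions of Section~\ref{s2} are \emph{asymptotic} statements about characteristic values accumulating at $0$, so a single stray characteristic value at a fixed distance from the origin, or a function that is nowhere invertible, does not contradict them.

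For $iii)$: your preferred choice $A(z)=z$ on $\cH=\C$ gives $I - A(z)/z \equiv 0$, which is \emph{never} invertible; this violates the hypothesis of Proposition~\ref{c21} that $F(z_0)$ be invertible for some $z_0$, so the characteristic values are not defined --- exactly the degenerate situation the preamble of Proposition~\ref{lems7} excludes. Your fallback $A(z)=z+z^2$ gives $I-A(z)/z=-z$ with no nonzero characteristic values and $n\equiv 0$, so the conclusion holds trivially; you acknowledge this but leave the needed patch undone. The paper's construction for $iii)$ is the most delicate of the three: it takes $A(0)$ with eigenvalues $-\lambda_k\to 0^-$ (so $n([r,1])=0$ for $r>0$) and chooses coupling coefficients inductively so that a real characteristic value $x_k$ appears between every $\lambda_{k+1}$ and $\lambda_k$; thus infinitely many characteristic values accumulate at $0^+$ while $n$ vanishes on $(0,\infty)$.

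For $ii)$: your example $A(z)\equiv i\varepsilon$ on $\cH=\C$ has a single characteristic value at the \emph{fixed} point $z=i\varepsilon$. For $s$ small, $s\Omega$ misses it, so $\cN(s\Omega)=0$, and $n(\Lambda)=0$ for $\Lambda\subset\R$ since $\sigma(A(0))=\{i\varepsilon\}\cap\R=\emptyset$; Theorem~\ref{a17} and Corollary~\ref{a22} concern the limit $z\to 0$ and are not touched. Padding with an infinite-rank selfadjoint tail does not help, since the stray value stays at distance $\varepsilon$ from the origin. To break $ii)$ you need the failure to be visible near $0$; the paper takes $A(0)$ block-diagonal with nilpotent $2\times 2$ blocks $B_k$ (so $\sigma(A(0))=\{0\}$, $n\equiv 0$) and $A'(0)$ with blocks $B'_k$ arranged so that the characteristic values are $\alpha_k^2\to 0$, an accumulating sequence that $n$ cannot see.

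For $i)$: you sketch a multiplication-operator idea and correctly flag it as unfinished, but in fact this is the \emph{easiest} of the three in the paper. Take $A(z)=A(0)+2Iz$ with $A(0)$ compact selfadjoint, trivial kernel, eigenvalues $\lambda_k>0$. Then $I-A(z)/z=-(I+A(0)/z)$ is non-invertible exactly at $z=-\lambda_k$, so all characteristic values lie on the negative axis and accumulate at $0$, while $n([r,1])$ grows without bound. This violates both Corollary~\ref{a17c} and Corollary~\ref{a22} $ii)$. The only ingredient is that $A'(0)=2I$ is not compact; no Fredholm-theoretic side conditions need separate verification because $I-A(z)/z$ remains of the form ``minus identity plus compact''.
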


\begin{proof}
$i)$ If $A (z) = A (0) + A^{\prime} (0) z$ where $A(0)$ is a selfadjoint, compact operator with $\ker A (0)=\{0\}$ and $A^\prime(0) = 2 I$, then the assumptions of Proposition \ref{c21}, Theorem \ref{a17} and Theorem \ref{a17b}, except the compactness of $A (z)$, hold true. In this case, the characteristic values are well defined and given by $( - \lambda_k )_{k \geq 1}$ where $( \lambda_k )_{k \geq 1}$ are the eigenvalues of $A(0)$. Thus, the counting function of the eigenvalues of $A (0)$ and the counting function of the characteristic values of $I - \frac{A (z)}{z}$ are very different.

$ii)$ On ${\mathcal H}= \ell^2(\N)$, let us consider the infinite block
diagonal matrices
\begin{equation*}
A(0)= \diag ( B_0, \ldots, B_k,  \ldots ) \qquad \text{and} \qquad A^{\prime}(0) = \diag ( B^{\prime}_0 , \ldots, B^{\prime}_k , \ldots ) ,
\end{equation*}
where $B_k , B^{\prime}_k$, $k \in \N$, are the following $2 \times 2$ matrices
\begin{equation*}
B_k = \left( \begin{array}{cc} 0 & 0 \\ \alpha_k & 0 \end{array} \right) \qquad \text{and} \qquad B^{\prime}_k = \left( \begin{array}{cc} 0 & \alpha_k \\ 0 & 0 \end{array} \right) ,
\end{equation*}
with $( \alpha_k )_{k \in \N}$ a sequence of real numbers which tends to $0$. Then, the characteristic values of $I- \frac{A(0)}{z} - A^{\prime}(0)$ are the complex numbers $z$ for which one of the matrices
\begin{equation*}
I - \frac{B_{k}}{z} - B^{\prime}_{k} = \left( \begin{array}{cc} 1 & - \alpha_k \\ - \frac{\alpha_k}{z} & 1 \end{array}\right) ,
\end{equation*}
is not invertible. Thus, these characteristic values are the real numbers $\alpha_k^2$, $k \in \N$, while the spectrum of $A(0)$ is reduced to $\{ 0 \}$ since $A (0)$ is nilpotent. Note that $I - A^\prime (0) \Pi_0 = \diag ( I - B^{\prime}_{0} , \ldots , I - B^{\prime}_k , \ldots )$ is invertible. So, we have an example where all the assumptions of Proposition \ref{c21}, Theorem \ref{a17} and Theorem \ref{a17b}, except the selfadjointness of $A (0)$, are met, but  the conclusions of the main theorems of Section \ref{s2} do not hold true.

$iii)$ On $\cH = \C \oplus \ell^{2} ( \N )$, let us consider the  affine function
\begin{equation} \label{end5}
A (z) = A (0) + A^{\prime} (0) z
\end{equation}
 with
\begin{equation} \label{end6}
A (0) = \left( \begin{array}{cc}
0 & 0 \\
0 & \diag ( - \lambda_{k} )
\end{array} \right) \qquad \text{and} \qquad A^{\prime} (0) = \left( \begin{array}{cc}
1 & \alpha \\
{}^{t} \alpha & 0
\end{array} \right) ,
\end{equation}
where $\alpha = ( \alpha_{0} , \ldots , \alpha_{k} , \ldots ) \in \ell^{2} ( \N )$ and $( \lambda_{k} )_{k \in \N}$ goes to $0$. Let
\begin{equation*}
f_{n} (z) = \sum_{k=0}^{n} \alpha_{k}^{2} \Big( \frac{\lambda_{k}}{z} + 1 \Big)^{-1} \qquad \text{and} \qquad f_{\infty} (z) = \sum_{k=0}^{+ \infty} \alpha_{k}^{2} \Big( \frac{\lambda_{k}}{z} + 1 \Big)^{-1} .
\end{equation*}
We construct inductively  two sequences $( \lambda_{k} )_{k \in \N}$ and $( \alpha_{k} )_{k \in \N}$  for which we have
\begin{equation*}
(H)_{n} : \left\{ \begin{aligned}
&0 < \lambda_{n} < \cdots < \lambda_{0} , \\
&\forall 0 \leq k \leq n \quad 0 < \vert \alpha_{k} \vert \leq 2^{- k} \text{ and } \alpha_{k}^{2} \in \R ,  \\
&\forall 0 \leq k \leq n \quad (- 1)^{k} f_{n} (\lambda_{k} ) \geq \frac{\vert \alpha_{k} \vert^{2}}{4} \Big( 1 + \frac{1}{n+1} \Big) ,
\end{aligned} \right.
\end{equation*}
 for all $n \in \N$. One can verify that $(H)_{0}$ holds with $\lambda_{0} = \alpha_{0} = 1$.

\begin{lemma}\sl \label{t2}
If $(H)_{n}$ holds, there exist $\lambda_{n+1}$ and $\alpha_{n+1}$ such that $(H)_{n+1}$ holds.
\end{lemma}

\begin{proof}[Proof of Lemma \ref{t2}]
We choose
\begin{equation*}
\alpha_{n+1} = i^{n+1} \min \Big( 2^{- n -1} , \min_{0 \leq k \leq n} \frac{\vert \alpha_{k} \vert}{2} \Big( \frac{1}{n+1} - \frac{1}{n+2} \Big)^{\frac{1}{2}} \Big) .
\end{equation*}
In particular, $0 < \vert \alpha_{n+1} \vert \leq 2^{- n - 1}$, $\alpha_{n+1}^{2} \in \R$ and
\begin{align*}
(-1)^{k} f_{n+1} ( \lambda_{k} ) &= (-1)^{k} f_{n} ( \lambda_{k} ) + (-1)^{k} \alpha_{n+1}^{2} \Big( \frac{\lambda_{n+1}}{\lambda_{k}} + 1 \Big)^{-1}   \\
&\geq \frac{\vert \alpha_{k} \vert^{2}}{4} \Big( 1 + \frac{1}{n+1} \Big) - \frac{\vert \alpha_{k} \vert^{2}}{4} \Big( \frac{1}{n+1} - \frac{1}{n+2} \Big) \\
&= \frac{\vert \alpha_{k} \vert^{2}}{4} \Big( 1 + \frac{1}{(n+1) + 1} \Big) ,
\end{align*}
for $0 \leq k \leq n$. It remains to evaluate
\begin{align*}
(-1)^{n+1} f_{n+1} ( \lambda_{n+1} ) &= (-1)^{n+1} f_{n} ( \lambda_{n+1} ) + \vert \alpha_{n+1} \vert^{2}
\Big( \frac{\lambda_{n+1}}{\lambda_{n+1}} + 1 \Big)^{-1}  \\
&= \frac{\vert \alpha_{n+1} \vert^{2}}{2} + (-1)^{n+1} f_{n} ( \lambda_{n+1} ) .
\end{align*}
Note that $f_{n} (x) \longrightarrow 0$ as $x \searrow 0$. Then, one can choose
$0 < \lambda_{n+1} < \lambda_{n}$ small enough such that $\vert f_{n} ( \lambda_{n+1} )
\vert \leq \frac{\vert \alpha_{n+1} \vert^{2}}{4}$. So, the previous equation implies
\begin{equation*}
(-1)^{n+1} f_{n+1} ( \lambda_{n+1} ) \geq \frac{\vert \alpha_{n+1} \vert^{2}}{4} ,
\end{equation*}
and $(H)_{n+1}$ holds.
\end{proof}

 Using Lemma \ref{t2}, we can  construct $( \lambda_{k} )_{k \in \N}$ and $( \alpha_{k} )_{k \in \N}$
such that $(H)_{n}$ holds for all $n \in \N$. With such a choice,  the operators $A (0)$ and $A'(0)$ in \eqref{end5}--\eqref{end6} are compact,
 and $A (0)$ is selfadjoint.  However, the operator
\begin{equation*}
I - A^{\prime} (0) \Pi_{0} = \left( \begin{array}{cc}
0 & 0 \\
- {}^{t} \alpha & 1
\end{array} \right) ,
\end{equation*}
is not invertible. On the other hand, $f_{\infty} (z)$ is a well
defined holomorphic function for $z \in \C \setminus  \{ 0 \} \cup
\{ - \lambda_{k} ; \ k \in \N \}$. Moreover, $f_{n}
\longrightarrow f_{\infty}$ uniformly on the compact subset of $\C
\setminus  \{ 0 \} \cup \{ - \lambda_{k} ; \ k \in \N \}$. In
particular, this implies that
\begin{equation} \label{t4}
(- 1)^{k} f_{\infty} (\lambda_{k} ) \geq \frac{\vert \alpha_{k} \vert^{2}}{4} > 0 ,
\end{equation}
for all $k \in \N$.

\begin{lemma}\sl \label{t3}
For $z \in \C \setminus \{ 0 \} \cup \{ - \lambda_{k} ; \ k \in \N \}$, we have
\begin{equation*}
I - \frac{A (z)}{z} \text{ is not invertible} \quad \Longleftrightarrow \quad f_{\infty} (z) = 0 .
\end{equation*}
\end{lemma}

\begin{proof}[Proof of Lemma \ref{t3}]
This result is a direct consequence of the invertibility of $D = 1 + \frac{\diag ( \lambda_{k} )}{z}$ and the identity
\begin{align*}
\left( \begin{array}{cc}
1 & \alpha D^{-1} \\
0 & D^{-1}
\end{array} \right)
\Big( I - \frac{A (z)}{z} \Big) &= \left( \begin{array}{cc}
1 & \alpha D^{-1} \\
0 & D^{-1}
\end{array} \right)
\left( \begin{array}{cc}
0 & - \alpha \\
- {}^{t} \alpha & D
\end{array} \right)   \\
&= \left( \begin{array}{cc}
- \alpha D^{-1} {}^{t} \alpha & 0 \\
- D^{-1} {}^{t} \alpha& 1
\end{array} \right) = \left( \begin{array}{cc}
- f_{\infty} (z) & 0 \\
- D^{-1} {}^{t} \alpha& 1
\end{array} \right) .
\end{align*}
\end{proof}

Combining the previous lemma with \eqref{t4},  we find that $I - \frac{A
(z)}{z}$ is invertible on the $\lambda_{k}$, and then the
assumptions of Proposition \ref{c21} hold. So, the characteristics
values in $\C \setminus \{ 0 \} \cup \{ - \lambda_{k} ; \ k \in \N
\}$ are well defined and coincide with the  zeroes of $f_{\infty}$.
 On the other hand, it follows from $(H)_{n}$ and \eqref{t4}  that $f_{\infty} (x)$ is a
continuous real-valued function on $] 0 , + \infty [$ which
changes its sign between $\lambda_{k+1}$ and $\lambda_{k}$. Then
the intermediate value theorem implies that, for all $k \in \N$,
there exists a characteristic value $x_{k}$ with $0 <
\lambda_{k+1} < x_{k} < \lambda_{k}$. At the same time, the
eigenvalues of $A (0)$ are the $- \lambda_{k}$ which are all
negative.

Summing up, we have constructed an example where
all the assumptions of Proposition \ref{c21}, Theorem \ref{a17} and
Theorem \ref{a17b}, except the invertibility of $I -
A^{\prime} (0) \Pi_{0}$,  are met, but  the conclusions of the main theorems of
Section \ref{s2} do not hold true.
\end{proof}

{\sl Acknowledgments.} J.-F. Bony and V. Bruneau
were partially supported by ANR-08-BLAN-0228. V. Bruneau and G.
Raikov were partially supported by the Chilean Science Foundation
{\em Fondecyt} under Grant 1090467, and {\em N\'ucleo Cient\'ifico
ICM} P07-027-F ``{\em Mathematical Theory of Quantum and Classical
Magnetic Systems''}.

All the three authors thank the Bernoulli
Center, EPFL, Lausanne, for a partial support during the program
``{\it Spectral and dynamical properties of quantum
Hamiltonians}'', January - July 2010.

\bibliographystyle{amsplain}
\providecommand{\bysame}{\leavevmode\hbox to3em{\hrulefill}\thinspace}
\providecommand{\MR}{\relax\ifhmode\unskip\space\fi MR }
\providecommand{\MRhref}[2]{%
  \href{http://www.ams.org/mathscinet-getitem?mr=#1}{#2}
}
\providecommand{\href}[2]{#2}

\end{document}